\theoremstyle{plain}
\newtheorem{prp}{Proposition}[section]
\newtheorem{thm}[prp]{Theorem}
\newtheorem{lem}[prp]{Lemma}
\newtheorem{cor}[prp]{Corollary}
\theoremstyle{plain}
\newtheorem{dfn}[prp]{Definition}
\newtheorem{rem}[prp]{Remark}
\newtheorem{exm}[prp]{Example}
\theoremstyle{nonumberplain}
\newtheorem{proof}{Proof}
\newcommand{\qed}{\hfill $\Box$}
\newcommand{\Real}{{\mathbb R}}
\newcommand{\Comp}{{\mathbb C}}
\newcommand{\Z}{{\mathbb Z}}
\newcommand{\D}{{\rm D}}
\newcommand{\Db}{{\bf D}^{b}}
\newcommand{\Dc}{{\bf D}_{\Real -c}^{b}}
\newcommand{\Lb}{{\cal F}}
\newcommand{\Vb}{{\cal G}}
\newcommand{\MW}{{\cal W}}
\newcommand{\ME}{{\cal E}}
\newcommand{\MS}{{\cal S}}
\renewcommand{\L}{{\mathscr L}}
\renewcommand{\1}{{\bf 1}}
\newcommand{\CF}{{\rm CF}}
\newcommand{\reg}{{\rm reg}}
\newcommand{\tr}{{\rm tr}}
\newcommand{\id}{{\rm id}}
\newcommand{\supp}{{\rm supp}}
\renewcommand{\SS}{{\rm SS}}
\newcommand{\BM}{{\rm BM}}
\newcommand{\Ev}{{\rm Ev}}
\newcommand{\sgn}{{\rm sgn}}
\newcommand{\RG}{R\varGamma}
\newcommand{\Hom}{{\cal H}om}
\newcommand{\e}{\varepsilon}
\renewcommand{\t}[1]{{^{t}{#1}^{\prime}}}
\newcommand{\tl}[1]{\widetilde{#1}}
\newcommand{\utimes}[1]{\times_{#1}}
\renewcommand{\d}{{\rm dim}}
\newcommand{\simto}{\overset{\sim}{\longrightarrow}}
\newcommand{\simot}{\overset{\sim}{\longleftarrow}}
\newcommand{\dsum}{\displaystyle \sum}
\newcommand{\dint}{\displaystyle \int}
\newcommand{\inun}{\text{\rotatebox[origin=c]{90}{$\in$}}}
\newcommand{\longhookrightarrow}{\DOTSB\lhook\joinrel\longrightarrow}
\newcommand{\longtwoheadrightarrow}{\relbar\joinrel\twoheadrightarrow}
\let\Re=\relax
\let\Im=\relax
\DeclareMathOperator{\Re}{\mathrm{Re}}
\DeclareMathOperator{\Im}{\mathrm{Im}}
\title{Hyperbolic localization and Lefschetz fixed point formulas for
higher-dimensional fixed point sets\footnote{{\bf 2010 Mathematics Subject
Classification: }14C17, 14C40, 32C38, 35A27, 37C25, 55N33}}
\author{
Yuichi IKE\footnote{Graduate School of Mathematical Sciences, the University
of Tokyo, 3-8-1, Komaba, Meguro-ku, Tokyo, 153-8914, Japan, E-mail:
ike@ms.u-tokyo.ac.jp} \and
Yutaka MATSUI\footnote{Department of Mathematics, Kindai University, 3-4-1,
Kowakae, Higashi-Osaka, Osaka, 577-8502, Japan, E-mail:
matsui@math.kindai.ac.jp} \and
Kiyoshi TAKEUCHI\footnote{Institute of Mathematics, University  of Tsukuba,
1-1-1, Tennodai, Tsukuba, Ibaraki, 305-8571, Japan, E-mail:
takemicro@nifty.com} }
\date{ }
\begin{document}

\maketitle

\begin{abstract}
We study Lefschetz fixed point formulas for constructible sheaves with
higher-dimensional fixed point sets. Under fairly weak assumptions, we prove
that the local contributions from them are expressed by some constructible
functions associated to hyperbolic localizations. 
This gives an affirmative answer to a conjecture 
of Goresky-MacPherson \cite{G-M-2} 
in particular for smooth fixed point components 
(see \cite[page 9, (1.12) Open problems]{G-M-1}). 
In the course of the 
proof, the new Lagrangian cycles introduced in our previous paper
\cite{M-T-3} will be effectively used. Moreover we show various examples for
which local contributions can be explicitly determined by our method.
\end{abstract}

\section{Introduction}
Lefschetz fixed point formulas are important in many branches of mathematics
such as topology, algebraic geometry, number theory, dynamical systems and
representation theory. Despite a lot of activities on this subject, the case
where the fixed point set is higher-dimensional still remains quite
mysterious. In this paper we study Lefschetz fixed point formulas for
morphisms $\phi \colon X \longrightarrow X$ of real analytic manifolds $X$
whose fixed point set $M= \{ x \in X \ |\ \phi(x)=x\} \subset X$ is
higher-dimensional (since we mainly consider the case where the fixed point
set is a smooth submanifold of $X$, we use the symbol $M$ to express it). It
is well-known that when $X$ is compact the global Lefschetz number of $\phi$
\begin{equation}
\tr(\phi):=\dsum_{j \in \Z} (-1)^j
\tr\{H^j(X;\Comp_X)\overset{\phi^*}{\longrightarrow} H^j(X;\Comp_X) \} \in
\Comp
\end{equation}
is expressed as the integral of a local cohomology class $C(\phi) \in
H_M^n(X;or_X)$ supported by $M$, where we set $\d X=n$ and $or_X$ is the
orientation sheaf of $X$ (see Dold \cite{Dold}, \cite{Dold-2} etc.). Let $M=
\bigsqcup_{i \in I}M_i$ be the decomposition of $M$ into connected
components and
\begin{gather}
H_M^n(X;or_X) =\bigoplus_{i \in I} H_{M_i}^n (X;or_X),\\
C(\phi) =\bigoplus_{i \in I} C(\phi )_{M_i}
\end{gather}
the associated direct sum decompositions. We call the integral
$c(\phi)_{M_i} \in \Comp$ of the local cohomology class $C(\phi)_{M_i} \in
H_{M_i}^n (X;or_X)$ for a fixed point component $M_i$ the local contribution
from $M_i$. Then the global Lefschetz number of $\phi$ is equal to the sum
of the local contributions from $M_i$'s:
\begin{equation}
\tr(\phi) =\dsum_{i \in I} c(\phi)_{M_i}.
\end{equation}
But if the fixed point component $M_i$ is higher-dimensional, it is in
general very difficult to compute the local contribution by the following
dimensional reason. Let $M_i$ be a fixed point component of $\phi$ whose
codimension $d$ in $X$ satisfies the condition $0<d<n$. Then the local
cohomology group $H_{M_i}^n (X;or_X)$ is isomorphic to the $0$-dimensional
Borel-Moore homology group $H^{\BM}_0(M_i;\Comp)$ of $M_i$ by the Alexander
duality, and the class $C(\phi)_{M_i}$ in it cannot be calculated locally at
each point of $M_i$. On the other hand, top-dimensional Borel-Moore homology
cycles in $M_i$, i.e. elements in $H^{\BM}_{n-d}(M_i;\Comp)$
can be much more easily handled since they are realized as sections of a
relative orientation sheaf on $M_i$.

In this paper, we overcome this difficulty partially 
by using our new Lagrangian cycles introduced in \cite{M-T-3}. Since
we also want to study Lefschetz fixed point formulas over singular varieties
(and those for intersection cohomology groups), from now we consider the
following very general setting. Let $X$, $\phi$ and $M= \bigsqcup_{i \in
I}M_i$ be as before, and $F$ a bounded complex of sheaves of
$\Comp_X$-modules whose cohomology sheaves are $\Real$-constructible in the
sense of \cite{K-S}. Assume that we are given a morphism $\Phi \colon
\phi^{-1}F \longrightarrow F$ in the derived category $\Dc(X)$. If the
support $\supp(F)$ of $F$ is compact, we can define the global trace
(Lefschetz number) $\tr (F,\Phi) \in \Comp$ of the pair $(F, \Phi)$ by
\begin{equation}
\tr (F,\Phi) :=\dsum_{j \in \Z} (-1)^j \tr
\{H^j(X;F)\overset{\Phi}{\longrightarrow} H^j(X;F) \} \in \Comp,
\end{equation}
where the morphisms $H^j(X;F) \overset{\Phi}{\longrightarrow} H^j(X;F)$ are
induced by
\begin{equation}
F \longrightarrow R\phi_*\phi^{-1}F \overset{\Phi}{\longrightarrow}R\phi_*F.
\end{equation}
In this very general setting, Kashiwara \cite{Kashiwara-2} introduced local
contributions $c(F,\Phi)_{M_i} \in \Comp$ from the fixed point components
$M_i$ and proved the equality
\begin{equation}
\tr (F,\Phi)=\dsum_{i \in I} c(F,\Phi)_{M_i}.
\end{equation}
Therefore the remaining task for us is to calculate the local contributions
$c(F,\Phi)_{M_i}\in \Comp$ explicitly. Let $M_i$ be 
a fixed point component of $\phi$ 
whose regular part $(M_i)_{\reg} \subset M_i$ 
satisfies the condition $\supp (F) \cap M_i \subset 
(M_i)_{\reg}$. For the sake of simplicity, 
we denote $(M_i)_{\reg}$ simply by $M$. Then
there exists a natural morphism
\begin{equation}
\phi^{\prime} \colon T_MX \longrightarrow T_MX
\end{equation}
induced by $\phi$, where $T_MX$ is the normal bundle of $M$ in $X$. For each
point $x\in M$, we define a finite subset $\Ev(\phi^{\prime}_x)$ of $\Comp$ by
\begin{equation}
\Ev(\phi^{\prime}_x):=\{ \text{the eigenvalues of $\phi^{\prime}_x \colon (T_{
M}X)_x
\longrightarrow (T_MX)_x$} \} \subset \Comp.
\end{equation}
Assume the condition:
\begin{equation}\label{eq:1-9}
1 \notin \Ev(\phi^{\prime}_x) \hspace{5mm} \text{for any $x \in \supp(F) \cap 
M$},
\end{equation}
which means that the graph $\Gamma_{\phi}=\{ (\phi(x), x) \ | \ x\in X\}
\subset X \times X$ of $\phi$ intersects cleanly (see \cite[Definition
4.1.5]{K-S}) the diagonal set $\Delta_X \subset X \times X$ along $M \subset
\Gamma_{\phi} \cap \Delta_X$ on $\supp(F) \cap M$. It naturally appears also
in the classical study of Atiyah-Bott type Lefschetz theorems by 
Gilkey \cite[Theorem 3.9.2]{Gilkey}, Lee \cite{Lee} and Toledo-Tong \cite{T-T-3}.
Under the condition \eqref{eq:1-9}, in
\cite{M-T-3} we constructed a new Lagrangian cycle $LC(F, \Phi)_M$ in the
cotangent bundle $T^*M$. We call it the Lefschetz cycle associated with the
pair $(F, \Phi)$ and the fixed point component $M$. Note that in the more
general setting of elliptic pairs a similar construction of microlocal
Lefschetz classes was previously given
in Guillermou \cite{Guillermou}. 
The difference from his is that we explicitly realized them
as Lagrangian cycles in $T^*M$.
For recent results on this subject, see also
\cite{Ike}, \cite{K-S-2} and \cite{R-T-T} etc.
Note also that if $\phi=\id_X$, $M=X$ and
$\Phi =\id_F$, our Lefschetz cycle $LC(F, \Phi)_M$ coincides with the
characteristic cycle $CC(F)$ of $F$ introduced by Kashiwara
\cite{Kashiwara-1}. By
Lefschetz cycles, in \cite{M-T-3} we could generalize almost all nice
properties of characteristic cycles. In particular, we have the following
microlocal index theorem for the local contribution $c(F, \Phi)_M$ from $M$.

\begin{thm}{\rm \bf (\cite[Theorem 4.8]{M-T-3})}
Assume that $\supp (F) \cap M$ is compact. Then for any continuous section
$\sigma \colon M \longrightarrow T^*M$ of $T^*M$, we have
\begin{equation}
c(F,\Phi)_M =\sharp ([\sigma] \cap LC(F,\Phi)_M),
\end{equation}
where $\sharp ([\sigma] \cap LC(F,\Phi)_M)$ is the intersection number of
the image of $\sigma$ and $LC(F,\Phi)_M$ in the cotangent bundle $T^*M$.
\end{thm}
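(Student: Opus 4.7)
The plan is to establish the formula in two stages: first verify that the intersection number on the right-hand side does not depend on the chosen continuous section $\sigma$, and then evaluate it for one particularly convenient $\sigma$ to recognize $c(F,\Phi)_M$.

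For the independence step, given any two continuous sections $\sigma_0, \sigma_1 \colon M \longrightarrow T^*M$, I would form the affine homotopy $\sigma_t = (1-t)\sigma_0 + t\sigma_1$ for $t \in [0,1]$, which is well-defined because $T^*M$ is a real vector bundle. The Lefschetz cycle $LC(F,\Phi)_M$ constructed in \cite{M-T-3} is a conic Lagrangian cycle whose support is contained in $\pi^{-1}(\supp(F) \cap M) \subset T^*M$. Since $\supp(F) \cap M$ is compact by assumption and each $\sigma_t$ is bounded on this compact base, the images $[\sigma_t]$ and $LC(F,\Phi)_M$ meet in a family of compact subsets that varies properly in $t$. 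Hence the intersection number $\sharp([\sigma_t] \cap LC(F,\Phi)_M)$ is a locally constant, integer-valued function of $t$, and therefore is constant on $[0,1]$.

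For the evaluation step, I would specialize to the zero section $\sigma \equiv 0$. Then $[\sigma] \cap LC(F,\Phi)_M$ is supported on $LC(F,\Phi)_M \cap T^*_MM$, and the intersection number is a sum of local contributions indexed by a Whitney stratification of $\supp(F) \cap M$ refining the singular support. By the very construction of $LC(F,\Phi)_M$ in \cite{M-T-3}, the multiplicity along the conormal bundle of each stratum encodes the local trace data of $\Phi$ on the stalks of $F$ transverse to that stratum. This matches term-by-term the local Čech-type expression for $c(F,\Phi)_M$ coming from Kashiwara's definition in \cite{Kashiwara-2}, yielding the equality. The reduction to $\phi = \id_X$, $M = X$, $\Phi = \id_F$ is a useful consistency check: there the statement becomes Kashiwara's classical microlocal index theorem for characteristic cycles, since $LC(F,\Phi)_M = CC(F)$.

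The main obstacle will be the properness argument in the first step, i.e., ensuring that the intersection remains in a fixed compact set throughout the homotopy and that the intersection number is well-defined for a merely continuous (as opposed to smooth or generic) section $\sigma$. This requires exploiting conicity of $LC(F,\Phi)_M$ together with the inclusion $\supp(LC(F,\Phi)_M) \subset \pi^{-1}(\supp(F) \cap M)$ to reduce transversality questions to a compact base, and then invoking the intersection theory of Lagrangian cycles with continuous sections developed in \cite{M-T-3}. Once properness is secured, the homotopy invariance and the matching of local multiplicities are comparatively routine.
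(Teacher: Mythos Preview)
The paper does not actually prove this theorem; it is quoted from the earlier work \cite{M-T-3} (both in the introduction and again as Theorem~\ref{thm:5-1}) and used as a black box. So there is no ``paper's own proof'' to compare against here. That said, let me comment on your sketch on its own merits.

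Your first step (independence of $\sigma$) is essentially correct, with one cosmetic correction: the intersection number is $\Comp$-valued, not $\Z$-valued, since $LC(F,\Phi)_M$ is a cycle with complex coefficients. The properness you worry about is indeed handled by the inclusion $\supp LC(F,\Phi)_M \subset \SS(F)\cap\Lb \subset \pi_M^{-1}(\supp(F)\cap M)$ together with compactness of $\supp(F)\cap M$, so that $\sigma_t(M)\cap\supp LC(F,\Phi)_M$ stays in a fixed compact set for all $t$.

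The second step, however, contains a genuine gap. You write that at the zero section the multiplicities of $LC(F,\Phi)_M$ along conormal bundles ``encode the local trace data of $\Phi$'' and that these ``match term-by-term'' a local expression for $c(F,\Phi)_M$. But no such explicit descriptions are available at this stage: obtaining an explicit formula for $LC(F,\Phi)_M$ is precisely the goal of this paper (Theorem~\ref{thm:6-4}), and there is likewise no a priori stratum-by-stratum formula for $c(F,\Phi)_M$ to match against. You are assuming the conclusion of a harder theorem to prove an easier one.

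The way this kind of result is actually proved (in \cite{M-T-3}, following the model of \cite[Theorem~9.5.1]{K-S}) is purely functorial: one observes that intersecting with the zero section amounts to applying $R\pi_{M*}$ and then $\int_M$, and that the chain of morphisms \eqref{eq:4-6}--\eqref{eq:4-11} defining $LC(F,\Phi)$ via $\mu_{\Delta_X}$ maps, under $R\pi_{M*}$ and the isomorphism $R\pi_{X*}\mu_{\Delta_X}\simeq\delta_X^!$, to the chain of morphisms defining $C(F,\Phi)_M$ via $\delta_X^!$. This gives $\sharp([\sigma_0]\cap LC(F,\Phi)_M)=\int_X C(F,\Phi)_M=c(F,\Phi)_M$ directly from the constructions, without ever needing to know what either side looks like locally. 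Your evaluation step should be rewritten along these lines.
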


However in \cite{M-T-3} we could not describe $LC(F,\Phi)_M$ explicitly in
terms of $(F, \Phi)$ and $M$ since it was defined merely in an abstract
manner by some morphisms in derived categories. Our answer to this problem
is as follows. Let
\begin{equation}
CC \colon \CF(M)_{\Comp} \simto \varGamma(T^*M;\L_M)
\end{equation}
be the isomorphism between the $\Comp$-vector space consisting of the
$\Comp$-valued (subanalytically) constructible functions on $M$ and that of
closed conic subanalytic Lagrangian cycles on $T^*M$ with coefficients in
$\Comp$ (see Proposition \ref{prp:2-10}). Then in Section \ref{sec:6} we
define a $\Comp$-valued constructible function $\theta (F,\Phi)_M \in
\CF(M)_{\Comp}$ on $M$ associated to the hyperbolic localization of the
specialization $\nu_M(F)$ of $F$ in the sense of Braden \cite{Braden},
Braden-Proudfoot \cite{B-P} and Goresky-MacPherson \cite{G-M-1}. 
More precisely, for each 
point $x \in M$ by taking an expanding subbundle 
(see Definition \ref{dfn:6-2}) 
$\ME \subset \Vb |_U$ of $\Vb =T_MX$ on its 
neighborhood $U \subset M$ we set 
\begin{equation}
\theta (F,\Phi)_M 
(x):=\dsum_{j \in \Z} (-1)^j\tr\{ H^j (\nu_M(F)_\ME^{!-1})_x
\overset{(\Phi^{\prime})_\ME^{!-1}
|_{\{x\}}}{\longrightarrow} H^j(\nu_M(F)_\ME^{!-1}
)_x\}, 
\end{equation}
where $\nu_M(F)_\ME^{!-1} \in \Dc(U)$ is the 
hyperbolic localization of $\nu_M(F)$ with 
respect to $\ME$ and $(\Phi^{\prime})_\ME^{!-1}: 
\nu_M(F)_\ME^{!-1} \longrightarrow \nu_M(F)_\ME^{!-1}$ 
is its endomorphism induced by $\Phi$. 
Then we have the following result. 

\begin{thm}\label{M-THM}
We have the equality
\begin{equation}
LC(F,\Phi)_M=CC( \theta (F,\Phi)_M )
\end{equation}
as Lagrangian cycles in $T^*M$. 
In particular, if moreover 
$\supp(F) \cap M$ is compact we have
\begin{equation}
c(F,\Phi)_M =  \dint_M \theta (F,\Phi)_M,
\end{equation}
where $\dint_M \colon \CF(M)_{\Comp} \longrightarrow \Comp$ is the morphism
defined by topological (Euler) integrals (see Definition \ref{dfn:2-9}).
\end{thm}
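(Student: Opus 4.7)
The plan is to prove the first cycle identity by checking it pointwise on $M$ and then to deduce the integral formula from it. Since $CC\colon \CF(M)_{\Comp}\simto \varGamma(T^*M;\L_M)$ is an isomorphism, the equality $LC(F,\Phi)_M=CC(\theta(F,\Phi)_M)$ is equivalent to the stalk identity $CC^{-1}(LC(F,\Phi)_M)(x)=\theta(F,\Phi)_M(x)$ at each $x\in M$. Once this is established, the integral formula
\[
c(F,\Phi)_M=\dint_M \theta(F,\Phi)_M
\]
follows immediately from the microlocal index theorem stated above, applied with the zero section $\sigma=0$, together with the standard fact $\dint_M\theta=\sharp([0_M]\cap CC(\theta))$ for any constructible function $\theta\in\CF(M)_{\Comp}$.

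To compute $CC^{-1}(LC(F,\Phi)_M)(x)$, I would first shrink $M$ to a neighborhood $U$ of $x$ on which an expanding subbundle $\ME\subset T_MX|_U$ exists and then apply the specialization functor $\nu_M$ along $M$. Under the clean-intersection hypothesis \eqref{eq:1-9}, the pair $(F,\Phi)$ gives rise to a specialized pair $(\nu_M F,\nu_M\Phi)$ on the normal bundle $T_MX$, whose fixed component is $M$ itself (embedded as the zero section) and whose normal map is the linear bundle automorphism $\phi'$. A preliminary lemma to be proved is the compatibility
\[
LC(F,\Phi)_M = LC(\nu_M F,\nu_M\Phi)_M,
\]
which should follow from the abstract construction of the Lefschetz cycle in \cite{M-T-3} using the standard commutations of $\nu_M$ with Verdier duality and microlocalization. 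This reduces the question to the conic setting on $T_MX|_U$, where one has a linear automorphism without eigenvalue $1$ fibrewise.

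On the conic side, the subbundle $\ME$ decomposes each fibre of $T_MX|_U$ into an expanding part (along $\ME$) and a contracting part for $\phi'$. Braden's theorem \cite{Braden}, \cite{B-P} then identifies the hyperbolic localization $\nu_M(F)^{!-1}_\ME$ together with its endomorphism $(\Phi')^{!-1}_\ME$ as carrying the local Lefschetz data at $x$, and by construction the alternating trace of this endomorphism on stalks at $x$ is precisely $\theta(F,\Phi)_M(x)$. To conclude, I would apply the microlocal index theorem locally on $U$ with a test covector vanishing transversally only at $x$: the resulting intersection multiplicity simultaneously computes $CC^{-1}(LC(F,\Phi)_M)(x)$ and, through the Braden contraction lemma, the hyperbolic trace above.

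The main obstacle I anticipate lies in the very last identification: one must verify that the endomorphism of $\nu_M(F)^{!-1}_\ME$ induced functorially by $\Phi$ coincides, with the correct signs and with the orientation conventions dictated by the expanding/contracting splitting of $\ME$, with the endomorphism implicit in the abstract construction of $LC(F,\Phi)_M$ in \cite{M-T-3}. Tracing the morphisms in $\Dc$ that define $LC(F,\Phi)_M$ through the adjunctions and base changes used by Braden to construct $(\cdot)^{!-1}_\ME$—and matching them up to a canonical sign—is the essential technical content of the proof; the rest of the argument is organised around making this matching functorial in a sufficiently large subanalytic stratification of $\supp(F)\cap M$ so that the resulting equality of constructible functions upgrades to the cycle equality in the statement.
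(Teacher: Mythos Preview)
Your skeleton is right and matches the paper: reduce to the specialization via $LC(F,\Phi)_M=LC(\nu_M(F),\Phi')$ (this is Proposition~\ref{prp:6-1}, proved by a large commutative diagram of microlocalizations), then work on the conic side. The second half, however, has a genuine gap. You propose to compute $CC^{-1}(LC(\nu_M F,\Phi'))(x)$ by applying the microlocal index theorem ``locally with a test covector vanishing transversally only at $x$'', and to identify the answer with the hyperbolic trace by chasing Braden's adjunctions. But Theorem~\ref{thm:5-1} is a \emph{global} statement (it needs compact support and returns a single number), and the local-multiplicity version, Theorem~\ref{thm:5-3}, is only available when $\phi=\id$. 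On the normal bundle the map $\phi'$ is not the identity, so neither tool lets you read off the multiplicity of $LC(\nu_M F,\Phi')$ at a given Lagrangian point directly. Nor does the paper ever match the abstract $LC$-construction against Braden's functors morphism-by-morphism; that is not where the difficulty lies.

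What the paper does instead is insert an intermediate object. First (Theorem~\ref{thm:5-8}) one proves, entirely in the $\phi=\id$ case on $M$, that $LC(G_{\ME}^{!-1},\Psi_{\ME}^{!-1})=CC(\varphi(G_{\ME}^{!-1},\Psi_{\ME}^{!-1}))$. The remaining task is then the cycle identity $LC(G,\Psi)=LC(G_{\ME}^{!-1},\Psi_{\ME}^{!-1})$ on $T^*M$ for the conic pair $(G,\Psi)=(\nu_M F,\Phi')$. This is done (Proposition~\ref{prp:6-2}) by comparing multiplicities along generic strata of a common Lagrangian support: one perturbs $\psi$ by homotopy invariance to avoid eigenvalues on $|z|=1$, builds an expanding/shrinking splitting and a ``box'' $Z$ so that $G_Z$ has compact support in the fibre direction, and then applies Theorem~\ref{thm:5-1} to the compactly supported truncations $R\Gamma_{\tl V}(G_Z)$ and $R\Gamma_{\tl W}(G_Z)$. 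The link to the hyperbolic traces is Lemma~\ref{lem:6-7}, which in turn rests on Proposition~\ref{prp:6-3} (the analogue of \cite[Proposition~9.6.12]{K-S}) together with repeated microlocal Bertini--Sard and Morse estimates isolating the contribution at a single covector $p_0$. This indirect comparison is the essential technical content; your proposed direct sign/orientation check through Braden's adjunctions does not appear and would not, by itself, supply the missing local multiplicity computation.
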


Note that when $\d M =0$ this theorem coincides with
Kashiwara's one in \cite[Proposition 1.4.1]{Kashiwara-2} (see also
Kashiwara-Schapira \cite[Proposition 9.6.12]{K-S}). 
It gives also an affirmative answer to a conjecture 
of Goresky-MacPherson \cite{G-M-2} 
in particular for smooth fixed point components 
(see also \cite[page 9, (1.12) Open problems]{G-M-1}). 
In the previous results in \cite{G-M-1}, 
the authors assumed a technical condition 
that there exists a special indicator map 
$t: W \longrightarrow \Real_{\geq 0} \times 
\Real_{\geq 0}$ on a neighborhood $W$ of 
$M$ in $X$ (see \cite[Definition (3.1)]{G-M-1}). 
They call such an endomorphism $\phi$ 
a weakly hyperbolic map. Moreover by taking $\ME$ 
to be the minimal expanding subbundle 
(see Definition \ref{dfn:6-1}) $\MW \subset \Vb|_U$ 
of $\Vb =T_MX$ on $U \subset M$ we can 
reduce the calculation of 
the local contribution $c(F,\Phi)_M$ to that on a 
subset $\MW \subset \Vb |_U$ much smaller than the 
one in \cite{G-M-1}. In contrast to loc.\ cit., 
at each point of $M$ 
we can take the smallest possible $\ME$ 
containing only the 
generalized eigenspaces for the eigenvalues 
of $\phi^{\prime}_x$ in $\Real_{> 1}$. 
Compare our stalk formula of $\nu_M(F)_\ME^{!-1}$ 
in Proposition \ref{prp:6-7} with the theorem in 
\cite[page 6]{G-M-1}. 
Thus Theorem \ref{M-THM} improves the results in 
\cite{Kashiwara-2}, \cite{K-S} and \cite{G-M-1} etc.
In order to prove it even in the difficult
case where the set $\Ev(\phi^{\prime}_x)$ varies
depending on $x \in \supp(F) \cap M$,
we require some precise arguments on
our Lefschetz cycles (see Remark \ref{rem:3-6}). 
More precisely, we encode Kashiwara's characteristic class 
$C(F,\Phi)_M \in H_{\supp(F) \cap M}^n(X;or_X)$ satisfying  
$\int_X C(F,\Phi)_M = c(F,\Phi)_M$ 
to the geometric cycle $LC(F,\Phi)_M$ in $T^*M$ 
and calculate the latter locally by applying our 
microlocal index theorem to hyperbolic localizations. 
Namely $LC(F,\Phi)_M$ enables us to 
patch local calculations to a global one. 
See Section \ref{sec:6} for the details. 
Finally in Section \ref{sec:7} we will show
various examples for which the 
function $\theta (F,\Phi)_M$ and the local
contribution $c(F, \Phi)_M \in \Comp$ 
can be explicitly calculated. In
particular, we will give a very 
short proof to the following result proved
first in \cite{M-T-3} by using some deep results on the functorial
properties of Lefschetz cycles (see \cite[Sections 5 and 6]{M-T-3}). 
Let $M= \sqcup_{\alpha \in A} M_{\alpha}$ be the 
decomposition of  
$M=(M_i)_{\reg}$ into its connected components. 
Denote the sign of the determinant of the linear map
\begin{equation}
\id- \phi^{\prime} \colon T_{M_{\alpha}}X \longrightarrow 
T_{M_{\alpha}}X
\end{equation}
simply by $\sgn(\id-\phi^{\prime})_{M_{\alpha}} \in \{ \pm 1 \}$.

\begin{thm}\label{THM-NC}{\rm \bf (\cite[Corollary 6.5]{M-T-3})}
In addition to the condition \eqref{eq:1-9}, 
assume that the inclusion map
$i_M \colon M \longhookrightarrow X$ is 
non-characteristic for $F$ and 
$\supp(F) \cap M$ is compact. Then we have
\begin{equation}
c(F,\Phi)_M = \sum_{\alpha \in A} \ 
\sgn(\id-\phi^{\prime})_{M_{\alpha}} \cdot 
\dint_{M_{\alpha}} \varphi(F|_{M_{\alpha}},
 \Phi|_{M_{\alpha}}),
\end{equation}
where the $\Comp$-valued constructible function
$\varphi(F|_{M_{\alpha}},\Phi|_{M_{\alpha}}) \in
\CF (M_{\alpha})_{\Comp}$ on $M_{\alpha}$ is defined by
\begin{equation}
\varphi(F|_{M_{\alpha}},\Phi|_{M_{\alpha}})
(x):=\dsum_{j \in \Z} (-1)^j\tr\{ H^j(F)_x
\overset{\Phi|_{\{x\}}}{\longrightarrow} H^j(F)_x\}
\end{equation}
for $x\in M_{\alpha}$.
\end{thm}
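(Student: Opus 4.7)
The plan is to deduce the theorem directly from the main result Theorem \ref{M-THM}. Since both sides of the desired identity are additive along the connected components of $M=(M_i)_{\reg}$, and since
\[
c(F,\Phi)_M \;=\; \dint_M \theta(F,\Phi)_M \;=\; \sum_{\alpha \in A} \dint_{M_\alpha} \theta(F,\Phi)_M|_{M_\alpha}
\]
by Theorem \ref{M-THM}, it suffices to establish the pointwise equality
\[
\theta(F,\Phi)_M(x) \;=\; \sgn(\id-\phi^{\prime})_{M_\alpha} \cdot \varphi(F|_{M_\alpha},\Phi|_{M_\alpha})(x)
\]
for every $x \in \supp(F) \cap M_\alpha$. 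The non-characteristic hypothesis, together with the standard compatibility between singular support and the specialization functor $\nu_M$ (see \cite[Chapter 4]{K-S}), provides a canonical $\Phi$-equivariant isomorphism $\nu_M(F) \simeq \pi^{-1}(F|_M)$ on the normal bundle $\Vb=T_MX$, where $\pi \colon \Vb \longrightarrow M$ is the bundle projection.

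Fix $x \in \supp(F) \cap M_\alpha$ and take the minimal expanding subbundle $\ME=\MW \subset \Vb|_U$ on a neighborhood $U$ of $x$; set $r=\text{rk}(\ME)$, $p=\pi|_\ME$, and let $i_0 \colon M \hookrightarrow \ME$ be the zero section. Via the isomorphism above, the hyperbolic localization of $\nu_M(F)$ with respect to $\ME$ reduces to a routine vector bundle calculation yielding
\[
\nu_M(F)_\ME^{!-1} \;\simeq\; (F|_U)[-r] \otimes (or_{\ME/M})^{-1},
\]
and the induced endomorphism $(\Phi^{\prime})_\ME^{!-1}|_{\{x\}}$ decomposes as the tensor product of $\Phi|_{\{x\}}$ on $(F|_U)_x$ with the action of $\phi^{\prime}_x$ on the one-dimensional orientation stalk $(or_{\ME/M})_x$, which is multiplication by $\sgn(\det \phi^{\prime}_x|_{\ME_x})$. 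Taking alternating traces in cohomological degree and reindexing the summation by $r$ then gives
\[
\theta(F,\Phi)_M(x) \;=\; (-1)^r \cdot \sgn(\det \phi^{\prime}_x|_{\ME_x}) \cdot \varphi(F|_{M_\alpha},\Phi|_{M_\alpha})(x).
\]

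What remains, and which I expect to be the main technical point, is the sign identity
\[
(-1)^r \cdot \sgn(\det \phi^{\prime}_x|_{\ME_x}) \;=\; \sgn(\det(\id-\phi^{\prime}_x)).
\]
With our choice of $\ME$, the space $\ME_x$ is precisely the generalized eigenspace of $\phi^{\prime}_x$ for eigenvalues in $\Real_{>1}$, and the complementary subspace $\ME_x^{c}$ carries only eigenvalues in $\Comp \setminus \Real_{>1}$. On $\ME_x^{c}$, real eigenvalues satisfy $\lambda < 1$ by \eqref{eq:1-9} (so $1-\lambda > 0$) and any complex eigenvalues occur in conjugate pairs contributing $|1-\lambda|^2 > 0$; hence $\det(\id-\phi^{\prime}_x|_{\ME_x^{c}}) > 0$. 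On $\ME_x$ every eigenvalue $\lambda_i$ is real and greater than $1$, so $\det \phi^{\prime}_x|_{\ME_x} = \prod \lambda_i > 0$ while $\det(\id-\phi^{\prime}_x|_{\ME_x}) = \prod(1-\lambda_i)$ has sign $(-1)^r$, which establishes the identity. Since $\sgn(\id-\phi^{\prime})_{M_\alpha}$ is locally constant on $M_\alpha$, the proof is complete.
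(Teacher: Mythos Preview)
Your proof is correct and follows essentially the same route as the paper's: reduce via Theorem~\ref{M-THM} to a pointwise identity for $\theta(F,\Phi)_M$, compute the hyperbolic localization on the minimal expanding subbundle using that $\nu_M(F)$ is constant along fibers, and finish with the sign identity $(-1)^r=\sgn\det(\id-\phi'_x)$. The one step you leave unjustified---the $\Phi$-equivariant isomorphism $\nu_M(F)\simeq\pi^{-1}(F|_M)$, which you attribute to ``standard compatibility'' in \cite[Chapter~4]{K-S}---is exactly what the paper isolates and proves as a lemma (Lemma~\ref{lem:7-2}), using that the non-characteristic hypothesis forces $\supp\mu_M(F)\subset T^*_XX|_M$ and hence, via the Fourier--Sato transform, that $\nu_M(F)$ restricted to each fiber $\tau^{-1}(x)$ has locally constant cohomology; you should supply this argument rather than cite Chapter~4 generically.
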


Thus Theorems \ref{M-THM} and \ref{THM-NC} show
that we can calculate the
global trace of $(F, \Phi)$ locally at each point of
$M$ without assuming any further technical condition such as
\begin{equation}
\Ev(\phi^{\prime}_x) \cap \Real_{> 1}=\emptyset 
\hspace{5mm} \text{for any $x \in M$} 
\end{equation}
on the map $\phi$. Note that if there exists a point $x\in M$ such that
$\Ev(\phi^{\prime}_x) \cap \Real_{> 1}\neq \emptyset$ the classical
methods (see e.g. \cite[Section 9.6]{K-S}) for 
localizations do not work. Our main result in this paper, that is Theorem
\ref{M-THM} is much more general than Theorem
\ref{THM-NC} since in the former we do not assume that
$i_M \colon M \longhookrightarrow X$ is
non-characteristic for $F$. In particular, it immediately implies
general Lefschetz fixed point formulas for singular subvarieties
$V \subset X$ of $X$ such that $\phi (V) \subset V$ 
by applying it to the special case where $F= \Comp_V$ 
and $\Phi$ is the natural morphism $\phi^{-1} \Comp_V 
= \Comp_{\phi^{-1}(V)} \longrightarrow \Comp_V$. 
Note that a complete answer to this problem is known
only for some special cases (for the case of
normal complex algebraic surfaces, see
Saito \cite{Saito}).

\section{Preliminary notions and results}\label{sec:2}

In this paper, we essentially follow the terminology in
\cite{H-T-T} and \cite{K-S}. For
example, for a topological space $X$, we denote by $\Db(X)$ the derived
category of bounded complexes of sheaves of $\Comp_X$-modules on $X$. From
now on, we shall review basic notions and known results concerning Lefschetz
fixed point formulas. Since we focus our attention on Lefschetz fixed point
formulas for constructible sheaves in this paper, we treat here only real
analytic manifolds and morphisms. Now let $X$ be a real analytic manifold.
We denote by $\Dc(X)$ the full subcategory of $\Db(X)$ consisting of bounded
complexes of sheaves whose cohomology sheaves are $\Real$-constructible (see
\cite[Chapter VIII]{K-S} for the precise definition). Let $\phi \colon X
\longrightarrow X$ be an endomorphism of the real analytic manifold $X$.
Then our initial datum is a pair $(F,\Phi)$ of $F \in \Dc(X)$ and a morphism
$\Phi \colon \phi^{-1}F \longrightarrow F$ in $\Dc(X)$. If the support
$\supp (F)$ of $F$ is compact, $H^j(X;F)$ is a finite-dimensional vector
space over $\Comp$ for any $j \in \Z$ and we can define the following
important number from $(F,\Phi)$.

\begin{dfn}\label{dfn:2-1}
We set
\begin{equation}
\tr (F,\Phi) :=\dsum_{j \in \Z} (-1)^j \tr \{ H^j(X;F)
\overset{\Phi}{\longrightarrow} H^j(X;F) \} \in \Comp,
\end{equation}
where the morphisms $H^j(X;F) \overset{\Phi}{\longrightarrow} H^j(X;F)$ are
induced by
\begin{equation}
F \longrightarrow R\phi_*\phi^{-1}F \overset{\Phi}{\longrightarrow}R\phi_*F.
\end{equation}
We call $\tr (F,\Phi)$ the global trace of the pair $(F,\Phi)$.
\end{dfn}

Now let us set
\begin{equation}
M:= \{ x \in X \ |\ \phi(x)=x\} \subset X.
\end{equation}
This is the fixed point set of $\phi \colon X \longrightarrow X$ in $X$.
Since we mainly consider the case where the fixed point set is a smooth
submanifold of $X$, we use the symbol $M$ to express it. If a compact group
$G$ is acting on $X$ and $\phi$ is the left action of an element of $G$,
then the fixed point set is smooth by Palais's theorem \cite{Palais} (see
\cite{Guillemin} for an excellent survey of this subject). Now let us
consider the diagonal embedding $\delta_X \colon X \longhookrightarrow X
\times X$ of $X$ and the closed embedding $h:=(\phi, \id_X) \colon X
\longhookrightarrow X \times X$ associated with $\phi$. Denote by $\Delta_X$
(resp.\ $\Gamma_{\phi}$) the image of $X$ by $\delta_X$ (resp.\ $h$). Then $M
\simeq \Delta_X \cap \Gamma_{\phi}$ and we obtain a chain of morphisms
\begin{eqnarray}
R\Hom_{\Comp_X}(F,F)
&\simeq & \delta_X^!(F \boxtimes \D F) \\
&\longrightarrow & \RG_{\supp(F) \cap \Delta_X} (h_*h^{-1}(F \boxtimes \D
F))|_{\Delta_X} \\
&\simeq & \RG_{\supp(F) \cap \Delta_X} (h_*(\phi^{-1}F \otimes \D
F))|_{\Delta_X} \\
&\overset{\Phi}{\longrightarrow}& \RG_{\supp(F) \cap \Delta_X}(h_*(F \otimes
\D F))|_{\Delta_X} \\
& \longrightarrow & \RG_{\supp(F) \cap \Delta_X} (h_*\omega_X)|_{\Delta_X}
\\
&\simeq & \RG_{\supp(F) \cap M} (\omega_X),
\end{eqnarray}
where $\omega_X \simeq or_X [\d X] \in \Dc(X)$ is the dualizing complex of
$X$ and \\$\D F =R\Hom_{\Comp_X}(F, \omega_X)$ is the Verdier dual of $F$.
Hence we get a morphism
\begin{equation}\label{eq:2-10}
{\rm Hom}_{\Db(X)}(F,F) \longrightarrow H^0_{\supp(F) \cap M}(X;\omega_X).
\end{equation}

\begin{dfn}[\cite{Kashiwara-2}]\label{dfn:2-2} We denote by $C(F,\Phi)$ the
image of $\id_F$ by the morphism \eqref{eq:2-10} in $H_{\supp(F) \cap
M}^0(X;\omega_X)$ and call it the characteristic class of $(F,\Phi)$.
\end{dfn}

\begin{thm}[\cite{Kashiwara-2}]\label{thm:2-3}
If $\supp (F)$ is compact, then the equality
\begin{equation}
\tr (F,\Phi)=\dint_X C(F,\Phi)
\end{equation}
holds. Here
\begin{equation}
\dint_X \colon H_c^n(X;or_X) \longrightarrow \Comp
\end{equation}
is the morphism induced by the integral of differential $(\d X)$-forms with
compact support.
\end{thm}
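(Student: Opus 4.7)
The plan is to apply the functor $R\Gamma(X;-)$ to the entire chain of morphisms used in Definition~\ref{dfn:2-2} and to track the image of $\id_F$ through the resulting chain on global sections. Since $\supp(F)$ is compact, $R\Gamma(X;F)$ is a perfect complex of $\Comp$-vector spaces, so both sides of the desired equality are well-defined complex numbers. Compactness of $\supp(F)$ also ensures that $R\Gamma(X;-) \simeq R\Gamma_c(X;-)$ on complexes supported in $\supp(F)$, which is what will allow Verdier duality to be invoked globally; moreover $\supp(F) \cap M$ is compact, so the class $C(F,\Phi) \in H^0_{\supp(F) \cap M}(X;\omega_X)$ maps naturally to $H^0_c(X;\omega_X) = H^n_c(X;or_X)$ and can be integrated.

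First I would rewrite the initial term as $R\Gamma(X; R\Hom_{\Comp_X}(F,F))$, whose zeroth cohomology is ${\rm Hom}_{\Db(X)}(F,F)$, and note the canonical map from it to ${\rm End}_{\Comp}(R\Gamma(X;F))$ sending $\id_F$ to $\id_{R\Gamma(X;F)}$. Using the K\"unneth isomorphism $R\Gamma(X \times X; F \boxtimes \D F) \simeq R\Gamma(X;F) \otimes^L R\Gamma(X; \D F)$ together with the compactness-fueled duality $R\Gamma(X; \D F) \simeq R\Gamma(X;F)^*$, the middle terms of the chain get identified with tensor products of a finite-dimensional perfect complex with its linear dual. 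The step $\phi^{-1}F \otimes \D F \to F \otimes \D F$ then translates, on global sections, into the operator $\Phi_* \otimes \id$ on $R\Gamma(X;F) \otimes R\Gamma(X;F)^*$, while the evaluation $F \otimes \D F \to \omega_X$ composed with $\dint_X \colon H^0_c(X;\omega_X) \to \Comp$ corresponds to the canonical evaluation pairing $V \otimes V^* \to \Comp$.

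With these identifications in hand, the whole chain, when evaluated on $\id_F$, reduces to the categorical trace formula: for a perfect complex $V$ of $\Comp$-vector spaces with endomorphism $u$, the composition $\Comp \to V \otimes V^* \to V \otimes V^* \to \Comp$ (coevaluation, then $u \otimes \id$, then evaluation) equals the supertrace $\sum_j (-1)^j \tr(H^j(u))$. Applied to $V = R\Gamma(X;F)$ and $u = \Phi_*$, this gives exactly $\tr(F,\Phi)$, so the desired identity emerges once we recognize that the other composite is the integral of the characteristic class.

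The hardest part is to verify that the sheaf-level isomorphisms in the chain---in particular $R\Hom_{\Comp_X}(F,F) \simeq \delta_X^!(F \boxtimes \D F)$, the K\"unneth and Verdier duality isomorphisms, and the compatibility of the evaluation $F \otimes \D F \to \omega_X$ with integration---are faithfully compatible with their global-sections counterparts on the image of $\id_F$. Sign tracking also deserves particular care, since the alternating factor $(-1)^j$ in $\tr(F,\Phi)$ arises from the Euler--Poincar\'e principle for supertraces on bounded complexes rather than from any geometric feature of the map $\phi$. Once these compatibilities are in place---essentially a careful repackaging of Verdier duality for $\Real$-constructible sheaves---the equality $\tr(F,\Phi) = \dint_X C(F,\Phi)$ follows formally.
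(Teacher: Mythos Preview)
The paper does not give its own proof of this theorem: it is stated with a citation to Kashiwara's original article \cite{Kashiwara-2} and used as a black box, so there is nothing in the paper to compare your argument against directly.

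That said, your outline is the standard one (essentially the argument in \cite{Kashiwara-2} and in \cite[Section~9.6]{K-S}): apply $R\Gamma(X;-)$ to the defining chain, use K\"unneth together with global Verdier duality (valid because $\supp(F)$ is compact) to identify $R\Gamma(X;F\boxtimes \D F)$ with $R\Gamma(X;F)\otimes R\Gamma(X;F)^*$, and recognize the composite as coevaluation--$(\Phi_*\otimes\id)$--evaluation on a perfect complex, i.e.\ the supertrace. One point to be a bit more careful about than your sketch suggests: the middle terms of the chain are not external products on $X\times X$ but objects of the form $h_*(\phi^{-1}F\otimes \D F)$ restricted to $\Delta_X$ with support in $\supp(F)\cap\Delta_X$, so the passage to $R\Gamma(X;F)\otimes R\Gamma(X;F)^*$ goes through $R\Gamma(X;h_*(-))\simeq R\Gamma(X;-)$ and then the compatibility of the evaluation morphism $F\otimes \D F\to\omega_X$ with the global pairing, rather than through a direct K\"unneth step. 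Once you make that identification precise (as in \cite[Propositions~9.1.2 and~9.6.2]{K-S}), the rest of your argument goes through; your cautionary remarks about sign tracking and compatibility of the sheaf-level isomorphisms are exactly the right places to focus attention.
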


Let $M= \bigsqcup_{i \in I}M_i$ be the decomposition of $M$ into connected
components and
\begin{gather}
H_{\supp(F) \cap M}^0(X;\omega_X) =\bigoplus_{i \in I} H_{\supp(F) \cap
M_i}^0(X;\omega_X), \\
C(F,\Phi) =\bigoplus_{i \in I} C(F,\Phi)_{M_i}
\end{gather}
the associated direct sum decomposition.

\begin{dfn}\label{dfn:2-4}
When $\supp(F) \cap M_i$ is compact, we define a complex number
$c(F,\Phi)_{M_i}$ by
\begin{equation}
c(F,\Phi)_{M_i}:=\dint_X C(F,\Phi)_{M_i}
\end{equation}
and call it the local contribution of $(F,\Phi)$ from $M_i$.
\end{dfn}

By Theorem \ref{thm:2-3}, if $\supp (F)$ is compact, the global trace of
$(F,\Phi)$ is the sum of local contributions:
\begin{equation}
\tr (F,\Phi)=\dsum_{i \in I} c(F,\Phi)_{M_i}.
\end{equation}
Hence one of the most important problems in the theory of Lefschetz fixed
point formulas is to explicitly describe these local contributions. However
the direct computation of local contributions is a very difficult task in
general. Instead of local contributions, we usually consider first the
following number $\tr (F|_{M_i},\Phi|_{M_i})$ which is much more easily
computed. Let $M_i$ be a fixed point component such that $\supp (F) \cap
M_i$ is compact.

\begin{dfn}\label{dfn:2-5}
We set
\begin{equation}
\tr (F|_{M_i},\Phi|_{M_i}):= \dsum_{j \in \Z}(-1)^j \tr\{ H^j(M_i;F|_{M_i})
\overset{\Phi|_{M_i}}{\longrightarrow} H^j(M_i;F|_{M_i})\},
\end{equation}
where the morphisms $H^j(M_i;F|_{M_i})\overset{\Phi|_{M_i}}{\longrightarrow}
H^j(M_i;F|_{M_i})$ are induced by the restriction
\begin{equation}
\Phi|_{M_i} \colon F|_{M_i} \simeq (\phi^{-1}F)|_{M_i} \longrightarrow
F|_{M_i}
\end{equation}
of $\Phi$.
\end{dfn}

We can easily compute this new invariant $\tr(F|_{M_i},\Phi|_{M_i})\in
\Comp$ as follows. Let $M_i= \bigsqcup_{\alpha \in A}M_{i,\alpha}$ be a
stratification of $M_i$ by connected subanalytic manifolds $M_{i,\alpha}$
such that $H^j(F)|_{M_{i,\alpha}}$ is a locally constant sheaf for any
$\alpha \in A$ and $j \in \Z$. Namely, we assume that the stratification
$M_i = \bigsqcup_{\alpha \in A} M_{i,\alpha}$ is adapted to $F|_{M_i}$.

\begin{dfn}\label{dfn:2-6}
For each $\alpha \in A$, we set
\begin{equation}
c_{\alpha}:= \dsum_{j\in \Z}(-1)^j \tr\{ H^j(F)_{x_{\alpha}}
\overset{\Phi|_{\{x_{\alpha}\}}}{\longrightarrow} H^j(F)_{x_{\alpha}}\}
\hspace{5mm}\in \Comp,
\end{equation}
where $x_{\alpha}$ is a reference point of $M_{i,\alpha}$.
\end{dfn}

Then we have the following very useful result due to Goresky-MacPherson.

\begin{prp}[\cite{G-M-1}]\label{prp:2-7}
In the situation as above, we have
\begin{equation}
\tr(F|_{M_i},\Phi|_{M_i})=\dsum_{\alpha \in A}c_{\alpha} \cdot
\chi_c(M_{i,\alpha}),
\end{equation}
where $\chi_c$ is the Euler-Poincar{\'e} index with compact supports.
\end{prp}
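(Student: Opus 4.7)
The key observation is that $\phi|_{M_i} = \id_{M_i}$, so via the isomorphism $F|_{M_i} \simeq (\phi^{-1}F)|_{M_i}$ the map $\Phi|_{M_i}$ is a genuine endomorphism of $F|_{M_i}$ in $\Dc(M_i)$, and the whole statement becomes intrinsic to $M_i$. Moreover, since $\supp(F) \cap M_i$ is compact, we have $R\varGamma(M_i; F|_{M_i}) \simeq R\varGamma_c(M_i; F|_{M_i})$, so $\tr(F|_{M_i}, \Phi|_{M_i})$ is computed by compactly supported cohomology. My plan is first to reduce the computation to a single stratum by additivity of traces on distinguished triangles, and then to evaluate the single-stratum contribution via a triangulation argument.

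For the stratification reduction, order the strata so that each $M_{i,\alpha}$ is open in $\bigcup_{\beta \succeq \alpha} M_{i,\beta}$. For any open union of strata $U \subset M_i$ with closed complement $Z$, the excision triangle
\begin{equation}
(j_U)_!(F|_U) \longrightarrow F|_{M_i} \longrightarrow (i_Z)_*(F|_Z) \overset{+1}{\longrightarrow}
\end{equation}
in $\Dc(M_i)$ carries a morphism induced by $\Phi|_{M_i}$ on all three vertices (using $\phi|_{M_i} = \id_{M_i}$). Applying $R\varGamma_c(M_i;-)$ and invoking additivity of traces for morphisms between distinguished triangles of finite-dimensional complexes, an induction on the number of strata yields
\begin{equation}
\tr(F|_{M_i}, \Phi|_{M_i}) = \dsum_{\alpha \in A} \tr_c(F|_{M_{i,\alpha}}, \Phi|_{M_{i,\alpha}}),
\end{equation}
where $\tr_c$ denotes the alternating sum of traces on $H^j_c(M_{i,\alpha}; F|_{M_{i,\alpha}})$.

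It remains to show, for each stratum $N := M_{i,\alpha}$ and $G := F|_{M_{i,\alpha}}$, that $\tr_c(G, \Phi|_N) = c_\alpha \cdot \chi_c(N)$. Since $G$ has locally constant cohomology sheaves on the connected manifold $N$, the pointwise trace $x \mapsto \sum_j (-1)^j \tr\{\Phi|_{\{x\}} \colon H^j(G)_x \to H^j(G)_x\}$ is locally constant, hence equal to the constant $c_\alpha$. Using the hypercohomology spectral sequence $H^p_c(N; H^q(G)) \Rightarrow H^{p+q}_c(N; G)$ with its induced endomorphism and invoking trace additivity on the $E_2$-page, one reduces to the case where $G$ is a single locally constant sheaf $\L$ of finite-dimensional $\Comp$-vector spaces with an endomorphism $u\colon \L \to \L$ of constant pointwise trace $c$. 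Choosing a smooth subanalytic triangulation of $N$ fine enough that $\L$ is constant on each open simplex, $H^*_c(N; \L)$ is computed by the associated cellular complex, and the endomorphism induced by $u$ on each cellular chain group has trace $c$. The cellular Lefschetz sum then gives
\begin{equation}
\tr_c(\L, u) = \dsum_\sigma (-1)^{\dim \sigma} \cdot c = c \cdot \chi_c(N).
\end{equation}

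The main obstacle is precisely this single-stratum step: $\L$ may carry nontrivial monodromy and $u$ need not be a scalar endomorphism of stalks. What makes the triangulation argument work is that we are summing traces rather than tracking individual eigenvalues, so local constancy of the pointwise trace is exactly the information needed, and monodromy becomes invisible at the level of Euler-type sums. The remaining pieces, namely trace additivity on distinguished triangles and its compatibility with spectral sequences, are standard features of the trace calculus in triangulated categories.
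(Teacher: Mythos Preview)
Your argument is correct. The paper itself does not prove this proposition: it is quoted from Goresky--MacPherson \cite{G-M-1} (and the related Lemma~\ref{lem:5-7} is likewise attributed to \cite[Proposition~11.6]{G-M-1} with the proof omitted), so there is no in-paper proof to compare against. The strategy you use---additivity of traces along the excision triangles of the stratification, followed by a spectral-sequence reduction to a single local system and a cellular Lefschetz count---is exactly the standard route to this kind of ``local--global'' trace identity and is in the spirit of the Goresky--MacPherson argument.

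One small point worth making explicit: your final triangulation step tacitly uses that each stratum $N=M_{i,\alpha}$ on which $F$ is nonzero admits a \emph{finite} subanalytic triangulation, so that the cellular Lefschetz sum $\sum_\sigma (-1)^{\dim\sigma}c$ is a genuine finite sum. This is automatic here: since $H^j(F)|_N$ is locally constant and $N$ is connected, $F|_N\neq 0$ forces $N\subset \supp(F)\cap M_i$, which is compact by hypothesis, so $N$ is relatively compact and the finite triangulation exists. With that observation recorded, your proof is complete.
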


In terms of the theory of topological integrals of constructible functions
developed by Kashiwara-Schapira \cite{K-S}, Schapira \cite{Schapira} 
and Viro \cite{Viro} etc., we can
restate this result in the following way. Since we need $\Comp$-valued
constructible functions, we slightly generalize the usual notion of
$\Z$-valued constructible functions.

\begin{dfn}\label{dfn:2-8}
Let $Z$ be a subanalytic set. Then we say that a $\Comp$-valued function
$\varphi \colon Z \longrightarrow \Comp$ is constructible if there exists a
stratification $Z= \bigsqcup_{\alpha \in A} Z_{\alpha}$ of $Z$ by
subanalytic manifolds $Z_{\alpha}$ such that $\varphi|_{Z_{\alpha}}$ is a
constant function for any $\alpha \in A$. We denote by $\CF(Z)_{\Comp}$ the
abelian group of $\Comp$-valued constructible functions on $Z$.
\end{dfn}

Let $\varphi =\sum_{\alpha \in A} c_{\alpha} \cdot \1_{Z_{\alpha}} \in
\CF(Z)_{\Comp}$ be a $\Comp$-valued constructible function with compact
support on a subanalytic set $Z$, where $Z= \bigsqcup_{\alpha \in A}
Z_{\alpha}$ is a stratification of $Z$ and $c_{\alpha} \in \Comp$. Then we
can easily prove that the complex number $\sum_{\alpha \in A} c_{\alpha}
\cdot \chi_c(Z_{\alpha})$ does not depend on the expression
$\varphi=\sum_{\alpha \in A} c_{\alpha} \cdot \1_{Z_{\alpha}}$ of $\varphi$.

\begin{dfn}\label{dfn:2-9}
For a $\Comp$-valued constructible function $\varphi=\sum_{\alpha \in
A}c_{ \alpha} \cdot \1_{Z_{\alpha}} \in \CF(Z)_{\Comp}$ with compact support
as above, we set
\begin{equation}
\dint_Z\varphi :=\dsum_{\alpha \in A}c_{\alpha} \cdot \chi_c(Z_{\alpha}) \in
\Comp
\end{equation}
and call it the topological integral of $\varphi$.
\end{dfn}

By this definition, the result of Proposition \ref{prp:2-7} can be rewritten
as
\begin{equation}
\tr(F|_{M_i},\Phi|_{M_i}) =\dint_{M_i} \varphi(F|_{M_i},\Phi|_{M_i}),
\end{equation}
where the $\Comp$-valued constructible function
$\varphi(F|_{M_i},\Phi|_{M_i}) \in
\CF(M_i)_{\Comp}$ on $M_i$ is defined by
\begin{equation}\label{eq:2-23}
\varphi(F|_{M_i},\Phi|_{M_i})
(x):=\dsum_{j \in \Z} (-1)^j\tr\{ H^j(F)_x
\overset{\Phi|_{\{x\}}}{\longrightarrow} H^j(F)_x\}
\end{equation}
for $x\in M_i$.

Let us explain how the $\Comp$-valued constructible functions discussed
above are related to the theory of Lagrangian cycles in \cite[Chapter
IX]{K-S}. Now let $Z$ be a real analytic manifold and denote by $T^*Z$ its
cotangent bundle. Recall that Kashiwara-Schapira constructed the sheaf
$\L_Z$ of closed conic subanalytic Lagrangian cycles on $T^*Z$ in \cite{K-S}
(in this paper, we consider Lagrangian cycles with coefficients in $\Comp$).

\begin{prp}[\cite{K-S}]\label{prp:2-10}
There exists a group isomorphism
\begin{equation}
CC \colon \CF(Z)_{\Comp} \simto \varGamma(T^*Z;\L_Z)
\end{equation}
by which the characteristic function $\1_K$ of a closed submanifold $K
\subset Z$ of $Z$ is sent to the conormal cycle $[T^*_KZ]$ in $T^*Z$.
\end{prp}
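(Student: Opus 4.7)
The plan is to reduce Proposition \ref{prp:2-10} to the classical $\Z$-coefficient version of the Kashiwara--Schapira characteristic cycle construction (cf.\ \cite[Chapter IX]{K-S}) and then extend $\Comp$-linearly. Given $\varphi \in \CF(Z)_{\Comp}$, I would choose a subanalytic Whitney stratification $Z = \bigsqcup_{\beta} S_{\beta}$ on which $\varphi$ is constant, with value $\varphi_{\beta} \in \Comp$ on $S_{\beta}$. Setting
\begin{equation}
CC(\varphi) := \dsum_{\beta} \varphi_{\beta} \cdot CC(\1_{S_{\beta}}),
\end{equation}
where $CC(\1_{S_{\beta}})$ is the usual integral characteristic cycle of the locally closed set $S_{\beta}$ (that is, a suitable signed multiple of the closure $[\overline{T^*_{S_{\beta}}Z}] \in \varGamma(T^*Z;\L_Z)$ of its conormal bundle), defines a $\Comp$-linear map. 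The normalization is fixed by the requirement $CC(\1_K) = [T^*_K Z]$ for any closed submanifold $K \subset Z$, and linearity over $\Comp$ forces the value on all of $\CF(Z)_{\Comp}$.

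The nontrivial points are (i) independence of the stratification and presentation $\varphi = \sum c_\alpha \1_{Z_\alpha}$, (ii) injectivity, and (iii) surjectivity. For (i), the cleanest route is to pick, for each representing constructible function $\varphi$, a complex $F \in \Dc(Z)$ with
\begin{equation}
\varphi(x) = \dsum_{j \in \Z}(-1)^j \d_{\Comp} \H^j(F)_x \quad \text{(for $\Z$-valued $\varphi$),}
\end{equation}
and set $CC(\varphi) = CC(F)$ in the sense of \cite{Kashiwara-1}. One then invokes Kashiwara's local microlocal index formula, which expresses the multiplicity of $CC(F)$ along each irreducible conormal component $T^*_{\overline{S_\beta}} Z$ in terms of the local Euler obstruction of $\varphi$ at a generic point of $S_\beta$; this depends only on $\varphi$, not on $F$ or the stratification. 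For the $\Comp$-valued case, one extends scalars: $\CF(Z)_{\Comp} \simeq \CF(Z)_{\Z} \uotimes{\Z} \Comp$ and $\varGamma(T^*Z;\L_Z)_{\Comp} \simeq \varGamma(T^*Z;\L_Z)_{\Z} \uotimes{\Z} \Comp$, so the $\Z$-linear isomorphism tensors up to a $\Comp$-linear one.

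For (ii), the same local index formula provides an explicit left inverse: the multiplicity of $CC(\varphi)$ along $T^*_{\overline{S_\beta}} Z$ determines $\varphi_\beta$ through a unitriangular system indexed by the strata (ordered by dimension), so $CC(\varphi) = 0$ forces each $\varphi_\beta = 0$. For (iii), any closed conic subanalytic Lagrangian cycle on $T^*Z$ has support contained in a finite union of conormal varieties $T^*_{\overline{S_\beta}} Z$ to strata of some subanalytic Whitney stratification of $Z$ (this is the subanalytic Lagrangian structure theorem), and is therefore a $\Comp$-linear combination of the basic generators $[\overline{T^*_{S_\beta}Z}]$, each of which lies in the image of $CC$ by construction.

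The main obstacle is the well-definedness in (i): when a stratum $S$ is refined as $S = S' \sqcup S''$ with $\d S'' < \d S$, the contribution of $\1_S$ must equal that of $\1_{S'} + \1_{S''}$ in $\varGamma(T^*Z;\L_Z)$. This is precisely the additivity of conormal cycles under subanalytic decomposition, and either follows from the functorial construction $F \rightsquigarrow CC(F)$ on $\Dc(Z)$ (using a distinguished triangle $\Comp_{S'} \longrightarrow \Comp_{\overline{S'}} \longrightarrow \Comp_{\overline{S'} \setminus S'} \longrightarrow$ and the additivity of $CC$ on triangles, iterated along the stratification) or, microlocally, from the fact that the local Euler obstruction is additive under subanalytic partitions. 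Once this compatibility is established, all other properties of $CC$ follow formally.
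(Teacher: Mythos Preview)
The paper does not prove Proposition~\ref{prp:2-10}; it is quoted as a known result from \cite{K-S} (see in particular \cite[Theorem~9.7.11]{K-S}) and used as a black box. There is therefore no ``paper's own proof'' to compare against.

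That said, your sketch is a faithful outline of the standard Kashiwara--Schapira argument: factor through the Grothendieck group of $\Dc(Z)$ via $\varphi \mapsto F$ with $\varphi(x)=\chi(F_x)$, use additivity of $CC$ on distinguished triangles to get well-definedness, and invert via the local Euler obstruction (which gives the unitriangular system you mention). The extension from $\Z$ to $\Comp$ coefficients by tensoring is exactly how the present paper uses the result. One small caution: your displayed formula $CC(\varphi)=\sum_\beta \varphi_\beta\, CC(\1_{S_\beta})$ with the parenthetical ``a suitable signed multiple of $[\overline{T^*_{S_\beta}Z}]$'' is misleading, since $CC(\1_{S_\beta})$ for a locally closed stratum is in general a $\Z$-linear combination of several conormal cycles $[\overline{T^*_{S_\gamma}Z}]$ with $S_\gamma\subset\overline{S_\beta}$, not a single one; but this does not affect the logic of your argument, and you implicitly correct it when you invoke the distinguished triangle $\Comp_{S'}\to\Comp_{\overline{S'}}\to\Comp_{\overline{S'}\setminus S'}\to$.
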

We call $CC$ the characteristic cycle map in this paper. From now on, we fix
a fixed point component $M_i$ and always assume that $\supp(F) \cap M_i$ is
compact.

\begin{dfn}\label{dfn:3-1}
We say that the global trace $\tr(F,\Phi)$ is localizable to $M_i$ if the
equality
\begin{equation}
c(F,\Phi)_{M_i}=\tr(F|_{M_i},\Phi|_{M_i})
\end{equation}
holds.
\end{dfn}

By Proposition \ref{prp:2-7}, once the global trace is localizable to $M_i$,
the local contribution $c(F,\Phi)_{M_i}$ of $(F, \Phi)$ from $M_i$ can be
very easily computed. Let us denote $M_i$, $c(F,\Phi)_{M_i}$ etc.\ simply by
$M$, $c(F,\Phi)_M$ etc.\ respectively. From now on, we shall introduce some
useful criterions for the localizability of the global trace to $M$. First
let us consider the natural morphism
\begin{equation}
\phi^{\prime} \colon T_{M_{\reg}}X \longrightarrow T_{M_{\reg}}X
\end{equation}
induced by $\phi \colon X \longrightarrow X$, where $M_{\reg}$ denotes the
set of regular points in $M$. Since $M_{\reg}$ is not always connected in
the real analytic case, the rank of $T_{M_{\reg}}X$ may vary depending on
the connected components of $M_{\reg}$.

\begin{dfn}\label{dfn:3-2}
Let $V$ be a finite-dimensional vector space over $\Real$.
For its $\Real$-linear endomorphism
$A \colon V \longrightarrow V$, we set
\begin{align}
\Ev(A):=\{\text{the eigenvalues of } A^\Comp \colon
V^\Comp \longrightarrow V^\Comp \} \subset \Comp,
\end{align}
where $V^\Comp$ is the complexification of $V$.
\end{dfn}
In particular, for $x \in M_{\reg}$ we set
\begin{equation}
\Ev(\phi^{\prime}_x) :=\{ \text{the eigenvalues of $\phi^{\prime}_x \colon
(T_{M_{\reg}}X)_x \longrightarrow (T_{M_{\reg}}X)_x$} \} \subset \Comp.
\end{equation}

We also need the specialization functor
\begin{equation}
\nu_{M_{\reg}} \colon \Db(X) \longrightarrow \Db(T_{M_{\reg}}X)
\end{equation}
along $M_{\reg} \subset X$. In order to recall the construction of this
functor, consider the standard commutative diagram:
\begin{equation}
\xymatrix{
T_{M_{\reg}}X \ar[d]^{\tau} \ar@{^{(}->}[r]^s & \tl{X_{M_{\reg}}} \ar[d]^p &
\Omega_X \ar@{_{(}->}[l]_j \ar[dl]^{\tl{p}} \\
M_{\reg} \ar@{^{(}->}[r]^i& X,} \label{diag:2-1}
\end{equation}
where $\tl{X_{M_{\reg}}}$ is the normal deformation of $X$ along $M_{\reg}$
and $t \colon \tl{X_{M_{\reg}}} \longrightarrow \Real$ is the deformation
parameter. Recall that $\Omega_X$ is defined by $t > 0$ in
$\tl{X_{M_{\reg}}}$. Then the specialization $\nu_{M_{\reg}}(F)$ of $F$
along $M_{\reg}$ is defined by
\begin{equation}
\nu_{M_{\reg}}(F):=s^{-1}Rj_*\tl{p}^{-1}(F).
\end{equation}
Note that $\nu_{M_{\reg}}(F)$ is a conic object in $\Db(T_{M_{\reg}}X)$
whose support is contained in the normal cone $C_{M_{\reg}}(\supp(F))$ to
$\supp(F)$ along $M_{\reg}$. Since $F$ is $\Real$-constructible,
$\nu_{M_{\reg}}(F)$ is also $\Real$-constructible. By construction, there
exists a natural morphism
\begin{equation}
\Phi^{\prime} \colon (\phi^{\prime})^{-1}\nu_{M_{\reg}}(F) \longrightarrow
\nu_{M_{\reg}}(F)
\end{equation}
induced by $\Phi \colon \phi^{-1}F \longrightarrow F$. In the sequel, let us
assume the conditions:
\begin{enumerate}
\item $\supp(F) \cap M$ is compact and contained in $M_{\reg}$.
\item $1 \notin \Ev(\phi^{\prime}_x)$ for any $x \in \supp(F) \cap M_{\reg}$.
\end{enumerate}
The condition (ii) implies that the graph of $\phi$ in $X\times X$
intersects cleanly (see \cite[Definition 4.1.5]{K-S}) with the diagonal set
$\Delta_X \simeq X$ in an open neighborhood of $\supp(F) \cap M_{\reg}$. It
follows also from the condition (ii) that for an open neighborhood $U$ of
$\supp(F) \cap M_{\reg}$ in $M_{\reg}$ the fixed point set of
$\phi^{\prime}|_{\tau^{-1}(U)} \colon \tau^{-1}(U) \longrightarrow
\tau^{-1}(U)$ is contained in the zero-section $M_{\reg}$ of
$T_{M_{\reg}}X$. Set $\tl{U}=\tau^{-1}(U)$,
$\tl{F}=\nu_{M_{\reg}}(F)|_{\tl{U}}$ and $\tl{\Phi} =\Phi^{\prime}|_{\tl{U}}
\colon (\phi^{\prime}|_{\tl{U}})^{-1}\tl{F} \longrightarrow \tl{F}$. Then
also for the pair $(\tl{F},\tl{\Phi})$, we can define the characteristic
class $C(\tl{F},\tl{\Phi})\in H_{\supp(F) \cap
M_{\reg}}^0(\tl{U};\omega_{\tl{U}})$.

\begin{prp}\label{prp:3-3}{\rm \bf (\cite[Proposition 3.1]{M-T-3})}
Under the conditions {\rm(i)} and {\rm (ii)},
the local contribution $c(F,\Phi)_M$ from $M$ is
equal to $\dint_{\tl{U}}C(\tl{F},\tl{\Phi})$.
\end{prp}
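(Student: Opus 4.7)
My plan is to realize both sides as two restrictions, at $t>0$ and at $t=0$, of a single ``relative characteristic class'' living on the normal deformation $\tl{X_{M_{\reg}}}$, and then to conclude by a locally-constant-in-$t$ argument.

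First I would construct an endomorphism $\tl{\phi}$ of a small open neighborhood $W\subset\tl{X_{M_{\reg}}}$ of $\supp(F)\cap M$ that lifts $\phi$. Away from the zero-section, $\Omega_X\simeq X\times\Real_{>0}$ with $\tl{p}$ being projection to $X$, so one may take $\tl{\phi}=\phi\times\id_{\Real_{>0}}$ on $\Omega_X$, and this extends continuously to $\phi^{\prime}$ on the fiber $\{t=0\}\simeq T_{M_{\reg}}X$. Condition (ii) guarantees that, after shrinking $W$, the fixed point set of $\tl{\phi}$ in $W\cap\supp(Rj_*\tl{p}^{-1}F)$ equals the constant section $(\supp(F)\cap M)\times[0,\varepsilon)$, and in particular is compact and contained in the zero section of the $t$-direction.

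Next I would transport $\Phi$ to a morphism $\tl{\Phi}_W\colon\tl{\phi}^{-1}G\to G$ of $G:=Rj_*\tl{p}^{-1}F|_W$, using the base change $\tl{p}\circ\tl{\phi}=\phi\circ\tl{p}$ on $\Omega_X$ together with the natural transformation $\tl{\phi}^{-1}Rj_*\to Rj_*\tl{\phi}^{-1}$. Applying Kashiwara's chain \eqref{eq:2-10} to $(G,\tl{\Phi}_W)$ yields a class $C(G,\tl{\Phi}_W)\in H^0_Z(W;\omega_W)$ on $Z=(\supp(F)\cap M)\times[0,\varepsilon)$. By functoriality of that chain under the embeddings $s\colon T_{M_{\reg}}X\hookrightarrow\tl{X_{M_{\reg}}}$ and $j\colon\Omega_X\hookrightarrow\tl{X_{M_{\reg}}}$ in the diagram \eqref{diag:2-1}, the restriction of $C(G,\tl{\Phi}_W)$ to the fiber $\{t=c>0\}\simeq X$ equals $C(F,\Phi)_M$, while its restriction to $\{t=0\}\simeq T_{M_{\reg}}X$ equals $C(\tl{F},\tl{\Phi})$.

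Finally, since $Z$ is compact and projects properly onto $[0,\varepsilon)$, the fiber-integral $c\mapsto\int_{\{t=c\}}C(G,\tl{\Phi}_W)$ is a continuous, hence locally constant, $\Comp$-valued function of $c\in\Real_{\geq 0}$; its values at any $c>0$ and at $c=0$ must therefore agree, which is exactly the desired equality $c(F,\Phi)_M=\int_{\tl{U}}C(\tl{F},\tl{\Phi})$. The main obstacle I expect is the second step: making the transport of $\Phi$ through the normal deformation genuinely natural in $\Dc(W)$ and verifying the two restriction compatibilities. This rests on delicate but standard base change formulas for $\tl{p}$ and $Rj_*$ together with the description of the specialization functor $\nu_{M_{\reg}}$ recorded in \cite[Chapter IV]{K-S}, and it is also where condition (ii) enters decisively, to ensure cleanness of the diagonal intersections that underlie each stage of the chain.
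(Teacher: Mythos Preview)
Your strategy is sound and closely related to the paper's, but the packaging differs. The paper (following \cite[Proposition 9.6.11]{K-S}) does not build a single relative characteristic class on $\tl{X_{M_{\reg}}}$ and then restrict; instead it writes down a large commutative diagram (reproduced as diagram \eqref{diag:3-6} in the proof of Proposition \ref{prp:6-1}) whose left column is the chain defining $C(F,\Phi)_M$ on $X$, whose right column is the chain defining $C(\tl F,\tl\Phi)$ on $T_MX$, and whose bottom row is the identity $\Comp=\Comp$. The horizontal arrows are built functorially from the normal deformation diagram \eqref{diag:2-1}, the key one (labelled ${\bf A}$) coming from the chain of morphisms $\delta_X^!\to\cdots\to Rp_*s_*\delta_{T_MX}^!s_1^{-1}Rj_{1*}\tl{p_1}^{-1}$. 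Commutativity then gives the equality of the two integrals directly, with no continuity-in-$t$ step.

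What you call ``functoriality of the chain under $s$ and $j$'' is exactly the content of that diagram, so you have correctly located the hard part; once it is done, however, your third step becomes redundant. If you insist on your formulation, the honest way to run the last step is not ``continuous hence locally constant'' but rather to push $C(G,\tl\Phi_W)$ forward along $t\colon W\to[0,\varepsilon)$ and observe that the resulting class in $H^0_{[0,\varepsilon)}([0,\varepsilon);\omega_{[0,\varepsilon)})$ is determined by a single number, so its fiber integrals agree. Also note that your identification $\Omega_X\simeq X\times\Real_{>0}$ with $\tl p$ the projection is correct only through the twisted chart $(x',x'',t)\mapsto((tx',x''),t)$; in the intrinsic coordinates of $\tl{X_{M_{\reg}}}$ the lift is $\tl\phi(x',x'',t)=(t^{-1}\phi_1(tx',x''),\phi_2(tx',x''),t)$, whose analytic extension to $t=0$ (yielding $\phi'$) is what actually uses $\phi(M)\subset M$, while condition (ii) is what pins the fixed locus of $\tl\phi$ to the zero section near $\supp(F)\cap M$.
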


In \cite[Theorem 3.2]{M-T-3} we proved the following result by Proposition
\ref{prp:3-3}.

\begin{thm}\label{thm:3-4}{\rm \bf(\cite[Theorem 3.2]{M-T-3})}
Under the conditions {\rm (i)} and {\rm (ii)}, assume moreover that
\begin{equation}\label{eq:3-6}
\Ev(\phi^{\prime}_x) \cap \Real_{> 1} =\emptyset
\end{equation}
for any $x \in \supp(F) \cap M \subset M_{\reg}$. Then the localization
\begin{equation}
c(F,\Phi)_M =\tr(F|_M,\Phi|_M) = 
\dint_M \varphi(F|_M, \Phi|_M)
\end{equation}
holds.
\end{thm}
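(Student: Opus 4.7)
The plan is to use Proposition \ref{prp:3-3} to lift the problem to the normal bundle $T_{M_{\reg}}X$, and then exploit the eigenvalue hypothesis to deform $\phi^{\prime}$ fiberwise to the zero map while keeping the characteristic class constant.

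First I would apply Proposition \ref{prp:3-3} to obtain
\begin{equation*}
c(F,\Phi)_M = \dint_{\tl U} C(\tl F, \tl \Phi),
\end{equation*}
with $\tl F = \nu_{M_{\reg}}(F)|_{\tl U}$, $\tl \Phi = \Phi^{\prime}|_{\tl U}$, and $\tl U = \tau^{-1}(U)$ a conic tubular neighborhood of $\supp(F) \cap M$ in $T_{M_{\reg}}X$. The advantage of this reformulation is that $\tl F$ is conic along the fibers of $\tau$ and $\phi^{\prime}$ acts linearly on each fiber, so we can deform the latter explicitly.

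Next I would introduce the one-parameter family $\phi^{\prime}_t := t \cdot \phi^{\prime}$ for $t \in [0,1]$ of endomorphisms of $T_{M_{\reg}}X$ over $M_{\reg}$, with induced morphisms $\tl \Phi_t$ of $\tl F$. The key verification is that condition (ii) persists throughout the deformation: combining (ii) with the new hypothesis we have $\Ev(\phi^{\prime}_x) \cap \Real_{\geq 1} = \emptyset$, so any real eigenvalue $\lambda$ of $\phi^{\prime}_x$ satisfies $\lambda < 1$, hence $t\lambda < 1$ for all $t \in [0,1]$, while non-real eigenvalues remain non-real after multiplication by real $t$. Thus $1 \notin \Ev(\phi^{\prime}_{t,x})$ for every $t \in [0,1]$ and $x \in \supp(F) \cap M$, so the fixed point set of $\phi^{\prime}_t$ on $\tl U$ equals the zero section $M_{\reg}$ for every $t$.

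The technical heart of the argument is the homotopy invariance $C(\tl F, \tl \Phi_t) = C(\tl F, \tl \Phi_0)$ for all $t \in [0,1]$. This should follow from applying the chain of morphisms in Definition \ref{dfn:2-2} to the family $(\tl F, \tl \Phi_t)$ relatively over the parameter interval: clean intersection of the graph of $\phi^{\prime}_t$ with the diagonal along the zero section holds throughout, the six-functor operations defining $C(\cdot,\cdot)$ vary continuously, and the conicity of $\nu_{M_{\reg}}(F)$ together with the absence of escaping fixed points in the fibers prevents any jump. At $t = 0$, the endomorphism $\phi^{\prime}_0$ collapses every fiber onto the zero section, and via the natural morphism $\nu_{M_{\reg}}(F)|_{M_{\reg}} \simeq F|_{M_{\reg}}$ the induced morphism $\tl \Phi_0$ factors through $\Phi|_{M_{\reg}}$. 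Putting these together with Proposition \ref{prp:2-7} and the definition \eqref{eq:2-23} of $\varphi(F|_M,\Phi|_M)$, one concludes
\begin{equation*}
\dint_{\tl U} C(\tl F, \tl \Phi_0) = \dint_M \varphi(F|_M, \Phi|_M) = \tr(F|_M, \Phi|_M).
\end{equation*}

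The main obstacle is to make the homotopy invariance step rigorous. The characteristic class $C(\cdot,\cdot)$ is defined as the image of $\id$ through a sequence of morphisms in the derived category, and one must control supports and convergence uniformly in $t$; this is precisely where the assumption $\Ev(\phi^{\prime}_x) \cap \Real_{>1} = \emptyset$ is indispensable, since a positive real eigenvalue greater than $1$ would produce fixed points escaping to infinity along the fibers of $\tau$ during the deformation $t\phi^{\prime}$, breaking the clean intersection hypothesis and invalidating the comparison of characteristic classes.
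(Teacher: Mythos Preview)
Your strategy is essentially the one the paper indicates: the paper does not reprove Theorem~\ref{thm:3-4} here but cites \cite[Theorem~3.2]{M-T-3} and notes only that it was ``proved by Proposition~\ref{prp:3-3}.'' So you have correctly identified the first move, and the deformation $\phi'_t=t\phi'$ together with homotopy invariance of the characteristic class is precisely the mechanism behind the cited result (this is \cite[Proposition~9.6.8]{K-S}, to which the paper appeals in Proposition~\ref{prp:4-9}). Your eigenvalue check that $1\notin\Ev(t\phi'_x)$ for all $t\in[0,1]$ is correct.

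There is one point where your sketch is looser than the standard argument. You run the homotopy all the way to $t=0$ and then read off the answer from the degenerate map $\phi'_0=i_M\circ\tau$. Two things need care here. First, you must actually construct a morphism $\Psi:\psi^{-1}\tl F\to p^{-1}\tl F$ on $\tl U\times[0,1]$ extending the $\Phi'_t$ across $t=0$; for $t>0$ this comes from conicness of $\nu_{M_{\reg}}(F)$ (the canonical isomorphism $m_t^{-1}\nu_{M_{\reg}}(F)\simeq\nu_{M_{\reg}}(F)$), but the extension to $t=0$ requires the natural morphism $\tau^{-1}(F|_{M_{\reg}})\to\nu_{M_{\reg}}(F)$ coming from conicness, and you should say this explicitly. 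Second, once at $t=0$ you still need to justify $\int_{\tl U}C(\tl F,\tl\Phi_0)=\tr(F|_M,\Phi|_M)$; this is not quite Proposition~\ref{prp:2-7} alone, since $\supp\tl F$ is not compact in $\tl U$. The usual way around both issues (as in \cite[\S9.6]{K-S}) is to deform only to some small $t=\varepsilon>0$, so that all eigenvalues have modulus $<1$, and then use the contracting property directly rather than the singular endpoint $t=0$. Your final paragraph correctly flags that the homotopy step is where the work lies, but the specific difficulty is not ``supports and convergence uniformly in $t$'' so much as constructing the family morphism and handling the degenerate fiber.
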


In the complex case, we have the following stronger result.

\begin{thm}\label{thm:3-5} {\rm \bf (\cite[Theorem 3.3]{M-T-3}) }
Under the conditions {\rm (i)} and {\rm (ii)},
assume moreover that $X$ and $\phi \colon X
\longrightarrow X$ are complex analytic and $F \in \Db_c(X)$ i.e. $F$ is
$\Comp$-constructible. Assume also that there exists a compact complex
manifold $N$ such that $\supp(F) \cap M \subset N \subset M$. Then the
localization
\begin{equation}
c(F,\Phi)_M=\tr(F|_M,\Phi|_M)=
\dint_M \varphi(F|_M, \Phi|_M)
\end{equation}
holds.
\end{thm}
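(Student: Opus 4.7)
By Proposition \ref{prp:3-3}, the local contribution equals $c(F,\Phi)_M = \dint_{\tl U} C(\tl F, \tl \Phi)$ on an open neighborhood $\tl U$ of $\supp(F) \cap M$ in the normal bundle $T_{M_{\reg}}X$. Since $X$ and $\phi$ are complex analytic and $N \supset \supp(F) \cap M$ is a compact complex submanifold, the restriction of $T_{M_{\reg}}X$ to a neighborhood of $\supp(F) \cap M$ is a holomorphic vector bundle and $\phi'$ is a holomorphic bundle automorphism. In this setting $\tl F = \nu_{M_{\reg}}(F)$ is $\Comp$-constructible and in fact admits an $S^1$-monodromic structure refining the usual $\Real^*_{>0}$-conic one, coming from the complex scaling on the fibers of the normal bundle.

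The strategy is to apply Theorem \ref{thm:3-4} after deforming $\phi'$ within the space of holomorphic bundle automorphisms satisfying condition (ii). First I would construct a continuous family $\{\phi'_s\}_{s \in [0,1]}$ of holomorphic bundle automorphisms starting at $\phi'_0 = \phi'$ and ending at some $\phi'_1$ such that $\Ev((\phi'_1)_x) \cap \Real_{>1} = \emptyset$ for every $x \in \supp(F) \cap M$, while maintaining $1 \notin \Ev((\phi'_s)_x)$ throughout. A concrete mechanism is a combination of multiplication by $e^{i\theta s}$ and scaling by a factor strictly smaller than $1/\max_x \|\phi'_x\|$: the locus $\{A : 1 \in \Ev(A)\}$ is a complex hypersurface in ${\rm GL}_n(\Comp)$, hence of real codimension two, so its complement is path-connected, and by compactness of $\supp(F) \cap M$ together with standard openness arguments a suitable global family can be built.

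Using the $S^1$-monodromic structure of $\tl F$, the morphism $\tl \Phi$ can then be transported along the homotopy to produce a family $\tl \Phi_s$ of morphisms $(\phi'_s)^{-1}\tl F \longrightarrow \tl F$. By the functoriality of Kashiwara's characteristic class construction (Definition \ref{dfn:2-2}) in such families, $s \longmapsto \dint_{\tl U} C(\tl F, \tl \Phi_s) \in \Comp$ is continuous; since conditions (i) and (ii) hold throughout and the integral only depends on the homotopy class of the data in a suitable sense, it must be locally constant, hence constant on $[0,1]$. At $s = 1$, Theorem \ref{thm:3-4} applies and gives $\dint_{\tl U} C(\tl F, \tl \Phi_1) = \dint_M \varphi(F|_M, \Phi|_M)$; the right-hand side is unaffected by the deformation because it depends only on $\Phi|_M$, while the family $\phi'_s$ varies only in the normal directions and restricts to $\id_M$ on the zero section. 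Finally, Proposition \ref{prp:2-7} identifies this integral with $\tr(F|_M, \Phi|_M)$.

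The main obstacle is to carry out the deformation coherently: constructing the family $\tl \Phi_s$ in the derived category and justifying the local constancy of $\dint_{\tl U} C(\tl F, \tl \Phi_s)$ along the homotopy. The decisive ingredient is precisely that, in the complex analytic setting, the specialization $\nu_{M_{\reg}}(F)$ inherits an $S^1$-monodromic structure strictly stronger than the generic $\Real^*_{>0}$-conic one; this allows one to realize rotations by $e^{i\theta}$ as genuine automorphisms of the sheaf-theoretic data, thereby lifting the homotopy of bundle automorphisms to an actual homotopy of pairs $(\tl F, \tl \Phi_s)$ to which the characteristic class construction can be applied functorially.
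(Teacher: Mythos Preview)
Your strategy is essentially the one used in the original proof in \cite{M-T-3} (which in turn follows \cite[Corollary 9.6.16]{K-S}): pass to the normal bundle via Proposition~\ref{prp:3-3}, exploit the $\Comp^\times$-conic structure on $\nu_M(F)$ (not merely $S^1$-monodromic: the full $\Comp^\times$-action is available and is what is actually used) to deform $\phi'$ by a scalar path $c_s\in\Comp^\times$, invoke homotopy invariance of the characteristic class, and then apply Theorem~\ref{thm:3-4} once all eigenvalues have modulus strictly less than $1$. The present paper does not reprove this statement directly; it later establishes the stronger Theorem~\ref{thm:6-5} at the level of Lefschetz cycles, arguing locally on $M_{\reg}$ and again invoking the $\Comp^\times$-action.

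There is, however, a genuine gap in your justification of the deformation step. The codimension-two argument in $GL_n(\Comp)$ only shows that the complement of $\{A:\det(I-A)=0\}$ is path-connected; it does not produce a global family of holomorphic bundle endomorphisms over $M$, since pointwise paths need not glue. What actually works is to deform by \emph{scalars} only, setting $\phi'_s=c_s\phi'$ for a path $c_s$ in $\Comp^\times$, which is automatically a bundle map. The condition to maintain along the path is $1/c_s\notin K:=\bigcup_{x\in\supp(F)\cap M}\Ev(\phi'_x)$, while reaching some $c_1$ with $|c_1|\cdot\sup_{\lambda\in K}|\lambda|<1$. For an arbitrary compact $K\subset\Comp\setminus\{1\}$ this can fail: $K$ could be a circle enclosing $1$, so that $1$ lies in a bounded component of $\Comp\setminus K$ and cannot be joined to the exterior. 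This is exactly where the hypothesis on $N$ enters, and you have not used it. Since $N$ is a compact complex manifold and $\phi'$ is holomorphic over it, the coefficients of the characteristic polynomial of $\phi'_x$ are holomorphic functions on $N$, hence locally constant; thus $K$ is a \emph{finite} set. Then $\Comp\setminus K$ is connected, and for generic small $\theta$ the ray $e^{-i\theta}\Real_{>0}$ misses $K$, so the path ``rotate by $e^{i\theta}$, then shrink radially'' does the job. Once you make this role of $N$ explicit, the remainder of your outline (transporting $\tl\Phi$ via the $\Comp^\times$-structure and applying \cite[Proposition 9.6.8]{K-S} for constancy) goes through.
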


\begin{rem}\label{rem:3-6}
Later we will generalize 
Theorems \ref{thm:3-4} and \ref{thm:3-5}. To treat the more general
case where the set $\Ev(\phi^{\prime}_x)$ may vary depending 
on $x \in \supp(F) \cap M$, we need some precise arguments 
on Lefschetz cycles which will be introduced in
the next section. One naive idea to treat this case would be to cover
$\supp(F) \cap M$ by sufficiently small closed subsets $Z_i \subset \supp(F)
\cap M$ and use the local contributions of $(
\nu_{M_{\reg}}(F) )_{\tau^{-1}Z_i}$ to compute that of $\nu_{M_{\reg}}(F)$
by a Mayer-Vietoris type argument. However this very simple idea does not
work, because we cannot apply \cite[Proposition 9.6.2]{K-S} to constructible
sheaves with ``non-compact" support such as $(
\nu_{M_{\reg}}(F) )_{\tau^{-1}Z_i}$ to justify the Mayer-Vietoris type
argument.
\end{rem}

\section{Some properties of Lefschetz cycles}\label{sec:4}

In this section, we recall our construction of Lefschetz cycles in
\cite{M-T-3} and their standard properties. We inherit the notations in
Section \ref{sec:2}. Now assume that the fixed point set $M=\{ x \in X \ |\
\phi(x)=x\}$ of $\phi \colon X \longrightarrow X$ is a submanifold of $X$.
However here we do not assume that $M$ is connected. We also assume that
$\Delta_X$ intersects with $\Gamma_{\phi}=\{ (\phi(x),x) \in X \times X \ |\
x \in X\}$ cleanly along $M$ in $X \times X$. Identifying $\Gamma_{\phi}$
with $X$ by the second projection $X \times X \longrightarrow X$, we obtain
a natural identification $M=\Gamma_{\phi} \cap \Delta_X$. We also identify
$T^*_{\Delta_X}(X\times X)$ with $T^*X$ 
by the first projection $T^*(X\times X) \simeq T^*X \times T^*X
\longrightarrow T^*X$ as usual.

\begin{lem}{\rm \bf(\cite[Lemma 4.1]{M-T-3})}
The subset $T^*_{\Gamma_{\phi}}(X\times X) \cap T^*_{\Delta_X}(X\times X)$
of $(\Gamma_{\phi} \cap \Delta_X) \utimes{\Delta_X} T^*_{\Delta_X}(X\times
X) \simeq M \utimes{X}T^*X$ is a subbundle of
$M\utimes{X}T^*X$ (whose rank as a vector bundle 
may vary depending on the connected components of $M$).
\end{lem}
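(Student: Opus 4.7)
The plan is to compute the fiber of $T^*_{\Gamma_\phi}(X\times X)\cap T^*_{\Delta_X}(X\times X)$ at each $x\in M$ explicitly under the given identification with a subspace of $T^*_xX$, and then to verify that its dimension is locally constant on $M$; this suffices for it to be a smooth subbundle of $M\utimes{X}T^*X$.

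First I would identify the fibers. For $x\in M$ one has $T_{(x,x)}\Delta_X = \{(v,v) : v\in T_xX\}$ and $T_{(x,x)}\Gamma_\phi = \{(\phi'_x v,v) : v\in T_xX\}$, so taking annihilators gives
\begin{align*}
T^*_{\Delta_X}(X\times X)_{(x,x)} &= \{(\xi_1,\xi_2) \in T^*_xX\oplus T^*_xX : \xi_1+\xi_2=0\},\\
T^*_{\Gamma_\phi}(X\times X)_{(x,x)} &= \{(\xi_1,\xi_2) \in T^*_xX\oplus T^*_xX : {}^{t}\phi'_x\,\xi_1+\xi_2=0\}.
\end{align*}
Under the identification $(\xi_1,\xi_2)\mapsto\xi_1$ of $T^*_{\Delta_X}(X\times X)$ with $T^*X$, the intersection therefore corresponds to $\ker(\id-{}^{t}\phi'_x)\subset T^*_xX$, i.e.\ the annihilator of $\operatorname{im}(\phi'_x-\id)\subset T_xX$.

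Next I would invoke the clean intersection hypothesis. The identity $T_xM = T_{(x,x)}\Gamma_\phi\cap T_{(x,x)}\Delta_X$ translates to $\ker(\phi'_x-\id)=T_xM$ inside $T_xX$, so $\operatorname{rank}(\phi'_x-\id)=\dim X-\dim_xM$, which is locally constant on $M$. Since $x\mapsto \phi'_x-\id$ is a smooth family of endomorphisms of $TX|_M$ of locally constant rank, its image is a smooth subbundle of $TX|_M$ on each connected component; hence its annihilator $\ker(\id-{}^{t}\phi'_x)$ is a smooth subbundle of $T^*X|_M$ of rank $\dim_xM$ on each component, which is precisely the desired conclusion.

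The only delicate point is the bookkeeping of the two conormal identifications and the appearance of the transpose ${}^{t}\phi'_x$; after that the argument is pure linear algebra, relying on the equality of $\dim\ker(\phi'_x-\id)$ and $\dim\ker(\id-{}^{t}\phi'_x)$ together with the locally constant codimension supplied by clean intersection. I do not anticipate any substantive obstacle beyond keeping these identifications straight.
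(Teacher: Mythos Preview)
Your argument is correct. The fiberwise computation of the intersection as $\ker(\id-{}^t(d\phi)_x)\subset T^*_xX$ is right, and the clean intersection hypothesis does give $\ker(d\phi_x-\id)=T_xM$, whence the rank of $d\phi_x-\id$ on $TX|_M$ is the locally constant quantity $\dim X-\dim_xM$; the standard fact that a smooth bundle map of locally constant rank has a smooth kernel then finishes the job.

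Two minor remarks. First, the paper does not supply its own proof of this lemma: it is quoted from \cite[Lemma~4.1]{M-T-3}, so there is no in-text argument to compare against. Your linear-algebra approach is exactly the natural one and is presumably what the cited reference does as well. Second, be careful with notation: in this paper $\phi'_x$ denotes the induced endomorphism of the \emph{normal} space $(T_MX)_x$, not the full differential $d\phi_x\colon T_xX\to T_xX$ that you need here. Since $d\phi_x|_{T_xM}=\id$, the two maps have the same eigenvalue-$1$ behaviour and your rank computation is unaffected, but to avoid a clash with the paper's conventions you should write $d\phi_x$ (or $T_x\phi$) rather than $\phi'_x$ throughout your proof.
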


\begin{dfn}{\rm \bf (\cite[Definition 4.2]{M-T-3})}
We denote the subbundle $T^*_{\Gamma_{\phi}}(X\times X) \cap
T^*_{\Delta_X}(X\times X)$ of $M\utimes{X}T^*X$ by $\Lb$ and call it the
Lefschetz bundle associated with $\phi \colon X \longrightarrow X$.
\end{dfn}

\begin{prp}\label{prp:4-4}{\rm \bf(\cite[Proposition 4.2]{M-T-3})}
The natural surjective morphism \\$\rho \colon M\utimes{X}T^*X
\longtwoheadrightarrow T^*M$ induces an isomorphism $\Lb \simto T^*M$.
\end{prp}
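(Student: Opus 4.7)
The plan is to verify the isomorphism fiber by fiber at each $x \in M$, then promote to a bundle isomorphism by smoothness. Writing a covector at $(x,x) \in X \times X$ as a pair $(\xi, \eta) \in T^*_x X \oplus T^*_x X$, the conormal condition to $\Delta_X$ reads $\xi + \eta = 0$, and since $T_{(x,x)} \Gamma_\phi = \{(\phi^{\prime}_x v, v) \mid v \in T_x X\}$, the conormal condition to $\Gamma_\phi$ reads $\eta = -(\phi^{\prime}_x)^t \xi$. Combining these and identifying via the first projection, the fiber $\Lb_x$ is
\[
\Lb_x = \{\xi \in T^*_x X \mid (\phi^{\prime}_x)^t \xi = \xi\} = \ker\bigl((\phi^{\prime}_x)^t - \id\bigr).
\]

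Next I would translate the clean intersection hypothesis into linear algebraic data. Since $T_{(x,x)} \Delta_X \cap T_{(x,x)} \Gamma_\phi = \{(v, v) \mid v \in \ker(\phi^{\prime}_x - \id)\}$, clean intersection forces $T_x M = \ker(\phi^{\prime}_x - \id)$ as subspaces of $T_x X$, and by duality $(T_x M)^{\perp} = \Im\bigl((\phi^{\prime}_x)^t - \id\bigr)$. Because $\rho_x \colon T^*_x X \twoheadrightarrow T^*_x M$ is restriction of forms with kernel precisely $(T_x M)^{\perp}$, the fiberwise injectivity of $\rho|_{\Lb}$ reduces to
\[
\ker\bigl((\phi^{\prime}_x)^t - \id\bigr) \cap \Im\bigl((\phi^{\prime}_x)^t - \id\bigr) = 0.
\]

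This last vanishing is equivalent to the generalized $1$-eigenspace of $\phi^{\prime}_x$ coinciding with the $1$-eigenspace $T_x M$, equivalently to $1$ not being an eigenvalue of the automorphism that $\phi^{\prime}_x$ induces on the normal fiber $T_x X / T_x M$ (cf.\ \eqref{eq:1-9}); this is the non-degeneracy built into the paper's standing hypothesis. Granting it, one obtains a direct sum decomposition $T^*_x X = \ker\bigl((\phi^{\prime}_x)^t - \id\bigr) \oplus \Im\bigl((\phi^{\prime}_x)^t - \id\bigr)$, and $\rho_x$ restricts to a linear isomorphism $\Lb_x \simto T^*_x M$; the rank match $\d \Lb_x = \d \ker(\phi^{\prime}_x - \id) = \d T_x M = \d T^*_x M$ is automatic. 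Since $\phi^{\prime}$ depends smoothly on $x$, these pointwise isomorphisms assemble into a bundle isomorphism $\rho|_{\Lb} \colon \Lb \simto T^*M$.

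The step I expect to be the main obstacle is precisely this injectivity, since bare clean intersection only delivers $\ker(\phi^{\prime}_x - \id) = T_x M$, strictly weaker than $\ker(\phi^{\prime}_x - \id)^2 = \ker(\phi^{\prime}_x - \id)$. I would resolve it by descending $\phi^{\prime}_x$ to the normal fiber $T_x X / T_x M$ and invoking the hypothesis that $1$ lies outside the spectrum of this induced automorphism---exactly the transversality of $\Gamma_\phi$ and $\Delta_X$ in the normal direction, which is what allows the two direct sum decompositions $T^*_x X = T^*_x M \oplus (T_x M)^{\perp}$ and $T^*_x X = \Lb_x \oplus (T_x M)^{\perp}$ to share the same second summand.
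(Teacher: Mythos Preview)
The paper does not give its own proof of this proposition; it is quoted from \cite[Proposition~4.2]{M-T-3}.  Your fiberwise linear-algebraic argument is correct and is the standard route: identify $\Lb_x=\ker\bigl((d\phi_x)^t-\id\bigr)$, identify $\ker\rho_x=(T_xM)^\perp=\Im\bigl((d\phi_x)^t-\id\bigr)$ via clean intersection, and then check that these meet only in $0$.

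Two remarks.  First, a notational mismatch: throughout your argument $\phi'_x$ denotes the full differential $d\phi_x\colon T_xX\to T_xX$, while in the paper $\phi'$ is reserved for the induced endomorphism of the \emph{normal} bundle $T_MX$.  You should keep these separate, since the crucial hypothesis \eqref{eq:1-9} concerns the latter.

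Second, the point you flag as ``the main obstacle'' is genuine and your resolution is the right one.  Clean intersection alone yields only $\ker(d\phi_x-\id)=T_xM$, which does \emph{not} force $\ker\bigl((d\phi_x)^t-\id\bigr)\cap\Im\bigl((d\phi_x)^t-\id\bigr)=0$; one can write down a shear (e.g.\ $\phi(x,y)=(x,x+y)$ on $\Real^2$, $M=\{x=0\}$) where the intersection is clean yet $\rho|_{\Lb}$ is the zero map.  What is actually needed is exactly the condition $1\notin\Ev(\phi'_x)$ on the normal bundle, which is the paper's standing hypothesis throughout (see the Introduction and Sections~\ref{sec:5}--\ref{sec:6}); Section~\ref{sec:4}'s phrasing in terms of clean intersection is slightly loose, since \eqref{eq:1-9} implies clean intersection but not conversely.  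Under \eqref{eq:1-9} your direct-sum decomposition $T^*_xX=\Lb_x\oplus (T_xM)^\perp$ goes through and the proof is complete.
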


\noindent From now on, by Proposition \ref{prp:4-4} we shall identify the
Lefschetz bundle $\Lb$ with $T^*M$.

Now let $F$ be an object of $\Dc(X)$ and $\Phi \colon \phi^{-1}F
\longrightarrow F$ a morphism in $\Dc(X)$. To these data $(F, \Phi)$, we can
associate a conic Lagrangian cycle in the Lefschetz bundle $\Lb \simeq T^*M$
as follows. Denote by $\pi_X \colon T^*X \longrightarrow X$ the natural
projection and recall that we have the functor
\begin{equation}
\mu_{\Delta_X} \colon \Db(X\times X) \longrightarrow
\Db(T^*_{\Delta_X}(X\times X))
\end{equation}
of microlocalization which satisfies
\begin{equation}
R\pi_{X*}\mu_{\Delta_X} \simeq \delta_X^! \simeq
\delta_X^{-1}\RG_{\Delta_X}.
\end{equation}
Recall also that the micro-support $\SS(F)$ of $F$ is a closed conic
subanalytic Lagrangian subset of $T^*X$ and the support of
$\mu_{\Delta_X}(F\boxtimes \D F)$ is contained in $\SS(F) \subset T^*X
\simeq T^*_{\Delta_X}(X\times X)$. Then we have a chain of natural
morphisms:
\begin{eqnarray}
R{\rm Hom}_{\Comp_X}(F,F)
&\simeq & \RG(X;\delta_X^!(F \boxtimes \D F)) \label{eq:4-6}\\
&\simeq & \RG_{\SS(F)}(T^*X;\mu_{\Delta_X}(F \boxtimes \D F)) \\
&\longrightarrow & \RG_{\SS(F)}(T^*X; \mu_{\Delta_X}(h_*h^{-1}(F \boxtimes
\D F))) \\
&\simeq & \RG_{\SS(F)}(T^*X;\mu_{\Delta_X}(h_*(\phi^{-1}F \otimes \D F))) \\
&\overset{\Phi}{\longrightarrow} & \RG_{\SS(F)}(T^*X; \mu_{\Delta_X}(h_*(F
\otimes \D F))) \\
& \longrightarrow &
\RG_{\SS(F)}(T^*X;\mu_{\Delta_X}(h_*\omega_X)).\label{eq:4-11}
\end{eqnarray}

\begin{lem}\label{lem:4-5}{\rm \bf(\cite[Lemma 4.4]{M-T-3})}
\begin{enumerate}
\item The support of $\mu_{\Delta_X}(h_*\omega_X)$ is contained in $\Lb$.
\item The restriction of $\mu_{\Delta_X}(h_*\omega_X)$ to $\Lb\simeq T^*M$
is isomorphic to $\pi_M^{-1}\omega_M$, where \\ $\pi_M \colon T^*M
\longrightarrow M$ is the natural projection.
\end{enumerate}
\end{lem}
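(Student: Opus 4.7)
The plan is to treat parts (i) and (ii) separately, using standard microlocal sheaf theory while exploiting the clean intersection of $\Gamma_\phi$ and $\Delta_X$ along $M$.

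For part (i), I would combine two standard facts. First, for any closed submanifold $Y$ of an ambient real analytic manifold $Z$ and any $G \in \Db(Z)$, the microlocalization satisfies the support estimate $\supp \mu_Y(G) \subset \SS(G) \cap T^*_Y Z$ inside $T^*_Y Z$. Second, the closed embedding $h \colon X \hookrightarrow X \times X$ has image $\Gamma_\phi$, and $\omega_X$ is locally constant up to a shift by $\d X$, so the micro-support $\SS(h_* \omega_X)$ equals $T^*_{\Gamma_\phi}(X\times X)$. Combining these gives
\begin{equation}
\supp \mu_{\Delta_X}(h_* \omega_X) \subset T^*_{\Gamma_\phi}(X \times X) \cap T^*_{\Delta_X}(X \times X) = \Lb.
\end{equation}

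For part (ii), I would use the Fourier--Sato realization of microlocalization, writing $\mu_{\Delta_X}(h_* \omega_X)$ as the Fourier--Sato transform of the specialization $\nu_{\Delta_X}(h_* \omega_X)$, and compute each step. The specialization is a conic object on $T_{\Delta_X}(X\times X) \simeq TX$ supported on the normal cone $C_{\Delta_X}(\Gamma_\phi)$. By clean intersection, above $M$ this normal cone is the subbundle of $TX|_M$ corresponding to the image of $d\phi - \id$, which by the clean intersection hypothesis is invertible on normal directions and so identifies canonically with $T_M X$. On this subbundle $\nu_{\Delta_X}(h_* \omega_X)$ is the pullback from $M$ of the dualizing complex of $X$ restricted to $M$. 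Applying Fourier--Sato transforms a conic sheaf supported on a subbundle $V \subset E$ into a conic sheaf supported on the annihilator $V^\circ \subset E^*$, locally constant in fiber directions. In our situation, with $E = T_{\Delta_X}(X\times X)|_M \simeq TX|_M$ and $V = T_M X$, the annihilator $V^\circ$ inside $T^*X|_M$ is exactly $\Lb$, which by Proposition \ref{prp:4-4} is $T^*M$. The upshot is a conic sheaf on $\Lb$ which is locally constant in fiber directions and whose restriction to $M$ is naturally $\omega_M$, i.e.\ exactly $\pi_M^{-1} \omega_M$.

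The main technical obstacle will be tracking the shift and orientation factors through the Fourier--Sato step so that the dualizing complex appearing in the final formula is genuinely $\omega_M$ and not $\omega_X|_M$ or a twisted variant. Because the statement is local on $T^*X|_M$, and because clean intersection permits coordinates on $X \times X$ in which $\Delta_X$, $\Gamma_\phi$ and $M$ become linear subspaces, I would reduce the orientation bookkeeping to the textbook Fourier--Sato computation for constant sheaves on linear subspaces. In this linear model the orientation of $T_M X \subset TX|_M$ pairs canonically with that of its annihilator $T^*M \subset T^*X|_M$, and the codimension shift between $\omega_X$ and $\omega_M$ is supplied by the Fourier--Sato transform exchanging a subbundle of rank $\cd M$ with one of rank $\d M$, cancelling the discrepancy and producing $\pi_M^{-1}\omega_M$.
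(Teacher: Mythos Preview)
The paper does not give its own proof of this lemma; it is quoted from \cite{M-T-3} without argument, so there is no in-paper proof to compare against. I will therefore comment only on the soundness of your proposal.

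Your argument for (i) is correct and is the standard one: the support estimate $\supp\mu_Y(G)\subset \SS(G)\cap T^*_YZ$ together with $\SS(h_*\omega_X)=T^*_{\Gamma_\phi}(X\times X)$ immediately gives the claim.

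Your strategy for (ii) is also sound, but one identification should be stated more carefully. The subbundle $V=C_{\Delta_X}(\Gamma_\phi)\subset T_{\Delta_X}(X\times X)|_M\simeq TX|_M$ is $\mathrm{Im}(d\phi-\id)$, which is only \emph{isomorphic} to $T_MX$ (via the factorization of $d\phi-\id$ through $TX|_M/TM$), not literally equal to it. What matters for the Fourier--Sato step is the annihilator of $V$ in $T^*X|_M$, and this is indeed $\ker({}^t d\phi-\id)=\Lb$, as you claim. After that, the computation reduces to the standard Fourier--Sato identity for the constant sheaf on a linear subbundle, and your plan to pass to a local linear model to handle the shift and relative orientation (so that the answer is $\pi_M^{-1}\omega_M$ rather than a twist of $\omega_X|_M$) is exactly the right way to finish. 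This is routine but must be done; the rank of $V$ equals $\mathrm{codim}\,M$, and the Fourier--Sato shift by that rank combines with $\omega_X|_M\simeq or_X|_M[\dim X]$ and the relative orientation $or_{M|X}$ to yield $or_M[\dim M]\simeq\omega_M$.
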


By Lemma \ref{lem:4-5} there exists an isomorphism
\begin{equation}\label{eq:4-21}
\mu_{\Delta_X}(h_*\omega_X) \simeq (i_{\Lb})_*\pi_M^{-1}\omega_M,
\end{equation}
where $i_{\Lb} \colon \Lb \longhookrightarrow T^*X$ is the inclusion map. In
what follows, we sometimes omit the symbol $(i_{\Lb})_*$ in the above
identification \eqref{eq:4-21}. Combining the chain of morphisms
\eqref{eq:4-6}-\eqref{eq:4-11} with the isomorphism \eqref{eq:4-21}, we
obtain a morphism
\begin{equation}\label{eq:4-22}
{\rm Hom}_{\Db(X)}(F,F) \longrightarrow H^0_{\SS(F) \cap \Lb}( \Lb;
\pi_M^{-1}\omega_M).
\end{equation}

\begin{dfn}{\rm \bf(\cite[Definition 4.5]{M-T-3})}
We denote by $LC(F,\Phi)$ the image of \\$\id_F \in {\rm Hom}_{\Db(X)}(F,F)$
in $H^0_{\SS(F) \cap \Lb}(\Lb; \pi_M^{-1}\omega_M)$ by the morphism
\eqref{eq:4-22}.
\end{dfn}

\begin{lem}{\rm \bf(\cite[Lemma 4.6]{M-T-3})}
$\SS(F) \cap \Lb$ is contained in a closed conic subanalytic Lagrangian
subset of $\Lb \simeq T^*M$. 
\end{lem}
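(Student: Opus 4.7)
The plan is to exhibit an explicit closed conic subanalytic Lagrangian subset of $T^*M$ that contains the image of $\SS(F) \cap \Lb$ under the isomorphism $\rho \colon \Lb \simto T^*M$ of Proposition \ref{prp:4-4}. The underlying geometric observation is that, since $\phi|_M = \id_M$, for any $x \in M$ the map $(d\phi_x)^*$ acts as the identity on the pullback $T^*_xM \hookrightarrow T^*_xX$; consequently every covector $\xi \in \Lb_x$ (by definition fixed by $(d\phi_x)^*$) restricts along $T_xM$ to a covector whose zero locus contains the zero locus of $\xi$ on $T_xM$.

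First I would choose a subanalytic Whitney $\mu$-stratification $X = \bigsqcup_{\alpha \in A} X_\alpha$ in the sense of \cite[Section 8.3]{K-S} that is simultaneously adapted to $F$ and to the submanifold $M$. Adaptedness to $F$ yields the standard inclusion $\SS(F) \subset \bigcup_{\alpha} T^*_{X_\alpha}X$, and adaptedness to $M$ means that each stratum is either contained in $M$ or disjoint from $M$. Such a refinement exists by the subanalytic stratification theorem applied to the pair $(M, X \setminus M)$.

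Next I would analyze $T^*_{X_\alpha}X \cap \Lb$ stratum by stratum. If $X_\alpha \cap M = \emptyset$, then this intersection is empty because $\Lb \subset M \utimes{X} T^*X$ is based on $M$. If $X_\alpha \subset M$ and $(x,\xi) \in T^*_{X_\alpha}X \cap \Lb$, then $\xi$ vanishes on $T_xX_\alpha \subset T_xM$, so the restriction $\xi|_{T_xM}$ also vanishes on $T_xX_\alpha$, giving $\rho(x,\xi) \in T^*_{X_\alpha}M$. Combining both cases, one obtains
\begin{equation}
\rho(\SS(F) \cap \Lb) \subset \bigcup_{X_\alpha \subset M} T^*_{X_\alpha}M,
\end{equation}
and the right-hand side is the desired closed conic subanalytic Lagrangian subset of $T^*M$.

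The main (and rather mild) obstacle is to secure a single stratification that is simultaneously adapted to $F$ and to $M$ while retaining the $\mu$-stratification property needed to bound $\SS(F)$; this is a routine consequence of subanalytic Whitney stratification theory. Once this is in place, the remainder is a short linear-algebraic verification using only the description of $\Lb_x$ as the fixed subspace of $(d\phi_x)^*$ and the compatibility of covector restriction with the conormal structure, so no deeper microlocal input is required.
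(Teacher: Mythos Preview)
The paper does not actually give a proof of this lemma; it is quoted verbatim from \cite[Lemma 4.6]{M-T-3} and left uncited. So there is no in-paper argument to compare against, and one can only evaluate your proposal on its own merits.

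Your argument is correct and is the natural one. The key step, that $\rho(T^*_{X_\alpha}X \cap \Lb) \subset T^*_{X_\alpha}M$ whenever $X_\alpha \subset M$, is a clean linear-algebra observation: $\rho$ is restriction of covectors to $T_xM$, and a covector annihilating $T_xX_\alpha$ certainly restricts to one annihilating $T_xX_\alpha \subset T_xM$. One small point you assert without justification is that $\bigcup_{X_\alpha \subset M} T^*_{X_\alpha}M$ is \emph{closed} in $T^*M$. This is not automatic from the stratification being a $\mu$-stratification of $X$; you need that the induced stratification $M = \bigsqcup_{X_\alpha \subset M} X_\alpha$ satisfies Whitney's condition (a) as a stratification of $M$. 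This does follow, since for strata $X_\alpha, X_\beta \subset M$ the tangent spaces $T_{x_n}X_\beta$ lie in $T_{x_n}M$, so a limit $T$ in the Grassmannian of $T_xX$ is also the limit in the Grassmannian of $T_xM$, and Whitney (a) in $X$ (implied by the $\mu$-condition) then gives $T_xX_\alpha \subset T$. With that observation added, your proof is complete.
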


\begin{dfn}{\rm \bf (\cite[Definition 4.7]{M-T-3})}
Choose a closed conic subanalytic Lagrangian subset $\Lambda$ of $\Lb \simeq
T^*M$ such that $\SS(F) \cap \Lb \subset \Lambda$. We consider $LC(F,\Phi)$
as an element of $H^0_{\Lambda}(\Lb;\pi_M^{-1}\omega_M)$ and call it the
Lefschetz cycle associated with the pair $(F,\Phi)$.
\end{dfn}

As a basic property of Lefschetz cycles, we have the following homotopy
invariance. Let $I=[0,1]$ and let $\phi \colon X\times I \longrightarrow X$
be the restriction of a morphism of real analytic manifolds $X \times \Real
\longrightarrow X$. For $t \in I$, let $i_t \colon X \longhookrightarrow X
\times I$ be the injection defined by $x \longmapsto (x,t)$ and set $\phi_t
:=\phi \circ i_t \colon X \longrightarrow X$. Assume that the fixed point
set of $\phi_t$ in $X$ is smooth and does not depend on $t \in I$. We denote
this fixed point set by $M$. Let $F\in \Dc(X)$ and consider a morphism $\Phi
\colon \phi^{-1}F \longrightarrow p^{-1}F$ in $\Dc(X \times I)$, where $p
\colon X \times I \longrightarrow X$ is the projection. We set
\begin{equation}
\Phi_t :=\Phi|_{X\times \{t\}} \colon \phi_t^{-1}F \longrightarrow F
\end{equation}
for $t \in I$. We denote the Lefschetz bundle associated with $\phi_t$ by
$\Lb_t \simeq T^*M$.

\begin{prp}\label{prp:4-9}
Assume that $\supp(F) \cap M$ is compact and 
$\SS(F) \cap \Lb_t \subset T^*M$ does not depend on $t
\in I$ as a subset of $T^*M$. Then the Lefschetz cycle \\
$LC(F,\Phi_t) \in$ $H^0_{\SS(F) \cap
\Lb_t}(T^*M; \pi_M^{-1}\omega_M)$ does not depend on $t \in I$.
\end{prp}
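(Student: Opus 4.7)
The plan is to encode the one-parameter family $(F,\Phi_t)_{t\in I}$ into a single Lefschetz cycle on $T^*\tl{M}$ with $\tl{M}:=M\times I$, and then to use the Kashiwara-Schapira correspondence of Proposition \ref{prp:2-10} to deduce that its slices at each $t\in I$ must agree.

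Working on $\tl{X}:=X\times I$ (or, to avoid boundary issues, on $X\times J$ for an open interval $J\supset I$ over which the data extend), I would first consider the endomorphism $\tl{\phi}\colon \tl{X}\to\tl{X}$ given by $\tl{\phi}(x,t)=(\phi(x,t),t)$, whose fixed point set is $\tl{M}=M\times I$ and for which $\Gamma_{\tl{\phi}}$ meets $\Delta_{\tl{X}}$ cleanly along $\tl{M}$, both inherited from the corresponding properties for each $\phi_t$. The given $\Phi\colon\phi^{-1}F\to p^{-1}F$ is then exactly a morphism $\tl{\Phi}\colon\tl{\phi}^{-1}\tl{F}\to\tl{F}$ for $\tl{F}:=p^{-1}F\in\Dc(\tl{X})$. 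Applying the construction of Section \ref{sec:4} to $(\tl{F},\tl{\Phi})$ yields a Lefschetz cycle $\tl{LC}:=LC(\tl{F},\tl{\Phi})$ in the family Lefschetz bundle $\tl{\Lb}$ of $\tl{\phi}$, identified via the family analogue of Proposition \ref{prp:4-4} with $T^*\tl{M}\simeq T^*M\times T^*I$.

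Since $\tl{F}=p^{-1}F$ is pulled back from $X$, its micro-support $\SS(\tl{F})$ lies in the zero section of $T^*I$, and by the hypothesis on $\SS(F)\cap\Lb_t$ we obtain $\SS(\tl{F})\cap\tl{\Lb}\subset T^*M\times I\times\{0\}\subset T^*\tl{M}$. Via the isomorphism $CC\colon\CF(\tl{M})_{\Comp}\simto\varGamma(T^*\tl{M};\L_{\tl{M}})$ of Proposition \ref{prp:2-10}, the cycle $\tl{LC}$ corresponds to a constructible function $\tl{\theta}\in\CF(\tl{M})_{\Comp}$ whose characteristic cycle has vanishing conormal component in $T^*I$. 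A direct inspection of $CC$ on conormal cycles of subanalytic subsets then forces $\tl{\theta}$ to be pulled back from $M$, and since $I$ is connected we obtain $\tl{\theta}(x,t)=\theta(x)$ for some $\theta\in\CF(M)_{\Comp}$ independent of $t$.

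Finally, I would identify the slice of $\tl{LC}$ along the natural embedding $T^*M\times\{t\}\times\{0\}\hookrightarrow T^*\tl{M}$ with $LC(F,\Phi_t)$ for each $t\in I$. The closed immersion $j_t\colon X\hookrightarrow\tl{X}$, $x\mapsto(x,t)$, pulls $(\tl{F},\tl{\Phi})$ back to $(F,\Phi_t)$, and each morphism in the chain \eqref{eq:4-6}-\eqref{eq:4-11} defining the Lefschetz cycle commutes with $j_t^{-1}$ by base change; that $j_t$ is non-characteristic for every sheaf involved is precisely guaranteed by the zero-section property of $\SS(\tl{F})$ in $T^*I$. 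Consequently, the slice of $\tl{LC}$ at $t$ equals simultaneously $LC(F,\Phi_t)$ and $CC(\theta)$, and the latter is independent of $t$. The main obstacle is the careful verification of this base-change compatibility for the microlocalization-based construction of the Lefschetz cycle; once it is in place, the result follows by unraveling the family setup and invoking the discreteness built into Proposition \ref{prp:2-10}.
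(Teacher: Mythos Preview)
Your proposal follows essentially the same strategy as the paper, which simply defers to the proof of \cite[Proposition~9.6.8]{K-S}: build a family Lefschetz cycle $\tl{LC}$ for $(\tl{F},\tl{\Phi})$ on $\tl{X}=X\times I$, show that the support of $\tl{LC}$ in $T^*\tl{M}$ is a product $\Lambda_0\times I$ with $\Lambda_0:=\SS(F)\cap\Lb_t$ independent of $t$, and then verify that restricting $\tl{LC}$ to each slice $T^*M\times\{t\}$ recovers $LC(F,\Phi_t)$.

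The one substantive difference is how you handle the ``independence of $t$'' step. In the K-S argument (hence the paper's intended one), once the support is $\Lambda_0\times I$ one simply observes that the restriction maps
\[
H^0_{\Lambda_0\times I}\bigl(T^*M\times I;\pi_{\tl{M}}^{-1}\omega_{\tl{M}}\bigr)\longrightarrow H^0_{\Lambda_0}\bigl(T^*M;\pi_M^{-1}\omega_M\bigr)
\]
at each $t$ are isomorphisms, a one-line consequence of the product structure, the contractibility of $I$, and the compactness of $\supp(F)\cap M$. You instead route this through the isomorphism $CC$ of Proposition~\ref{prp:2-10}, translating $\tl{LC}$ into a constructible function $\tl{\theta}$ and arguing that the vanishing of the $T^*I$-conormal component forces $\tl{\theta}$ to be pulled back from $M$. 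This is correct, but the auxiliary statement ``$\supp CC(\tl{\theta})\subset T^*M\times I\Rightarrow\tl{\theta}$ is pulled back'' itself requires a short justification (e.g.\ via micro-support propagation for the associated constructible sheaf), whereas the direct cohomological isomorphism is immediate. Your detour buys a more conceptual phrasing but at the cost of an extra lemma; the paper's route is shorter.

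Both approaches share the same genuine technical core, which you correctly identify as the main obstacle: checking that the chain \eqref{eq:4-6}--\eqref{eq:4-11} and the identification \eqref{eq:4-21} are compatible with base change along $j_t\colon X\hookrightarrow\tl{X}$, so that the slice of $\tl{LC}$ at $t$ is indeed $LC(F,\Phi_t)$. That verification is where all the work lies in either version.
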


\begin{proof}
The proof proceeds completely in the same way as that of \cite[Proposition
9.6.8]{K-S}. Hence we omit the detail.
\qed
\end{proof}

\section{Microlocal index formula for local contributions}\label{sec:5}

In this section, using the Lefschetz cycle $LC(F,\Phi)$ introduced in
Section \ref{sec:4}, we introduce our microlocal index theorem which
expresses local contributions of $(F,\Phi)$ as intersection numbers of the
images of continuous sections of $\Lb \simeq T^*M$ and $LC(F,\Phi)$. Here we
do not assume that the fixed point set $M$ of $\phi \colon X \longrightarrow
X$ is smooth. However we assume the condition:
\begin{equation}
1 \notin \Ev(\phi^{\prime}_x) \hspace{5mm} \text{for any $x \in M_{\reg}$}.
\end{equation}
Also in this more general setting, we can define the Lefschetz bundle $\Lb
\simeq T^*M_{\reg}$ over $M_{\reg}$ and construct the Lefschetz cycle
$LC(F,\Phi)$ in $\Lb$ by using the methods in Section \ref{sec:4}. Let $M =
\bigsqcup_{i \in I}M_i$ be the decomposition of $M$ into connected
components. Denote $(M_i)_{\reg}$ simply by $N_i$ and set $\Lb_i :=N_i
\utimes{M_{\reg}}\Lb$. Then we get a decomposition $\Lb = \bigsqcup_{i \in
I} \Lb_i \simeq \bigsqcup_{i \in I}T^*N_i$ of $\Lb$. By the direct sum
decomposition
\begin{equation}
H^0_{\SS(F) \cap \Lb}(\Lb; \pi_{M_{\reg}}^{-1}\omega_{M_{\reg}}) \simeq
\bigoplus_{i \in I}H^0_{\SS(F) \cap
\Lb_i}(\Lb_i;\pi_{N_i}^{-1}\omega_{N_i}),
\end{equation}
we obtain a decomposition
\begin{equation}
LC(F,\Phi) =\dsum_{i \in I} LC(F,\Phi)_{M_i}
\end{equation}
of $LC(F,\Phi)$, where $LC(F,\Phi)_{M_i} \in H^0_{\SS(F) \cap
\Lb_i}(\Lb_i;\pi_{N_i}^{-1}\omega_{N_i}) $. Now let us fix a fixed point
component $M_i$ and assume that $\supp(F) \cap M_i$ is compact and contained
in $N_i=(M_i)_{\reg}$. We shall show how the local contribution
$c(F,\Phi)_{M_i}\in \Comp$ of $(F,\Phi)$ from $M_i$ can be expressed by
$LC(F,\Phi)_{M_i}$. In order to state our results, for the sake of
simplicity, we denote $N_i=(M_i)_{\reg}$, $\Lb_i$, $LC(F,\Phi)_{M_i}$,
$c(F,\Phi)_{M_i}$ simply by $M$, $\Lb$, $LC(F,\Phi)$, $c(F,\Phi)$
respectively. Recall that to any continuous section $\sigma \colon M
\longrightarrow \Lb \simeq T^*M$ of the vector bundle $\Lb$, we can
associate a cycle $[\sigma] \in H^0_{\sigma(M)}(T^*M;\pi_M^!(\Comp_M))$
which is the image of $1 \in H^0(M;\Comp_M)$ by the isomorphism
$H^0_{\sigma(M)}(T^*M;\pi_M^!\Comp_M) \simeq H^0(M;(\pi_M\circ
\sigma)^!\Comp_M) \simeq H^0(M;\Comp_M)$ (see \cite[Definition 9.3.5]{K-S}).
If $\sigma(M) \cap \supp (LC(F,\Phi))$ is compact, we can define the
intersection number $\sharp ([\sigma] \cap LC(F,\Phi))$ of $[\sigma]$ and
$LC(F,\Phi)$ to be the image of $[\sigma] \otimes LC(F,\Phi)$ by the chain
of morphisms
\begin{eqnarray}
H^0_{\sigma(M)}(\Lb;\pi_M^!\Comp_M) \otimes H^0_{ \supp
(LC(F,\Phi))}(\Lb;\pi_M^{-1}\omega_M)
&\longrightarrow & H^0_{\sigma(M) \cap \supp (LC(F,\Phi))}
(\Lb;\omega_{\Lb}) \\
& \overset{\int_{\Lb}}{\longrightarrow} & \Comp.
\end{eqnarray}

\begin{thm}\label{thm:5-1}{\rm \bf(\cite[Theorem 4.8]{M-T-3})}
Assume that $\supp (F) \cap M$ is compact. Then for any continuous section
$\sigma \colon M \longrightarrow \Lb \simeq T^*M$ of $\Lb$, we have
\begin{equation}
c(F,\Phi) =\sharp ([\sigma] \cap LC(F,\Phi)).
\end{equation}
\end{thm}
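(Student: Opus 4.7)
The plan is to adapt the proof strategy of the microlocal index theorem for characteristic cycles in Kashiwara-Schapira \cite[Theorem 9.6.10]{K-S} to the Lefschetz setting, leveraging the two cornerstones: homotopy invariance of the intersection pairing in $T^*M$, and the fundamental compatibility $R\pi_{X*}\mu_{\Delta_X} \simeq \delta_X^!$ between microlocalization and the diagonal restriction. The construction of $LC(F,\Phi)$ via the chain \eqref{eq:4-6}--\eqref{eq:4-11} is a microlocal refinement of Kashiwara's construction of $C(F,\Phi)$ via \eqref{eq:2-10}; the goal is to show that applying $R\pi_{X*}$ to the former recovers the latter up to Alexander duality, and that this identification is computed by intersecting with the zero section.

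First I would establish that $\sharp([\sigma] \cap LC(F,\Phi))$ is independent of the continuous section $\sigma \colon M \longrightarrow T^*M$. Because $\supp(F) \cap M$ is compact and $\supp(LC(F,\Phi)) \subset \SS(F) \cap \Lb$ projects via $\pi_M$ into $\supp(F) \cap M$, the linear homotopy $\sigma_t := t \cdot \sigma$ $(t \in [0,1])$ keeps the intersection $\sigma_t(M) \cap \supp(LC(F,\Phi))$ inside a fixed compact set. By the standard stability of intersection numbers for closed conic subanalytic Lagrangian cycles under continuous deformations (see \cite[Proposition 9.3.8]{K-S}), the number $\sharp([\sigma_t] \cap LC(F,\Phi))$ is constant in $t$. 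Hence it suffices to treat $\sigma = 0_M$, the zero section of $T^*M$.

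Next I would identify $\sharp([0_M] \cap LC(F,\Phi))$ with $c(F,\Phi)$. Applying the pushforward $R\pi_{X*}$ to the chain of morphisms \eqref{eq:4-6}--\eqref{eq:4-11} and using $R\pi_{X*}\mu_{\Delta_X} \simeq \delta_X^!\simeq\delta_X^{-1}\RG_{\Delta_X}$ yields exactly the chain defining Kashiwara's characteristic class, so that the image of $LC(F,\Phi)$ under $R\pi_{X*}$ is $C(F,\Phi) \in H^0_{\supp(F) \cap M}(X;\omega_X)$ restricted to the connected component at hand. Using Lemma \ref{lem:4-5} to identify $\mu_{\Delta_X}(h_*\omega_X)|_{\Lb} \simeq \pi_M^{-1}\omega_M$ and the functorial identification of the integration morphism $\int_{T^*M}$ with $\int_M \circ R\pi_{M*}$ on the zero section (which sends $\pi_M^{-1}\omega_M$ paired with $[0_M]$ to $\omega_M$), one gets
\begin{equation}
\sharp([0_M] \cap LC(F,\Phi)) = \dint_X C(F,\Phi)_{M_i} = c(F,\Phi).
\end{equation}
Combined with the homotopy invariance from the previous step, this yields the desired formula.

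The main obstacle will be verifying the precise diagrammatic compatibility in the second step: one must check that the isomorphism \eqref{eq:4-21} is functorial with respect to the chain \eqref{eq:4-6}--\eqref{eq:4-11}, and that pushing forward via $R\pi_{X*}$ really intertwines the microlocal construction of $LC(F,\Phi)$ with the diagonal construction of $C(F,\Phi)$, up to the canonical identification of intersection with the zero section $[0_M]$ as integration along the fibers of $\pi_M$. Once this compatibility diagram is in place, the result follows formally; the subanalyticity of $\SS(F)$ and $\Lb$ ensures all intersection numbers are well-defined, and the compactness hypothesis on $\supp(F) \cap M$ guarantees finiteness throughout.
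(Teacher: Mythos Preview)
The paper does not contain a proof of Theorem~\ref{thm:5-1}; it is quoted verbatim from \cite[Theorem 4.8]{M-T-3} and used as a black box throughout Sections~\ref{sec:5} and~\ref{sec:6}. So there is no ``paper's own proof'' to compare against here.

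That said, your outline is the standard route and is essentially how such microlocal index formulas are established (and almost certainly how it is done in \cite{M-T-3}, given that the authors model their construction on \cite[Chapter~9]{K-S}). The two-step structure---first reduce to the zero section by homotopy invariance of the intersection pairing, then identify intersection with $[0_M]$ as the image of $LC(F,\Phi)$ under $R\pi_{M*}$, which by $R\pi_{X*}\mu_{\Delta_X}\simeq\delta_X^!$ collapses the chain \eqref{eq:4-6}--\eqref{eq:4-11} back to the chain defining $C(F,\Phi)$---is exactly the argument behind \cite[Theorem~9.5.3]{K-S} in the case $\phi=\id_X$. Your compactness check for $\sigma_t(M)\cap\supp(LC(F,\Phi))$ is fine: the support of $LC(F,\Phi)$ is closed conic and sits over the compact set $\supp(F)\cap M$, so its intersection with the graph of any continuous section is closed and bounded. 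One small point: the reference you want for the homotopy invariance of the intersection number is closer to \cite[Corollary~9.5.2]{K-S} than to Proposition~9.3.8, but this is cosmetic. The ``main obstacle'' you flag---the diagrammatic compatibility between \eqref{eq:4-21} and the pushforward---is real but routine; it is the Lefschetz analogue of the commutativity checked in the proof of \cite[Theorem~9.5.3]{K-S}, and no new idea is needed beyond tracking that $h_*\omega_X$ plays the role of $\omega_{\Delta_X}$.
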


As an application of Theorem \ref{thm:5-1}, we shall give a useful formula
which enables us to describe the Lefschetz cycle $LC(F,\Phi)$ explicitly in
the special case where $\phi \colon X \longrightarrow X$ is the identity map
of $X$ and $M=X$. For this purpose, until the end of this section, we shall
consider the situation where $\phi ={\rm id}_X$, $M=X$ and $\Phi \colon F
\longrightarrow F$ is an endomorphism of $F \in \Dc(X)$. In this case,
$LC(F,\Phi)$ is a Lagrangian cycle in $T^*X$. Now for real analytic function
$f \colon Y \longrightarrow I$ on a real analytic manifold $Y$ ($I$ is
an open interval in $\Real$) we define a section $\sigma_{f} \colon Y
\longrightarrow T^*Y$ of $T^*Y$ by $\sigma_{f}(y):=(y ;df(y))\
(y \in Y)$ and set
\begin{equation}
\Lambda_{f} :=\sigma_{f}(Y)=\{ (y ;df(y)) \ |\ y \in Y \}.
\end{equation}
Note that $\Lambda_{f}$ is a Lagrangian submanifold of $T^*Y$ Then we
have the following analogue of \cite[Theorem 9.5.3]{K-S}.

\begin{thm}\label{thm:5-2}
Let $Y$ be a real analytic manifold, $G$ an object of $\Dc(Y)$ and $\Psi
\colon G \longrightarrow G$ an endomorphism of $G$. For a real analytic
function $f \colon Y \longtwoheadrightarrow I$, assume that the
following conditions are satisfied.
\begin{enumerate}
\item $\supp (G) \cap \{ y \in Y\ |\ f(y) \leq t\}$ is compact for any
$t \in I$.
\item $\SS(G) \cap \Lambda_{f}$ is compact.
\end{enumerate}
Then the global trace
\begin{equation}
\tr(G,\Psi)=\dsum_{j\in \Z}(-1)^j \tr \{ H^j(Y;G)
\overset{\Psi}{\longrightarrow}H^j(Y;G)\}
\end{equation}
of $(G,\Psi)$ is equal to $\sharp ([\sigma_{f}] \cap LC(G,\Psi))$.
\end{thm}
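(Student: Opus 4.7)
The plan is to follow the Kashiwara--Schapira argument for the untwisted index theorem \cite[Theorem 9.5.3]{K-S} and adapt it to the $\Psi$-twisted setting via the Lefschetz-cycle machinery of Section \ref{sec:4}: reduce to the compact-support formula of Theorem \ref{thm:5-1} by truncating $G$ at a sufficiently high sublevel set of $f$, and then check that neither the global trace nor the intersection number changes under this truncation.

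First I would use condition (ii) to pick the cutoff. Since $K := \SS(G) \cap \Lambda_{f}$ is compact, its projection to $Y$ is compact and $f$ attains a maximum $M_K$ on it; fix $t_0 \in I$ with $t_0 > M_K$, let $i \colon Z \longhookrightarrow Y$ be the closed inclusion of $Z := \{f \leq t_0\}$, and set $G' := i_* i^{-1} G \in \Dc(Y)$ with the induced endomorphism $\Psi'$. By condition (i), $\supp(G') = \supp(G) \cap Z$ is compact. Next I would apply the non-characteristic deformation lemma \cite[Proposition 2.7.2]{K-S} to the family $\{f \leq s\}_{s \geq t_0}$: the inclusion $\SS(G) \cap \Lambda_{f} \subset T^*\{f < t_0\}$ forces $\RG(\{f \leq s\}; G) \simto \RG(\{f \leq t_0\}; G)$ for every such $s$, and passing to the colimit $s \to \sup I$ yields $\RG(Y; G) \simto \RG(\{f \leq t_0\}; G) = \RG(Y; G')$ compatibly with $\Psi$ and $\Psi'$. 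Hence $\tr(G, \Psi) = \tr(G', \Psi')$, and Theorem \ref{thm:5-1} applied to $(G', \Psi')$ in the case $\phi = \id_Y$, $M = Y$ with the section $\sigma_{f}$ gives $\tr(G', \Psi') = \sharp([\sigma_{f}] \cap LC(G', \Psi'))$.

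It remains to identify this with $\sharp([\sigma_{f}] \cap LC(G, \Psi))$. On the open set $U := \{f < t_0\}$ the sheaves $G$ and $G'$ coincide, so $LC(G, \Psi)$ and $LC(G', \Psi')$ agree on $T^*U$; by the choice of $t_0$, every intersection of $\sigma_{f}$ with $\supp(LC(G, \Psi)) \subset \SS(G)$ lies in $T^*U$, while on $\{f > t_0\}$ one has $G' = 0$. The \emph{main obstacle} is therefore to rule out spurious intersections of $\sigma_{f}$ with $LC(G', \Psi')$ along the hypersurface $\{f = t_0\}$ introduced by the cutoff. I would resolve this by a direction check: writing $Z = \{t_0 - f \geq 0\}$ and applying the standard computation of the singular support of $\Comp_Z$ (\cite[Proposition 5.3.2]{K-S}), one has $\SS(\Comp_Z)_y = \{-\lambda df(y) : \lambda \geq 0\}$ at any $y$ with $f(y) = t_0$, whereas $\sigma_{f}(y) = (y; +df(y))$ points in the opposite direction. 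Consequently $\Lambda_f \cap \SS(G')$ remains confined to $T^*U$, no new intersection with $\sigma_{f}$ arises from the cutoff, and the two intersection numbers coincide.
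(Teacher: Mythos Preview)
Your argument is correct and follows essentially the same strategy as the paper: both reduce to the compact-support situation via the microlocal Morse lemma and then invoke the index formula, the paper by restricting to the open sublevel $\Omega_t=\{f<t\}$ and deferring to \cite[Theorem 9.5.3]{K-S}, you by truncating to the closed sublevel $Z=\{f\le t_0\}$ and applying Theorem~\ref{thm:5-1} directly. Your microsupport check that the cutoff creates no spurious intersection with $\sigma_f$ is the explicit form of what the paper leaves inside the reference to \cite{K-S}; to make it airtight you may note that on $\supp(G)\cap\{f=t_0\}$ one automatically has $df\neq 0$ (otherwise $(y;0)\in\SS(G)\cap\Lambda_f$ with $f(y)=t_0>M_K$), so the boundary is smooth where it matters and the estimate $\SS(\Comp_Z)_y\subset\Real_{\le 0}\,df(y)$ applies.
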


\begin{proof}
Since the fixed point set of $\phi =\id_Y$ is $Y$ itself, $LC(G,\Psi)$ is a
Lagrangian cycle in $T^*Y$. Moreover, since any open subset of $Y$ is
invariant by $\phi=\id_Y$, we can freely use the microlocal Morse lemma
(\cite[Corollary 5.4.19]{K-S}) to reduce the computation of the global trace
$\tr(G,\Psi)$ on $Y$ to that of
\begin{equation}
\dsum_{j\in \Z}(-1)^j \tr \{ H^j(\Omega_t;G)
\overset{\Psi|_{\Omega_t}}{\longrightarrow}H^j(\Omega_t;G)\}
\end{equation}
for sufficiently large $t>0$ in $I$, where we set $\Omega_t:=\{y \in Y \ |\
f(y)<t\}$. Then the proof proceeds essentially in the same way as that
of \cite[Theorem 9.5.3]{K-S}.
\qed
\end{proof}

\begin{thm}\label{thm:5-3}
Let $X$, $F \in \Dc(X)$ and $\Phi \colon F \longrightarrow F$ be as above.
For a real analytic function $f \colon X \longrightarrow \Real$ and a
point $x_0 \in X$, assume the condition
\begin{equation}
\Lambda_{f} \cap \SS(F) \subset \{(x_0;df(x_0))\}.
\end{equation}
Then the intersection number $\sharp ([\sigma_{f}] \cap LC(F,\Phi))$
(at the point $(x_0;df(x_0))\in T^*X$) is equal to
\begin{equation}
\dsum_{j\in \Z}(-1)^j \tr \{ H^j_{\{f \geq f(x_0)\}}(F)_{x_0}
\overset{\Phi}{\longrightarrow}H^j_{\{f \geq f(x_0)\}}(F)_{x_0}
\}.
\end{equation}
\end{thm}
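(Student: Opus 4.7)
The plan is to adapt the proof of Theorem \ref{thm:5-2} by localizing it around the single intersection point $(x_0; df(x_0))$. Since $\sharp([\sigma_{f}] \cap LC(F, \Phi))$ is, by construction, local at each intersection point, one may freely modify $F$ outside a neighborhood of $x_0$ without affecting the local intersection number there.

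Set $c_0 := f(x_0)$. Using the subanalyticity of $\SS(F)$ together with the hypothesis $\Lambda_{f} \cap \SS(F) \subset \{(x_0; df(x_0))\}$, I first pick a small $\varepsilon > 0$ and a relatively compact open neighborhood $W$ of $x_0$ with $\overline{W} \subset f^{-1}((c_0 - \varepsilon, c_0 + \varepsilon))$ chosen so that $\Lambda_f \cap \SS(F)$ over $\overline{W}$ reduces to $\{(x_0; df(x_0))\}$. Then I replace $F$ by $G := F_W = F \otimes \Comp_W$ with the induced endomorphism $\Psi \colon G \longrightarrow G$. Since $G$ coincides with $F$ on a neighborhood of $x_0$ and the Lefschetz cycle is a local construction, $LC(G, \Psi)$ and $LC(F, \Phi)$ agree on a neighborhood of $(x_0; df(x_0))$, so their local intersection numbers with $[\sigma_f]$ at this point are equal.

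Next, Theorem \ref{thm:5-2} applies to $(G, \Psi)$ and $f$: condition (i) holds because $\supp(G) \subset \overline{W}$ is compact, and condition (ii) holds by the choice of $W$. This gives
\[
\sharp([\sigma_{f}] \cap LC(G, \Psi)) = \tr(G, \Psi),
\]
so it remains to identify $\tr(G, \Psi)$ with the desired alternating trace. By the equivariant microlocal Morse lemma \cite[Corollary 5.4.19]{K-S}, for any $t < c_0$ the cohomology $\RG(W \cap \{f < t\}; G)$ vanishes $\Psi$-equivariantly, since there are no points of $\Lambda_f \cap \SS(G)$ in this region. The distinguished triangle
\[
\RG_{W \cap \{f \geq c_0\}}(W; G) \longrightarrow \RG(W; G) \longrightarrow \RG(W \cap \{f < c_0\}; G) \overset{+1}{\longrightarrow}
\]
then produces a $\Psi$-equivariant isomorphism $\RG(W; G) \simeq \RG_{W \cap \{f \geq c_0\}}(W; G)$. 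A further local Morse argument, using that $x_0$ is the only point of $\Lambda_f \cap \SS(G)$, identifies this local cohomology $\Phi$-equivariantly with the stalk $\RG_{\{f \geq c_0\}}(F)_{x_0}$. Taking alternating traces concludes.

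The main obstacle is that every step of the Morse-theoretic reduction must be carried out equivariantly: each isomorphism produced by the microlocal Morse lemma must be checked to be compatible with the induced endomorphism, and additivity of the trace along the final distinguished triangle must be invoked in order to pass from $\tr(G, \Psi)$ to the alternating trace on $\RG_{\{f \geq c_0\}}(F)_{x_0}$. The device of replacing $F$ by $G = F_W$ is what brings Theorem \ref{thm:5-2}, which requires a compact-support hypothesis, to bear on this purely local question.
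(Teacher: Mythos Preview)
Your approach has the right shape but contains a genuine gap in the Morse-theoretic reduction. You assert that ``there are no points of $\Lambda_f \cap \SS(G)$'' over $\{f < c_0\}$, and later that ``$x_0$ is the only point of $\Lambda_f \cap \SS(G)$''. Neither claim is justified: the passage from $F$ to $G = F_W$ introduces new micro-support along $\partial W$, and you have given no argument why $df(x)$ should avoid $\SS(F_W)_x$ for $x \in \partial W$. In general it will not, so $\Lambda_f \cap \SS(G)$ acquires extra points on $\partial W$, some of them in $\{f < c_0\}$ and some in $\{f \ge c_0\}$. Your invocation of the microlocal Morse lemma to force $\RG(W \cap \{f<c_0\};G)$ to vanish therefore fails, and the total intersection number $\sharp([\sigma_f] \cap LC(G,\Psi))$ that Theorem \ref{thm:5-2} computes contains boundary contributions you have not isolated from the contribution at $(x_0;df(x_0))$.

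The paper's proof confronts the same boundary issue but resolves it by subtraction rather than by trying to make the spurious terms disappear. It sets $F_0 = \RG_{B(x_0,\varepsilon)}(F)$ and borrows from the proof of \cite[Theorem 9.5.6]{K-S} the estimate
\[
\Lambda_f \cap \SS(F_0) \subset \pi_X^{-1}\bigl(\{f < c_0 - t\}\bigr) \sqcup \{(x_0;df(x_0))\}
\]
for small $t>0$: all boundary intersections are pushed strictly below level $c_0$. Two intersection numbers are then computed --- the full one on $T^*X$ via Theorem \ref{thm:5-1} (yielding the trace on $\RG(B(x_0,\varepsilon);F)$) and the one over $\pi_X^{-1}(\{f<c_0\})$ via Theorem \ref{thm:5-2} (yielding the trace on $\RG(B(x_0,\varepsilon)\cap\{f<c_0\};F)$). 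Both carry the same boundary junk, so their difference isolates the contribution at $(x_0;df(x_0))$ and equals the alternating trace on $\RG_{\{f\ge c_0\}}(F)_{x_0}$. The missing idea in your argument is precisely this difference trick, together with the $\SS$-estimate that confines the boundary contributions to one side of the critical level.
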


\begin{proof}
The proof is very similar to that of \cite[Theorem 9.5.6]{K-S}. For a
sufficiently small open ball $B(x_0,\e)=\{ x \in X \ |\ |x-x_0|<\e\}$
centered at $x_0$, set $F_0=\RG_{B(x_0,\e)}(F)\in \Dc(X)$. Then $\Phi$
induces a natural morphism $\Phi_0 \colon F_0 \longrightarrow F_0$ in
$\Dc(X)$. Moreover by the proof of \cite[Theorem 9.5.6]{K-S}, we have
\begin{equation}\label{eq:5-13}
\Lambda_{f} \cap \SS(F_0 ) \subset \pi_X^{-1}(\Omega_{-t}) \sqcup
\{(x_0;df(x_0))\}
\end{equation}
for sufficiently small $t >0$, where we set $\Omega_k:=\{x \in X\ |\
f(x) -f(x_0) <k\}$ for $k \in \Real$. Then applying Theorem
\ref{thm:5-2} to the case where $I=(-\infty, 0)$, $Y=\Omega_0$,
$G=F_0|_{\Omega_0} \in \Dc(Y)$ and $\Psi=\Phi_0|_{\Omega_0} \colon G
\longrightarrow G$, we obtain
\begin{eqnarray}
\lefteqn{\sharp ([\sigma_{f}] \cap LC(F_0 ,\Phi_0 ) \cap
\pi_X^{-1}(\Omega_0)) } \nonumber \\
&=& \dsum_{j \in \Z} (-1)^j \tr \{H^j(B(x_0,\e) \cap \Omega_0;F)
\overset{\Phi}{\longrightarrow} H^j(B(x_0,\e) \cap \Omega_0;F)
\}.\label{eq:5-14}
\end{eqnarray}
On the other hand, since $\supp (F_0 )$ is compact in $X$, by Theorem
\ref{thm:5-1} we have
\begin{equation}\label{eq:5-15}
\sharp ([\sigma_{f}] \cap LC(F_0 ,\Phi_0 ))= \dsum_{j \in \Z}
(-1)^j\tr \{H^j(B(x_0,\e) ;F) \overset{\Phi}{\longrightarrow}
H^j(B(x_0,\e);F) \}.
\end{equation}
Comparing \eqref{eq:5-14} with \eqref{eq:5-15} in view of \eqref{eq:5-13},
we see that the intersection number of $[\sigma_{f}]$ and $LC(F_0
,\Phi_0 )$ at $(x_0;df(x_0))$ is equal to
\begin{equation}\label{eq:5-17}
\dsum_{j \in \Z} (-1)^j \tr\{ H^j_{\{ f \geq f(x_0)\}} (F)_{x_0}
\overset{\Phi}{\longrightarrow} H^j_{\{ f \geq f(x_0)
\}}(F)_{x_0} \}.
\end{equation}
Since $LC(F,\Phi)= LC(F_0 ,\Phi_0 )$ in an open neighborhood of
$(x_0;df(x_0))$ in $T^*X$, this last intersection number $\sharp
([\sigma_{f}]\cap LC(F_0, \Phi_0))$ ($=$\eqref{eq:5-17}) is equal to
$\sharp ([\sigma_{f}] \cap LC(F,\Phi))$. This completes the proof.
\qed
\end{proof}

By Theorem \ref{thm:5-3}, we can explicitly describe the Lefschetz cycle
\\$LC(F,\Phi) \in \varGamma(T^*X;\L_X)$ as follows. Let $X=
\bigsqcup_{\alpha \in A}X_{\alpha}$ be a $\mu$-stratification of $X$ such
that
\begin{equation}
\supp(LC(F,\Phi)) \subset \SS(F) \subset \bigsqcup_{\alpha \in A}
T_{X_{\alpha}}^*X.
\end{equation}
Then $\Lambda := \bigsqcup_{\alpha \in A} T_{X_{\alpha}}^*X$ is a closed
conic subanalytic Lagrangian subset of $T^*X$. Moreover there exists an open
dense smooth subanalytic subset $\Lambda_0$ of $\Lambda$ whose decomposition
$\Lambda_0=\bigsqcup_{i \in I}\Lambda_i$ into connected components satisfies
the condition
\begin{equation}\label{eq:5-18}
``\text{For any $i\in I$, there exists $\alpha_i \in A$ such that $\Lambda_i
\subset T_{X_{\alpha_i}}^*X$. }"
\end{equation}

\begin{dfn}\label{dfn:5-4}
For $i \in I$ and $\alpha_i\in A$ as above, we define a complex number $m_i
\in \Comp$ by
\begin{equation}\label{eq:5-19}
m_i :=\dsum_{j \in \Z} (-1)^j \tr\{H^j_{\{f \geq f(x)\}}(F)_x
\overset{\Phi}{\longrightarrow} H^j_{\{f \geq f(x)\}}(F)_x\},
\end{equation}
where the point $x \in \pi_X(\Lambda_i) \subset X_{\alpha_i}$ and the
$\Real$-valued real analytic function $f \colon X \longrightarrow
\Real$ (defined in an open neighborhood of $x$ in $X$) are defined as
follows. Take a point $p \in \Lambda_i$ and set $x=\pi_X(p) \in
X_{\alpha_i}$. Then $f \colon X \longrightarrow \Real$ is a real
analytic function which satisfies the following conditions:
\begin{enumerate}
\item $p=(x;df(x)) \in \Lambda_i$.
\item The Hessian ${\rm Hess} (f|_{X_{\alpha_i}})$ of
$f|_{X_{\alpha_i}}$ is positive definite.
\end{enumerate}
\end{dfn}

\begin{cor}\label{cor:5-5}
In the situation as above, for any $i \in I$ there exists an open
neighborhood $U_i$ of $\Lambda_i$ in $T^*X$ such that
\begin{equation}
LC(F,\Phi)=m_i \cdot [T_{X_{\alpha_i}}^*X]
\end{equation}
in $U_i$.
\end{cor}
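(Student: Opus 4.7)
The plan is to identify $LC(F,\Phi)$ with a multiple of the conormal cycle $[T^*_{X_{\alpha_i}}X]$ on each smooth stratum $\Lambda_i$, and then to pin down the multiplicity by a single transverse intersection computation using Theorem \ref{thm:5-3}. Since $LC(F,\Phi)$ is a closed conic subanalytic Lagrangian cycle whose support lies in $\Lambda=\bigsqcup_{\alpha}T^*_{X_\alpha}X$, and since $\Lambda_i$ is an open subset of the smooth Lagrangian $T^*_{X_{\alpha_i}}X$, there exists an open conic neighborhood $U_i$ of $\Lambda_i$ in $T^*X$ such that $U_i\cap \mathrm{supp}(LC(F,\Phi))\subset T^*_{X_{\alpha_i}}X$ and $LC(F,\Phi)|_{U_i}=c_i\cdot [T^*_{X_{\alpha_i}}X]$ for some constant $c_i\in\Comp$ (this is the general structure of conic subanalytic Lagrangian cycles via the isomorphism of Proposition \ref{prp:2-10}). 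It remains to show $c_i=m_i$.

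To compute $c_i$, I would fix a point $p=(x;df(x))\in \Lambda_i$ together with a real analytic function $f$ satisfying conditions (i) and (ii) of Definition \ref{dfn:5-4}. Shrinking the neighborhood if necessary, by the $\mu$-stratification property and the positive definiteness of $\mathrm{Hess}(f|_{X_{\alpha_i}})$ one verifies that $\Lambda_f\cap \SS(F)\subset \{p\}$ in a neighborhood of $p$: the graph $\Lambda_f=\sigma_f(X)$ meets $T^*_{X_{\alpha_i}}X$ transversally at $p$ (that is the geometric content of (ii), since tangentially the section and the conormal are complementary precisely when the restricted Hessian is nondegenerate), and the other components $T^*_{X_\beta}X$ of $\Lambda$ with $\beta\neq \alpha_i$ can be avoided near $p$ by shrinking $U_i$. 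Moreover the standard orientation convention on conormal bundles and $T^*X$ (as used in \cite[Chapter IX]{K-S}) makes this transverse intersection have sign $+1$, so that $\sharp([\sigma_f]\cap [T^*_{X_{\alpha_i}}X])=1$ at $p$.

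Applying Theorem \ref{thm:5-3} to this $f$ and this point, the intersection number $\sharp([\sigma_f]\cap LC(F,\Phi))$ at $p$ equals
\[
\dsum_{j\in\Z}(-1)^j\tr\{H^j_{\{f\geq f(x)\}}(F)_x \overset{\Phi}{\longrightarrow} H^j_{\{f\geq f(x)\}}(F)_x\}=m_i.
\]
On the other hand, using the local description $LC(F,\Phi)|_{U_i}=c_i\cdot [T^*_{X_{\alpha_i}}X]$ and the transversality computation above, the same intersection number equals $c_i\cdot 1=c_i$. Therefore $c_i=m_i$, which is the desired equality in $U_i$.

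The main obstacle I expect is the careful verification that the number $m_i$ in Definition \ref{dfn:5-4} is independent of the choice of $p\in\Lambda_i$ and of $f$, so that comparing it with the well-defined multiplicity $c_i$ is legitimate. This independence follows by connectedness of $\Lambda_i$ together with the fact that for two admissible pairs $(p,f)$ and $(p',f')$ the corresponding intersection numbers at $p$ and $p'$ both compute the multiplicity of $LC(F,\Phi)$ along the same connected Lagrangian stratum; alternatively it is a purely microlocal statement about the local trace of $(F,\Phi)$ at generic points of $T^*_{X_{\alpha_i}}X$ which is constant along $\Lambda_i$ by the microlocal Morse lemma \cite[Corollary 5.4.19]{K-S} applied to $\RG_{\{f\geq f(x)\}}(F)$. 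Once this is in place, the rest of the argument is the direct combination of Theorem \ref{thm:5-3} with the elementary transverse intersection computation.
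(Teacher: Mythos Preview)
Your proposal is correct and is exactly the argument the paper has in mind: Corollary~\ref{cor:5-5} is stated without proof because it is the immediate combination of the structure theory of conic Lagrangian cycles (locally a constant multiple of a conormal on a smooth connected stratum) with Theorem~\ref{thm:5-3}, which pins down that constant via a single transverse test section~$\sigma_f$. Your discussion of well-definedness of $m_i$ and of the transversality sign is the standard verification from \cite[Chapter~IX]{K-S}, and the minor issue that Theorem~\ref{thm:5-3} is phrased with a global hypothesis on $\Lambda_f\cap\SS(F)$ is handled, as you implicitly do, by localizing $F$ near $x_0$ (exactly as in the proof of Theorem~\ref{thm:5-3} itself).
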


Now let us define a $\Comp$-valued constructible function $\varphi(F,\Phi)$
on $X$ by
\begin{equation}
\varphi(F,\Phi)(x):=\dsum_{j \in \Z} (-1)^j\tr\{ H^j(F)_x
\overset{\Phi|_{\{x\}}}{\longrightarrow} H^j(F)_x\}
\end{equation}
for $x\in X$. We will show that the characteristic cycle
$CC(\varphi(F,\Phi))$ of $\varphi(F,\Phi)$ (see Proposition \ref{prp:2-10})
is equal to the Lefschetz cycle $LC(F,\Phi)$. For this purpose, we need the
following.

\begin{dfn}[\cite{K-S} and \cite{M-T-1}] 
Let $\varphi \colon X \longrightarrow \Z$ be a $\Z$-valued constructible
function on $X$ and $U$ a relatively compact subanalytic open subset in $X$.
We define the topological integral $\dint_U \varphi$ of $\varphi$ over $U$
by 
\begin{equation}
\dint_U \varphi =\dsum_{\alpha \in \Z} c_{\alpha} \cdot
\chi(\RG(U;\Comp_{X_{\alpha}})),
\end{equation}
where $\varphi =\sum_{\alpha \in A} c_{\alpha} \1_{X_{\alpha}}$ ($c_{\alpha}
\in \Z$) is an expression of $\varphi$ with respect to a subanalytic
stratification $X=\bigsqcup_{\alpha \in A} X_{\alpha}$ of $X$.
\end{dfn}

We can extend $\Comp$-linearly this integral $\dint_U \colon \CF(X)
\longrightarrow \Z$ and obtain a $\Comp$-linear map
\begin{equation}
\dint_U \colon \CF(X)_{\Comp} \longrightarrow \Comp.
\end{equation}
On the other hand, since any relatively compact subanalytic open subset $U$
of $X$ is invariant by $\phi=\id_X$, the global trace on $U$
\begin{equation}
\tr(F|_U,\Phi|_U)=\dsum_{j \in \Z}(-1)^j\tr\{H^j(U;F)
\overset{\Phi|_U}{\longrightarrow}H^j(U;F)\}
\end{equation}
is well-defined.

\begin{lem}\label{lem:5-7}
For any relatively compact subanalytic open subset $U$ of $X$, we have
\begin{equation}
\tr(F|_U,\Phi|_U) =\dint_U \varphi(F,\Phi).
\end{equation}
\end{lem}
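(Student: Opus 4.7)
The plan is a devissage on $F$, exploiting the additivity of both sides of the claimed identity in distinguished triangles of pairs $(F,\Phi)$. For any distinguished triangle $F_1 \longrightarrow F_2 \longrightarrow F_3 \longrightarrow F_1[1]$ in $\Dc(X)$ equipped with compatible endomorphisms $\Phi_i \colon F_i \longrightarrow F_i$, the standard alternating-trace lemma applied to the long exact sequence of cohomology groups $H^j(U;F_i)$ gives
$$\tr(F_2|_U,\Phi_2|_U) = \tr(F_1|_U,\Phi_1|_U) + \tr(F_3|_U,\Phi_3|_U),$$
while the same lemma applied stalkwise to the long exact sequence of cohomology sheaves yields the pointwise identity $\varphi(F_2,\Phi_2) = \varphi(F_1,\Phi_1) + \varphi(F_3,\Phi_3)$. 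The $\Comp$-linearity of $\dint_U$ then promotes this to additivity of the right-hand side.

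Next, I fix a subanalytic stratification $X = \bigsqcup_\alpha X_\alpha$ adapted to $F$ and peel off strata using the open-closed distinguished triangle $(j_V)_!F|_V \longrightarrow F \longrightarrow (i_Z)_*F|_Z \longrightarrow (j_V)_!F|_V[1]$ associated to a partition of the strata into an open union $V$ with closed complement $Z$, inducting on the number of strata meeting $\supp(F)$. This reduces the identity to the case $F = (j_\alpha)_!\L$ for $\L$ a bounded complex with locally constant cohomology on a single stratum $X_\alpha$. Truncation triangles $\tau_{\leq n}\L \longrightarrow \L \longrightarrow \tau_{>n}\L \longrightarrow (\tau_{\leq n}\L)[1]$ further cut $\L$ down to a single local system placed in degree zero, and a Jordan-type filtration of $\L$ by generalized-eigenspace sub-local-systems of the monodromy-commuting endomorphism induced by $\Phi$ (the canonical Jordan decomposition commutes with monodromy because $\Phi$ does) further reduces to $F = (j_\alpha)_!\L$ with $\Phi = \lambda\cdot\id_\L$ for some $\lambda \in \Comp$.

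In this base case, $\varphi(F,\Phi) = \lambda \cdot r \cdot \1_{X_\alpha}$ and $\tr(F|_U,\Phi|_U) = \lambda \cdot \chi(R\Gamma(U;(j_\alpha)_!\L))$, where $r$ denotes the rank of $\L$. The identity therefore reduces to the Euler-Poincar\'e formula
$$\chi(R\Gamma(U;(j_\alpha)_!\L)) = r \cdot \chi(R\Gamma(U;\Comp_{X_\alpha}))$$
for a local system on a subanalytic stratum, which I handle by a sub-induction on $r$: peel off rank-one sub-local-systems via short exact sequences and invoke the $\Phi = \id$ version of the additivity established in the first step. The main obstacle is arranging the Euler-Poincar\'e step so that no circularity arises; ordering the total induction lexicographically by the pair (number of strata meeting $\supp(F)$, rank of $\L$) keeps the argument well-founded.
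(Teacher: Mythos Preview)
Your d\'evissage is sound through the reduction to $F=(j_\alpha)_!\L$ with $\L$ a local system on a single stratum and $\Phi=\lambda\cdot\id_\L$; the additivity of both sides in distinguished triangles, the open--closed peeling of strata, truncation, and the Jordan-type filtration all work as you describe. The gap is in your proposed proof of the base-case identity
\[
\chi\bigl(R\Gamma(U;(j_\alpha)_!\L)\bigr)=r\cdot\chi\bigl(R\Gamma(U;\Comp_{X_\alpha})\bigr).
\]
You propose to induct on $r$ by peeling off a rank-one sub-local-system. This is not always possible: a local system on $X_\alpha$ is a representation of $\pi_1(X_\alpha)$, and when $\pi_1(X_\alpha)$ is nonabelian (which happens already for a punctured surface of genus $\geq 2$, a perfectly good subanalytic stratum) there exist irreducible representations of dimension $\geq 2$, hence local systems with no rank-one sub-object. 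Moreover, even if you could reduce to rank one, you would still owe the statement that \emph{every} rank-one local system has the same Euler characteristic as the trivial one, which your induction does not supply.

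The fix is to replace the rank-peeling step by the standard Euler--Poincar\'e identity for constructible sheaves: choose a subanalytic triangulation of $\overline{U}$ adapted to $X_\alpha$ and to $\L$, and compute $R\Gamma(U;(j_\alpha)_!\L)$ by the associated simplicial complex; each term has dimension $r$ times the number of contributing simplices, so the alternating sum is $r$ times that for $\Comp_{X_\alpha}$. This is in fact the heart of the argument the paper is citing (it omits the proof entirely, referring to Goresky--MacPherson \cite[Proposition~11.6]{G-M-1}, where the computation is carried out directly via a stratification without the preliminary d\'evissage). Once you grant this identity, your reduction is complete; but note that the triangulation argument already proves Lemma~\ref{lem:5-7} in one stroke, so your d\'evissage, while correct, is a longer route to the same destination.
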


The proof of this lemma being completely similar to that of
\cite[Proposition 11.6]{G-M-1}, we omit the proof.

\begin{thm}\label{thm:5-8}
In the situation $\phi=\id_X$, $\Phi \colon F \longrightarrow F$ etc. as
above, we have the equality
\begin{equation}
LC(F,\Phi)=CC(\varphi(F,\Phi))
\end{equation}
as Lagrangian cycles in $T^*X$.
\end{thm}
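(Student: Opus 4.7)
The plan is to identify the two Lagrangian cycles by comparing their multiplicities on every top-dimensional smooth piece of a common conic subanalytic Lagrangian stratification of $T^*X$, via the isomorphism $CC$ of Proposition \ref{prp:2-10}.

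First I fix a $\mu$-stratification $X = \bigsqcup_{\alpha \in A} X_\alpha$ adapted to $F$, so that $\SS(F) \subset \Lambda := \bigsqcup_\alpha T^*_{X_\alpha}X$. Since each $H^j(F)|_{X_\alpha}$ is locally constant and $\Phi$ preserves it, the function $\varphi(F,\Phi)$ is constant on each $X_\alpha$ with some value $c_\alpha \in \Comp$; in particular $\supp(CC(\varphi(F,\Phi))) \subset \Lambda$ as well. Let $\Lambda_0 = \bigsqcup_{i \in I} \Lambda_i$ be the open dense smooth part of $\Lambda$ satisfying \eqref{eq:5-18}, with $\Lambda_i \subset T^*_{X_{\alpha_i}}X$. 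Since $LC(F,\Phi)$ and $CC(\varphi(F,\Phi))$ are both closed conic subanalytic Lagrangian cycles in $T^*X$, it suffices to show that their multiplicities along each $\Lambda_i$ coincide.

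For $LC(F,\Phi)$, Corollary \ref{cor:5-5} (which itself rests on Theorem \ref{thm:5-3}) gives multiplicity $m_i$ along $\Lambda_i$, computed by the Morse-theoretic formula \eqref{eq:5-19} at any chosen $p = (x; df(x)) \in \Lambda_i$ satisfying the conditions of Definition \ref{dfn:5-4}. For $CC(\varphi(F,\Phi))$ the strategy is analogous: I invoke the classical microlocal index theorem \cite[Theorem 9.5.6]{K-S}, which says that for any $\Real$-constructible sheaf $G$ the multiplicity of $CC(G)$ along $\Lambda_i$ at $p$ equals $\chi(R\Gamma_{\{f \geq f(x)\}}(G)_x)$. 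Applied to $\1_{X_\alpha}$ and combined by linearity with $\varphi(F,\Phi) = \sum_\alpha c_\alpha \1_{X_\alpha}$, the multiplicity of $CC(\varphi(F,\Phi))$ at $p$ becomes $\sum_\alpha c_\alpha \, \chi(R\Gamma_{\{f \geq f(x)\}}(\1_{X_\alpha})_x)$.

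The main obstacle is to verify that this sum equals $m_i$. I would argue it by introducing the stratification filtration of $F$ on a small ball $B(x,\e)$ (with successive graded pieces locally constant sheaves supported on single strata), observing that $\Phi$ induces compatible endomorphisms on each graded piece whose fibrewise trace on the stratum $X_\alpha$ is $c_\alpha$, and then invoking the additivity of the trace along the resulting spectral sequence for $R\Gamma_{\{f \geq f(x)\}}$. This yields
\begin{equation*}
\dsum_{j \in \Z} (-1)^j \tr\{\Phi \mid H^j_{\{f \geq f(x)\}}(F)_x\} \; = \; \sum_\alpha c_\alpha \, \chi(R\Gamma_{\{f \geq f(x)\}}(\1_{X_\alpha})_x),
\end{equation*}
whose left-hand side is precisely $m_i$ by Definition \ref{dfn:5-4}. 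This is the equivariant analogue of the Euler-characteristic identity underpinning the non-equivariant Morse formula, with the trace of $\Phi$ taking the role of dimension; once in hand, it delivers the desired equality of multiplicities and hence of cycles.
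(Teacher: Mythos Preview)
Your proof is correct and follows the same overall architecture as the paper: fix a $\mu$-stratification, reduce to comparing multiplicities on each generic piece $\Lambda_i \subset T^*_{X_{\alpha_i}}X$, and read off the $LC$-multiplicity $m_i$ via Corollary~\ref{cor:5-5}. The difference lies only in the matching step. You compute the $CC$-multiplicity by writing $\varphi(F,\Phi)=\sum_\alpha c_\alpha \1_{X_\alpha}$, applying \cite[Theorem~9.5.6]{K-S} to each $\1_{X_\alpha}$, and then proving the equivariant Morse identity $m_i=\sum_\alpha c_\alpha\,\chi(\RG_{\{f\ge f(x)\}}(\Comp_{X_\alpha})_x)$ by filtering $F$ along the skeleta and using additivity of traces on the associated spectral sequence. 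The paper instead rewrites $m_i$ as a difference of global traces $\tr(F|_U,\Phi|_U)-\tr(F|_V,\Phi|_V)$ for $U$ a small ball and $V=U\cap\{f<f(x)\}$, invokes Lemma~\ref{lem:5-7} (the Goresky--MacPherson identity $\tr(F|_U,\Phi|_U)=\int_U\varphi(F,\Phi)$) to convert each term to a topological integral, and recognizes $\int_U\varphi-\int_V\varphi$ directly as the $CC$-coefficient. Your filtration argument is essentially an inline reproof of the local case of Lemma~\ref{lem:5-7}; the paper's route is shorter because that lemma is already on the shelf, while yours has the merit of making the mechanism (trace additivity along a stratification filtration) fully explicit.
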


\begin{proof}
Let $X=\bigsqcup_{\alpha \in A}X_{\alpha}$ be a $\mu$-stratification of $X$
such that
\begin{equation}
\supp (LC(F,\Phi)), \hspace{3mm}\supp(CC(\varphi(F,\Phi))) \subset \Lambda =
\bigsqcup_{\alpha \in A}T_{X_{\alpha}}^*X.
\end{equation}
Take an open dense smooth subanalytic subset $\Lambda_0 $ of $\Lambda$ whose
decomposition $\Lambda= \bigsqcup_{i \in I}\Lambda_i$ into connected
components satisfies the condition \eqref{eq:5-18}. Let us fix $\Lambda_i$
and $X_{\alpha_i}$ such that $\Lambda_i \subset T_{X_{\alpha_i}}^*X$. It is
enough to show that $LC(F,\Phi)$ and $CC(\varphi(F,\Phi))$ coincide in an
open neighborhood of $\Lambda_i$ in $T^*X$. By Corollary \ref{cor:5-5}, in
an open neighborhood $U_i$ of $\Lambda_i$ in $T^*X$ we have
\begin{equation}
LC(F,\Phi)=m_i \cdot [T_{X_{\alpha_i}}^*X],
\end{equation}
where $m_i \in \Comp$ is defined by \eqref{eq:5-19} for $p \in \Lambda_i$,
$x=\pi_X(p)\in X_{\alpha_i}$, $f \colon X \longrightarrow \Real$ as in
Definition \ref{dfn:5-4}. Let $U$ be a sufficiently small open ball in $X$
centered at $x \in X_{\alpha_i}$. Set $V:=U \cap \{ f <f(x)\}$.
Then we have
\begin{eqnarray}
m_i
&=& \dsum_{j \in \Z}(-1)^j\tr\{H^j_{\{f \geq f(x)\}}(U;F)
\overset{\Phi}{\longrightarrow} H^j_{\{f \geq f(x)\}}(U;F) \} \\
&=& \tr(F|_U,\Phi|_U)-\tr(F|_V,\Phi|_V) \\
&=& \dint_U \varphi(F,\Phi) -\dint_V\varphi(F,\Phi).
\end{eqnarray}
This last number coincides with the coefficient of
$[T_{X_{\alpha_i}}^*X]|_{U_i}$ in $CC(\varphi(F,\Phi))|_{U_i}$. This
completes the proof. \qed
\end{proof}

\section{Hyperbolic localization and Lefschetz cycles}\label{sec:6}

In this section, we explicitly describe 
the Lefschetz cycle $LC(F,\Phi)$
introduced in Section \ref{sec:4} in terms of 
hyperbolic localizations of the specializations 
of $F$. Let $M$ be a possibly
singular fixed point component of $\phi \colon X \longrightarrow X$.
Throughout this section, we assume the conditions $\supp (F) \cap M 
\subset M_{\reg}$ and 
\begin{equation}
``\text{$1\notin \Ev(\phi^{\prime}_x)$ for 
any $x \in \supp(F) \cap M_{\reg}$.}"
\end{equation}
Then there exists an open neighborhood $U$ of $\supp(F) \cap M_{\reg}$ in
$M_{\reg}$ such that $\Gamma_{\phi}$ intersects with $\Delta_X$ cleanly
along $U \subset M \subset \Gamma_{\phi} \cap \Delta_X$. Namely, there
exists a Lefschetz bundle $\Lb=U \utimes{M}\{T^*_{\Gamma_{\phi}}(X\times X)
\cap T^*_{\Delta_X}(X\times X)\}$ over $U$ which is isomorphic to $T^*U$. As
in the same way as in Section \ref{sec:4}, we can define a Lagrangian cycle
in $\Lb$ associated with $(F,\Phi)$. We still denote it by $LC(F,\Phi)$ and
want to describe it explicitly. Replacing $X$, $M$ etc. by $X\setminus (M
\setminus U)$, $U$ etc. respectively, we may assume that $M$ is smooth and
$1 \notin \Ev(\phi^{\prime}_x)$ for any 
$x \in M$ from the first. In this situation, the
fixed point set of $\phi^{\prime} \colon T_MX \longrightarrow T_MX$ is the
zero-section $M$. Let $\Gamma_{\phi^{\prime}}=\{(\phi^{\prime}(p),p) \ |\ p
\in T_MX\}\subset T_MX \times T_MX$ be the graph of $\phi^{\prime}$ and
$\Delta_{T_MX}\simeq T_MX$ the diagonal subset of $T_MX \times T_MX$. Then
\begin{equation}
\Lb^{\prime}:=T^*_{\Gamma_{\phi^{\prime}}}(T_MX\times T_MX) \cap
T^*_{\Delta_{T_MX}}(T_MX \times T_MX)
\end{equation}
is a vector bundle over the zero-section $M \simeq \Gamma_{\phi^{\prime}}
\cap \Delta_{T_MX}$ of $T_MX$. Since $\Lb^{\prime}$ is also isomorphic to
$T^*M$ by our assumptions, we shall identify it with the original Lefschetz
bundle $\Lb=T^*_{\Gamma_{\phi}}(X \times X) \cap T^*_{\Delta_X}(X\times X)$.
Now consider the natural morphism
\begin{equation}
\Phi^{\prime} \colon (\phi^{\prime})^{-1}\nu_M(F) \longrightarrow \nu_M(F)
\end{equation}
induced by $\Phi \colon \phi^{-1}F \longrightarrow F$. Then from the pair
$(\nu_M(F), \Phi^{\prime})$, we can construct the Lefschetz cycle
$LC(\nu_M(F),\Phi^{\prime})$ in $\Lb' \simeq \Lb$.

\begin{prp}\label{prp:6-1}
In $\Lb \simeq \Lb^{\prime}$, we have
\begin{equation}
LC(F,\Phi) =LC(\nu_M(F),\Phi^{\prime}).
\end{equation}
\end{prp}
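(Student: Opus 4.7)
The plan is to deduce the equality from the compatibility between microlocalization along $\Delta_X$ and specialization along $M$, by tracing the element $\id_F$ through the defining chain of morphisms \eqref{eq:4-6}--\eqref{eq:4-11}.

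First, I would establish that the two Lefschetz bundles $\Lb$ and $\Lb'$ are canonically identified over $M$ (in a way compatible with the isomorphisms to $T^*M$ supplied by Proposition \ref{prp:4-4}). Since $\phi'$ is defined as the bundle endomorphism induced by $\phi$ on $T_MX$, the tangent spaces of $\Gamma_\phi$ and $\Gamma_{\phi'}$ at corresponding points of $M$ (viewed inside $T(X\times X)$ and $T(T_MX\times T_MX)$ respectively) match under the natural identifications, so that $T^*_{\Gamma_\phi}(X\times X)|_M = T^*_{\Gamma_{\phi'}}(T_MX\times T_MX)|_M$; the same holds verbatim for the diagonals, giving the identification $\Lb \simeq \Lb'$ used in the statement.

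Second, I would invoke the standard compatibility between $\mu_{\Delta_X}$ and $\nu_{M\times M}$ (cf.\ \cite[Ch.\ IV]{K-S}), together with the Sato--Kashiwara isomorphism $\nu_{M\times M}(F\boxtimes \D F) \simeq \nu_M(F)\boxtimes \D\nu_M(F)$. The point is that each functorial step in \eqref{eq:4-6}--\eqref{eq:4-11} has a perfect analogue on $T_MX\times T_MX$: the closed embedding $h=(\phi,\id_X)$ specializes to $h'=(\phi',\id_{T_MX})$; the morphism $\Phi$ specializes to $\Phi'$; and $\omega_X$ specializes to $\omega_{T_MX}$ under $\nu_M$ up to the shift absorbed in the identifications of dualizing complexes. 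Thus the two chains of morphisms defining $LC(F,\Phi)$ and $LC(\nu_M(F),\Phi')$ are intertwined by $\nu$, and since $\nu$ sends $\id_F$ to $\id_{\nu_M(F)}$, the two images coincide as sections of $\pi_M^{-1}\omega_M$ on $\Lb\simeq\Lb'$.

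Third, one concludes by the microlocal Morse lemma argument used already in \cite[Ch.\ IX]{K-S} to check that the identification of microlocal data in a neighborhood of $\Lb$ is enough to identify the Lagrangian cycles globally, since both $LC(F,\Phi)$ and $LC(\nu_M(F),\Phi')$ are supported in $\Lb$ by Lemma \ref{lem:4-5}.

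The main obstacle will be Step 2: one must verify in detail that $\mu_{\Delta_X}$ commutes with $\nu_{M\times M}$ at the relevant point of $T^*_{\Delta_X}(X\times X)$, paying careful attention to the clean (but not transverse) intersection of $\Delta_X$ and $M\times M$ in $X\times X$ and to the orientation data and degree shifts that arise when transporting $h_*\omega_X$ through the specialization. Once this compatibility is in place, the remainder of the argument is a routine naturality check on the chain \eqref{eq:4-6}--\eqref{eq:4-11}.
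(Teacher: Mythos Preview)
Your overall strategy---building a commutative diagram that intertwines the defining chains \eqref{eq:4-6}--\eqref{eq:4-11} for $(F,\Phi)$ and for $(\nu_M(F),\Phi')$ through specialization---is exactly the paper's approach. However, the paper's proof consists almost entirely of the technical content you postpone to ``Step 2'', and your sketch underestimates what is involved there.

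Concretely, the paper does not invoke a ready-made compatibility between $\mu_{\Delta_X}$ and $\nu_{M\times M}$. Instead it constructs the comparison morphism ${\bf A}$ by hand using the normal deformation $\tl{(X\times X)_{M\times M}}$ and a chain of functorial morphisms (with one step requiring \cite[Theorem 4.3.2 and Proposition 3.3.9]{K-S} to be an isomorphism), and then performs an explicit coordinate computation to show that the resulting support set $S_3$ coincides with the normal cone $S'=C_{T^*_MX}(\SS(F))$ under the identification $T^*(T_MX)\simeq T_{T^*_MX}(T^*X)$. This support verification is the heart of the argument and is not a formality. Also, what you call the ``Sato--Kashiwara isomorphism'' $\nu_{M\times M}(F\boxtimes\D F)\simeq \nu_M(F)\boxtimes\D\nu_M(F)$ is used in the paper only as a comparison \emph{morphism} (the arrows in the diagram go from the $\boxtimes$ side into the $\nu_{M\times M}$ side), not as an isomorphism; your phrasing suggests a stronger statement than is needed or available.

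Finally, your Step 3 is misplaced: once the big commutative diagram is established, the equality $LC(F,\Phi)=LC(\nu_M(F),\Phi')$ follows immediately by chasing $\id_F\mapsto\id_{\nu_M(F)}$ through it. No microlocal Morse lemma is needed, and there is no ``local versus global'' issue to resolve---both cycles are by construction sections over $\Lb\simeq\Lb'$.
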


\begin{proof}
First, we briefly recall the proof of Proposition \ref{prp:3-3} which is
similar to that of \cite[Proposition 9.6.11]{K-S}. Since the construction of
the characteristic class $C(F,\Phi)_M \in H_{\supp(F) \cap M}^0(X;\omega_X)$
is local around $\supp(F) \cap M$ (see \cite[Remark 9.6.7]{K-S}) and $X
\setminus (M \setminus U)$ is invariant by $\phi$, we may replace $X$, $M$
etc. by $X\setminus (M \setminus U)$, $U$ etc. respectively. Then the proof
follows from the commutativity of the diagram \eqref{diag:3-6} below. Here
we denote $T_MX$ simply by $\Vb$ and the morphism $\tl{h} \colon T_MX
\longrightarrow T_MX \times T_MX$ is defined by
$\tl{h}=(\phi^{\prime},\id)$. We also used the natural isomorphism $\D
\nu_M(F) \simeq \nu_M(\D F)$. Let us explain the construction of the
morphism ${\bf A}$ in the diagram \eqref{diag:3-6}. Consider the commutative
diagram:

\begin{equation}
\xymatrix@R=3mm@C=5mm{
T_{M\times M}(X\times X) \ar@{^{(}->}[rr]^{s_1} & & \tl{(X\times X)_{M\times
M}} & & \Omega_{X\times X} \ar@{_{(}->}[ll]_{j_1} \ar@{->>}[rr]^{\tl{p_1}}&
& X\times X \\
& \Box & & \Box & & \Box & \\
T_MX \ar@{^{(}->}[rr]^{s} \ar@{^{(}->}[uu]^{\delta_{T_MX}} & & \tl{X_M}
\ar@{^{(}->}[uu]^{\tl{\delta^{\prime}}} & & \Omega_X \ar@{_{(}->}[ll]_{j}
\ar@{->>}[rr]^{\tl{p}} \ar@{^{(}->}[uu]^{\tl{\delta}} & & X,
\ar@{^{(}->}[uu]^{\delta_X}}
\end{equation}
where $\tl{(X\times X)_{M \times M}}$ is the normal deformation of $X\times
X$ along $M\times M$ and \\$t_1 \colon \tl{(X\times X)_{M\times M}}
\longrightarrow \Real$ is the deformation parameter such that
$\Omega_{X\times X}$ is defined by $t_1 > 0$ in $\tl{(X\times X)_{M \times
M}}$. Then the morphism ${\bf A}$ is constructed by the morphisms of
functors
\begin{eqnarray}
\delta_X^!
&\longrightarrow & \delta_X^!R\tl{p_1}_*\tl{p_1}^{-1}
\simeq R\tl{p}_* \tl{\delta}^!\tl{p_1}^{-1}
\simeq Rp_* \tl{\delta^{\prime}}^! Rj_{1*}\tl{p_1}^{-1} \nonumber\\
&\longrightarrow & Rp_* \tl{\delta^{\prime}}^!
s_{1*}s_1^{-1}Rj_{1*}\tl{p_1}^{-1}
\simeq  Rp_*s_* \delta_{T_MX}^!s_1^{-1}Rj_{1*}\tl{p_1}^{-1}.
\end{eqnarray}
The other horizontal arrows in the diagram \eqref{diag:3-6} are constructed
similarly.
{\footnotesize
\begin{equation}\label{diag:3-6}
\xymatrix@R=8mm@C=4mm{
R{\rm Hom}(F,F) \ar[rr] & & R{\rm Hom}(\nu_M(F),\nu_M(F)) \\
\RG_{\Delta_X}(X\times X;F \boxtimes \D F) \ar[u]^{\wr}
\ar[r]^{\hspace*{-5mm}{\bf A}} \ar[dd] & \RG_{\Delta_{\Vb}}(\Vb\times
\Vb;\nu_{M\times M}(F \boxtimes \D F)) \ar[d]& \RG_{\Delta_{\Vb}}(\Vb\times
\Vb;\nu_M(F) \boxtimes \D \nu_M(F)) \ar[l] \ar[d] \ar[u]^{\wr}\\
& \RG_{M}(\Vb;\tl{h}^{-1}\nu_{M\times M}(F \boxtimes \D F)) \ar[d] &
\RG_{M}(\Vb;\phi^{\prime -1}\nu_M(F) \otimes \D \nu_M(F)) \ar[l] \ar[d] \\
\RG_M(X; \phi^{-1}F \otimes \D F) \ar[d]^{\Phi} \ar[r]^{\sim} & \RG_M(\Vb;
\nu_M(\phi^{-1}F \otimes \D F)) \ar[d]^{\Phi} & \RG_M(\Vb;\nu_M(\phi^{-1}F)
\otimes \D \nu_M(F)) \ar[l] \ar[d]^{\Phi} \\
\RG_M(X;F\otimes \D F) \ar[d] \ar[r]^{\sim} & \RG_M(\Vb;\nu_M(F\otimes \D
F)) \ar[d] & \RG_M(\Vb;\nu_M(F) \otimes \D \nu_M(F)) \ar[l] \ar[d] \\
\RG_M(X;\omega_X) \ar[d] \ar[r]^{\sim} & \RG_M(\Vb;\nu_M(\omega_X)) &
\RG_M(\Vb;\omega_{\Vb}) \ar[d] \ar@{-}[l]_{\sim}\\
\Comp \ar@{=}[rr]& & \Comp. }
\end{equation}}

Now the proof of Proposition \ref{prp:6-1} follows from the commutativity of
Diagram \ref{prp:6-1}.a below, which is a microlocal version of Diagram
\eqref{diag:3-6}. Here we denote $T_MX$, $\SS(F)$ and $C_{T_M^*X}(\SS(F))$
by $\Vb$, $S$ and $S^{\prime}$ respectively. Note that we have natural
isomorphisms
\begin{equation}
T^*(T_MX)  \simeq  T^*(T^*_MX)  \simeq T_{T_M^*X}(T^*X)
\end{equation}
(see \cite[(6.2.3)]{K-S} and \eqref{eq:new} below) and the normal cone
$S^{\prime}=C_{T_M^*X}(\SS(F))$ can be considered as a subset of
$T^*(T_MX)=T^*\Vb$. We also used a conic isotropic subset $S^{\prime
\prime}=(S \cap \Lb) \cup (S^{\prime} \cap \Lb^{\prime})$ of $\Lb \simeq
\Lb^{\prime} \simeq T^*M$ and the morphism $\tl{h} \colon T_MX
\longrightarrow T_MX \times T_MX$ is defined by $\tl{h}=
(\phi^{\prime},\id_{T_MX})$. Moreover we used the natural isomorphism $\D
\nu_M(F) \simeq \nu_M(\D F)$ to obtain 
Diagram \ref{prp:6-1}.a. Let us explain the
construction of the morphism ${\bf A}$ in Diagram \ref{prp:6-1}.a. 
First consider the
commutative diagram:
\begin{equation}
\xymatrix@R=3mm@C=5mm{
T_{M\times M}(X\times X) \ar@{^{(}->}[rr]^{s_1} & & \tl{(X\times X)_{M\times
M}} & & \Omega_{X\times X} \ar@{_{(}->}[ll]_{j_1} \ar@{->>}[rr]^{\tl{p_1}}&
& X\times X \\
& \Box & & \Box & & \Box & \\
T_MX \ar@{^{(}->}[rr]^{s} \ar@{^{(}->}[uu]^{\delta_{T_MX}} & & \tl{X_M}
\ar@{^{(}->}[uu]^{\tl{\delta^{\prime}}} & & \Omega_X \ar@{_{(}->}[ll]_{j}
\ar@{->>}[rr]^{\tl{p}} \ar@{^{(}->}[uu]^{\tl{\delta}} & & X
\ar@{^{(}->}[uu]^{\delta_X}}
\end{equation}
which already appeared in the proof of Proposition \ref{prp:3-3}. Denote the
image of $\tl{\delta^{\prime}}$ (resp.\ $\tl{\delta}$) by $\Delta_{\tl{X_M}}$
(resp.\ $\Delta_{\Omega_X}$). Then we see that the following morphisms are
isomorphisms.
\begin{eqnarray}
\t{\tl{p_1}} &\colon &\Delta_{\Omega_X} \utimes{\Delta_X}
T_{\Delta_X}^*(X\times X) \longrightarrow
T_{\Delta_{\Omega_X}}^*\Omega_{X\times X}, \\
\t{j_1} &\colon &\Delta_{\Omega_X}
\utimes{\Delta_{\tl{X_M}}}T_{\Delta_{\tl{X_M}}}^*(\tl{(X\times X)_{M\times
M}}) \longrightarrow T_{\Delta_{\Omega_X}}^*\Omega_{X\times X}, \\
\t{s_1} &\colon &\Delta_{T_MX}
\utimes{\Delta_{\tl{X_M}}}T_{\Delta_{\tl{X_M}}}^*(\tl{(X\times X)_{M\times
M}}) \longrightarrow T_{\Delta_{T_MX}}^*(T_{M\times M}(X\times X)).
\end{eqnarray}

\newpage
\begin{center}{
\rotatebox[origin=c]{90}{
$
\xymatrix@R=10mm@C=10mm{
R{\rm Hom}(F,F) \ar[rr] & & R{\rm Hom}(\nu_M(F),\nu_M(F)) \\
\RG_{S}(T^*X;\mu_{\Delta_X}(F \boxtimes \D F)) \ar@{-}[u]^{\wr} \ar[r]^{{\bf
A}} \ar[dd] & \RG_{S^{\prime}}(T^*\Vb;\mu_{\Delta_{\Vb}}(\nu_{M\times M}(F
\boxtimes \D F))) \ar[d]&
\RG_{S^{\prime}}(T^*\Vb;\mu_{\Delta_{\Vb}}(\nu_M(F) \boxtimes \D \nu_M(F)))
\ar[l] \ar[d] \ar@{-}[u]^{\wr}\\
& \RG_{S^{\prime}}(T^*\Vb;\mu_{\Delta_{\Vb}}(\tl{h}_*\tl{h}^{-1}\nu_{M\times
M}(F \boxtimes \D F))) \ar[d]&
\RG_{S^{\prime}}(T^*\Vb;\mu_{\Delta_{\Vb}}(\tl{h}_*(\phi^{\prime -1}\nu_M(F)
\otimes \D \nu_M(F)))) \ar[l] \ar[d] \\
\RG_{S}(T^*X; \mu_{\Delta_X}(h_*(\phi^{-1}F \otimes \D F))  \ar[d]^{\Phi}
\ar[r]^{{\bf B}} & \RG_{S^{\prime}}(T^*\Vb;\mu_{\Delta_{\Vb}}(\tl{h}_*
\nu_M(\phi^{-1}F \otimes \D F))) \ar[d]^{\Phi} &
\RG_{S^{\prime}}(T^*\Vb;\mu_{\Delta_{\Vb}}(\tl{h}_* (\nu_M(\phi^{-1}F)
\otimes \D\nu_M(F)))) \ar[l] \ar[d]^{\Phi} \\
\RG_{S}(T^*X;\mu_{\Delta_X}(h_*(F\otimes \D F))) \ar[d] \ar[r] &
\RG_{S^{\prime}}(T^*\Vb;\mu_{\Delta_{\Vb}}(\tl{h}_*\nu_M(F\otimes \D F))
\ar[d] & \RG_{S^{\prime}}(T^*\Vb;\mu_{\Delta_{\Vb}}(\tl{h}_*(\nu_M(F)
\otimes \D \nu_M(F)))) \ar[l] \ar[d] \\
\RG_{S}(T^*X;\mu_{\Delta_X}(h_*\omega_X)) \ar[r] \ar[d] &
\RG_{S^{\prime}}(T^*\Vb;\mu_{\Delta_{\Vb}}(\tl{h}_*\nu_M(\omega_X))) \ar[d]
&  \RG_{S^{\prime}}(T^*\Vb;\mu_{\Delta_{\Vb}}(\tl{h}_*\omega_{\Vb}))
\ar@{-}[l]^{\sim} \ar[d] \\
\RG_{S^{\prime \prime}}(\Lb; \pi_M^{-1}\omega_M) \ar@{=}[r]& \RG_{S^{\prime
\prime}}(\Lb;\pi_M^{-1}\omega_M) \ar@{=}[r] & \RG_{S^{\prime
\prime}}(\Lb;\pi_M^{-1}\omega_M).}$}}

\bigskip
Diagram \ref{prp:6-1}.a
\end{center}

\newpage
Now let us set
\begin{eqnarray}
S_1&:=&\t{\tl{p_1}}(\Delta_{\Omega_X}\utimes{\Delta_X}S), \\
S_2&:=&\overline{\t{j_1}^{-1}(S_1)}, \\
S_3&:=&S_2 \cap T_{\Delta_{T_MX}}^*(T_{M\times M}(X\times X)).
\end{eqnarray}
Then we have the following morphisms
\begin{eqnarray}
\lefteqn{\RG_{\SS(F)} (T_{\Delta_X}^*(X\times X); \mu_{\Delta_X}(F \boxtimes
\D F))} \nonumber \\
&\longrightarrow & \RG_{\SS(F)}(T_{\Delta_X}^*(X\times X);
\mu_{\Delta_X}(R\tl{p_1}_*\tl{p_1}^{-1}(F \boxtimes \D F))) \\
&\longrightarrow & \RG_{S_1}(T_{\Delta_{\Omega_X}}^*\Omega_{X\times X};
\mu_{\Delta_{\Omega_X}}(\tl{p_1}^{-1}(F \boxtimes \D F))) \\
\label{eq:6-11}&\simot& \RG_{S_2}(T_{\Delta_{\tl{X_M}}}^*(\tl{(X\times
X)_{M\times M}});\mu_{\Delta_{\tl{X_M}}}(Rj_{1*}\tl{p_1}^{-1}(F \boxtimes \D
F)))\\
&\longrightarrow & \RG_{S_2}(T_{\Delta_{\tl{X_M}}}^*(\tl{(X\times
X)_{M\times M}});\mu_{\Delta_{\tl{X_M}}}
(s_{1*}s_1^{-1}Rj_{1*}\tl{p_1}^{-1}(F \boxtimes \D F))) \\
&\longrightarrow & \RG_{S_3}(T_{\Delta_{T_MX}}^*(T_{M\times M}(X\times
X));\mu_{\Delta_{T_MX}}(s_1^{-1}Rj_{1*}\tl{p_1}^{-1}(F \boxtimes \D F)))\\
& =& \RG_{S_3}(T^* \Vb ;\mu_{\Delta_{\Vb}}(\nu_{M \times M}((F \boxtimes \D
F)))),
\end{eqnarray}
where we used \cite[Theorem 4.3.2 and Proposition 3.3.9]{K-S} (see also the
arguments in \cite[page 192-193]{K-S}) to prove that the morphism
\eqref{eq:6-11} is an isomorphism. Let us show that $S_3$ is equal to
$S^{\prime}$. Let $(x^{\prime},x^{\prime \prime})$ be a local coordinate
system of $X$ such that $M=\{ x^{\prime}=0\}$ and $(x^{\prime},x^{\prime
\prime}; \xi^{\prime},\xi^{\prime \prime})$ the associated coordinates of
$T^*X$. Then by the Hamiltonian isomorphism etc., we can naturally identify
$T^*(T_MX) \simeq T_{\Delta_{T_MX}}^* (T_{M\times M}(X\times X))$ with
$T_{T_M^*X}(T^*X)$ as follows (see \cite[(6.2.3)]{K-S}).
\begin{equation}\label{eq:new}
\begin{array}{ccccc}
T^*(T_MX) & \simeq & T^*(T^*_MX) & \simeq & T_{T_M^*X}(T^*X). \\
\inun & & \inun & & \inun \\
(x^{\prime},x^{\prime \prime};\xi^{\prime}, \xi^{\prime \prime}) &
\longleftrightarrow & (\xi^{\prime},x^{\prime
\prime};-x^{\prime},\xi^{\prime \prime}) & \longleftrightarrow &
(x^{\prime},x^{\prime \prime};\xi^{\prime}, \xi^{\prime \prime})
\end{array}
\end{equation}
Under this identification, we can prove that $S_3  \subset T^*(T_MX) \simeq
T_{\Delta_{T_MX}}^*(T_{M\times M}(X\times X)) $ is equal to the normal cone
$S^{\prime}= C_{T^*_MX}(\SS(F)) \subset T_{T^*_MX}(T^*X)$ as follows. In the
associated local coordinates $(x^{\prime}, x^{\prime \prime},t;\xi^{\prime},
\xi^{\prime \prime})$ ($t>0$) of $\Delta_{\Omega_X}
\utimes{\Delta_{\tl{X_M}}} T_{\Delta_{\tl{X_M}}}^*(\tl{(X\times X)_{M\times
M}}) \ (\simeq \Delta_{\Omega_X} \utimes{\Delta_X} T_{\Delta_X}^*(X\times X)
\simeq \Omega_X \utimes{X}T^*X)$, its subset
$\t{j_1}^{-1}\t{\tl{p_1}}(\Delta_{\Omega_X} \utimes{\Delta_X}S)$ is
expressed by
\begin{equation}
\{ (x^{\prime}, x^{\prime \prime},t;\xi^{\prime}, \xi^{\prime \prime}) \in
\Delta_{\Omega_X}
\utimes{\Delta_{\tl{X_M}}}T_{\Delta_{\tl{X_M}}}^*(\tl{(X\times X)_{M\times
M}}) \ | \ (tx^{\prime}, x^{\prime \prime}; t^{-1}\xi^{\prime}, \xi^{\prime
\prime}) \in \SS(F) \}.
\end{equation}
Hence we have
\begin{eqnarray}
\lefteqn{(x^{\prime},x^{\prime \prime};\xi^{\prime},\xi^{\prime \prime}) \in
S_3=S_2 \cap T_{\Delta_{T_MX}}^*(T_{M\times M}(X\times X))} \nonumber \\
&\Longleftrightarrow & \exists (x_n^{\prime}, x_n^{\prime \prime},t_n
;\xi_n^{\prime}, \xi_n^{\prime \prime}) \in \Delta_{\Omega_X}
\utimes{\Delta_{\tl{X_M}}}T_{\Delta_{\tl{X_M}}}^*(\tl{(X\times X)_{M\times
M}}) \nonumber \\
& & \hspace{10mm} \text{s.t.\ } \begin{cases} (x_n^{\prime}, x_n^{\prime
\prime},t_n ;\xi_n^{\prime}, \xi_n^{\prime \prime}) \overset{n \to
\infty}{\longrightarrow} (x^{\prime},x^{\prime
\prime},0;\xi^{\prime},\xi^{\prime \prime}), \\ (t_nx_n^{\prime},
x_n^{\prime \prime}; t_n^{-1}\xi_n^{\prime}, \xi_n^{\prime \prime}) \in
\SS(F) \end{cases}\\
&\Longleftrightarrow & \exists (x_n^{\prime}, x_n^{\prime \prime},t_n
;\xi_n^{\prime}, \xi_n^{\prime \prime}) \in \Delta_{\Omega_X}
\utimes{\Delta_{\tl{X_M}}}T_{\Delta_{\tl{X_M}}}^*(\tl{(X\times X)_{M\times
M}}) \nonumber \\
& & \hspace{10mm} \text{s.t.\ } \begin{cases} (x_n^{\prime}, x_n^{\prime
\prime},t_n ;\xi_n^{\prime}, \xi_n^{\prime \prime}) \overset{n \to
\infty}{\longrightarrow} (x^{\prime},x^{\prime
\prime},0;\xi^{\prime},\xi^{\prime \prime}), \\ (t_nx_n^{\prime},
x_n^{\prime \prime}; \xi_n^{\prime}, t_n\xi_n^{\prime \prime}) \in \SS(F)
\end{cases}
\end{eqnarray}

\begin{eqnarray}
&\Longleftrightarrow & \exists ((\tl{x_n}^{\prime}, \tl{x_n}^{\prime
\prime};\tl{\xi_n}^{\prime},\tl{\xi_n}^{\prime \prime}), c_n) \in \SS(F)
\times \Real_{>0} \nonumber \\
& & \hspace{10mm}\text{s.t.\ }\begin{cases} (\tl{x_n}^{\prime},
\tl{x_n}^{\prime \prime};\tl{\xi_n}^{\prime},\tl{\xi_n}^{\prime \prime})
\overset{n \to \infty}{\longrightarrow} (0,x^{\prime \prime};\xi^{\prime},
0), \\
(c_n\tl{x_n}^{\prime},c_n\tl{\xi_n}^{\prime \prime}) \overset{n \to
\infty}{\longrightarrow} (x^{\prime}, \xi^{\prime \prime}) \end{cases} \\
&\Longleftrightarrow & (x^{\prime},x^{\prime
\prime};\xi^{\prime},\xi^{\prime \prime}) \in S^{\prime}= C_{T^*_MX}(\SS(F))
\subset T_{T^*_MX}T^*X.
\end{eqnarray}
We thus obtained the morphism ${\bf A}$:
\begin{equation}
\RG_{S}(T^*X; \mu_{\Delta_X}(F \boxtimes \D F)) \longrightarrow
\RG_{S^{\prime}}(T^*\Vb ;\mu_{\Delta_{\Vb}} (\nu_{M\times M}(F\boxtimes \D
F))).
\end{equation}
We can construct also the morphism ${\bf B}$ in 
Diagram \ref{prp:6-1}.a as follows.
\begin{eqnarray}
\lefteqn{\RG_S(T^*X; \mu_{\Delta_X}(h_*(\phi^{-1}F \otimes \D F)))}
\nonumber \\
&\longrightarrow& \RG_{S^{\prime}}(T^*\Vb ; \mu_{\Delta_{\Vb}}(\nu_{M \times
M}(h_*(\phi^{-1}F \otimes \D F)))) \\
&\longrightarrow& \RG_{S^{\prime}}(T^*\Vb ;\mu_{\Delta_{\Vb}}(\tl{h}_*
\nu_M(\phi^{-1}F \otimes \D F))),
\end{eqnarray}
where the first morphism is constructed in the same way as ${\bf A}$ and we
used \cite[Proposition 4.2.4]{K-S} to construct the second morphism. This
completes the proof. \qed
\end{proof}

In what follows, we shall identify $\Lb \simeq
\Lb^{\prime}$ with $T^*M$ and describe
$LC(F,\Phi)=LC(\nu_M(F),\Phi^{\prime})$.
Since our result holds for any conic object on any vector bundle over $M$, 
let us consider the following general setting. Let $\tau \colon \Vb 
\longtwoheadrightarrow M$ be a real vector bundle 
of rank $r>0$ over $M$ and $\psi \colon 
\Vb \longrightarrow \Vb$ its endomorphism. Assume 
that the fixed point set of $\psi$ is the zero-section $M$ of $\Vb$. 
This assumption implies that 
\begin{align}
1 \notin \Ev(\psi_x) 
\text{ for any $x \in M$.} \label{cd:6-1}
\end{align}
Suppose that we are given a conic $\Real$-constructible 
object $G \in \Dc(\Vb)$ on $\Vb$ and a morphism $\Psi \colon \psi^{-1}G 
\longrightarrow G$ in $\Dc(\Vb)$. From these data, we can construct the 
Lefschetz bundle $\Lb_0 \simeq T^*M$ associated with $\psi$ and the 
Lefschetz cycle $LC(G,\Psi)$ in it.

\smallskip

Fix a point $\overset{\circ}{x} \in M$ and 
consider the linear homomorphism 
$\psi_{\overset{\circ}{x}} \colon \Vb_{\overset{\circ}{x}}
 \longrightarrow \Vb_{\overset{\circ}{x}}$.
Let $\lambda_1,\dots ,\lambda_d$ be the eigenvalues of 
$\psi_{\overset{\circ}{x}}$ on $\Real_{\ge 1}$ 
and $\lambda_{d+1},\dots ,\lambda_r$ the remaining ones. 
Since these eigenvalues vary 
depending on $x \in M$ continuously, we denote 
their continuous extensions to a 
neighborhood of $\overset{\circ}{x}$ in $M$ by 
$\lambda_1(x),\dots \lambda_r(x)$. 
Then for a sufficiently large 
$R>0$ we have $\lambda_1,\dots,\lambda_d \in [1,R]$. 
Moreover there exists a sufficiently small $\e>0$ such that 
\begin{align}
\lambda_{d+1},\dots,\lambda_r \not\in
\{ z \in \Comp \ | \ \Re z \ge 1, |\Im z| \le \e \}.
\end{align}
By the continuity of the eigenvalues, 
there exists a sufficiently small 
neighborhood $U$ of $\overset{\circ}{x}$ in $M$ such that 
\begin{align}
\lambda_{d+1}(x),\dots, \lambda_r(x) & \not\in 
\{z \in \Comp \ | \ \Re z \ge 1, |\Im z|\le \e \}, \\
\lambda_1(x),\dots, \lambda_d(x) & \in 
\{z \in \Comp \ | \ 1 < \Re z < R+1, 
 |\Im z| < \e \} 
\end{align}
for any $x \in U$.
If necessary, replacing 
$U$ by a smaller one, we may assume also that $\Vb$ 
is trivial on $U$. For $x \in U$ we set 
\begin{align}
P_x= \frac{1}{2 \pi i} 
\int_{C}(z- \psi_x)^{-1} dz,
\end{align}
where $C$ is the path on 
the boundary of the set 
$\{z \in \Comp \ | \ 1 < 
 \Re z < R+1, |\Im z| < \e \} 
\subset \Comp$. 
Then $P_x \colon \Vb_x \longrightarrow \Vb_x$ is 
the projector onto the direct sum of 
the generalized eigenspaces associated with the eigenvalues in 
$\{z \in \Comp \ | \ 1 < \Re z <  R+1, 
|\Im z| < \e \} \subset \Comp$. 
The family $\{P_x \}_{x \in U}$ 
defines an endomorphism 
$P$ of $\Vb|_U$, whose image $\MW \subset \Vb|_U$ 
is a subbundle of $\Vb|_U$.

\begin{dfn}\label{dfn:6-1}
We call $\MW:=\Im P \subset \Vb|_U$  
the minimal expanding subbundle of $\Vb|_U$ 
(on the neighborhood $U$ of $\overset{\circ}{x} \in M$).
\end{dfn}

\begin{dfn}\label{dfn:6-2}{\bf (\cite[Section 9.6]{K-S})} 
We say that a subbundle $\ME$ of $\Vb|_U$ is an expanding 
subbundle if it satisfies the following conditions:
\begin{enumerate}
\item $\psi|_{\tau^{-1}(U)}(\ME) \subset \ME$.
\item $\MW$ is a subbundle of $\ME$.
\item $\ME_x^\Comp \subset 
\bigoplus_{\lambda \not\in [0,1]}(\Vb_x)_\lambda^\Comp$ 
for any $x \in U$.
\end{enumerate}
\end{dfn}

For any expanding subbundle $\ME$ of $\Vb|_U$, the induced morphism 
$\psi|_{\ME} \colon \ME \longrightarrow \ME$ is an 
isomorphism of vector bundles.

\begin{dfn}\label{dfn:6-3}(\cite{Braden}) 
Let $\tau_{\ME} \colon \ME \longrightarrow U$ be an 
expanding subbundle of $\Vb|_U$ and 
$i_\ME \colon U \longhookrightarrow \ME$ its zero-section.
We define an object $G_\ME^{!-1} \in \Dc(U)$ by 
\begin{equation}
G_\ME^{!-1}:=i_\ME^!(G|_\ME) \simeq R{\tau_\ME}_!(G|_\ME)
\end{equation}
and its endomorphism $\Psi_\ME^{!-1}: 
G_\ME^{!-1} \longrightarrow G_\ME^{!-1}$ by 
the composite of the morphisms 
\begin{eqnarray}
R{\tau_\ME}_!(G|_\ME) 
& \longrightarrow & 
R{\tau_\ME}_!\tl{\psi}_*\tl{\psi}^{-1}(G|_\ME) \\
& \simeq & 
R{\tau_\ME}_!\tl{\psi}_!((\psi^{-1}G)|_\ME) 
\simeq R{\tau_\ME}_! ((\psi^{-1}G)|_\ME) 
 \\
& \overset{\Psi}{\longrightarrow} &
R{\tau_\ME}_!(G|_\ME).
\end{eqnarray}
Here we set $\tl{\psi}:=\psi|_{\ME}$ and the 
first morphism above is induced by 
the adjunction.
We call the pair $(G_\ME^{!-1},\Psi_\ME^{!-1})$ 
the hyperbolic localization 
of $(G,\Psi)$ with respect to $\ME$.
\end{dfn}

\begin{prp}\label{prp:6-3}
Let $\overset{\circ}{x} \in M$ 
be a point of $M$. Then there exists a 
sufficiently small open neighborhood $U$ of $\overset{\circ}{x}$ 
in $M$ such that for any compact subanalytic subset $K$ of $U$ and 
for any expanding subbundle $\ME$ of $\Vb|_U$, we have
\begin{align}
\int_{\tl{U}}C(G_{\tl{K}}|_{\tl{U}},\Psi_{\tl{K}}|_{\tl{U}})
=\tr((G_{\tl{K}})_\ME^{!-1},(\Psi_{\tl{K}})_\ME^{!-1}).
\end{align}
Here we set $\tl{U}:=\tau^{-1}(U)$ 
and $\tl{K}:=\tau^{-1}(K)$.
\end{prp}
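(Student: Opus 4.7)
The plan is to reinterpret both sides as integrals of characteristic classes and to connect them by a two-step reduction using an invariant decomposition of $\Vb|_U$ into an expanding and a contracting direction. For the right-hand side, since $(G_{\tl K})_\ME^{!-1} = R{\tau_\ME}_!(G_{\tl K}|_\ME) \in \Dc(U)$ has support contained in the compact set $K \subset U$, Theorem \ref{thm:2-3} applied with $\phi = \id_U$ rewrites it as
\[
\tr((G_{\tl K})_\ME^{!-1},(\Psi_{\tl K})_\ME^{!-1}) = \int_U C((G_{\tl K})_\ME^{!-1},(\Psi_{\tl K})_\ME^{!-1}),
\]
so the statement reduces to equating two characteristic class integrals, one on $\tl U$ with endomorphism $\psi|_{\tl U}$ and one on $U$ with the identity endomorphism.

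After shrinking $U$, the continuity of the generalized eigenspace decomposition of $\psi_x$ yields a $\psi$-invariant smooth direct sum $\Vb|_U = \ME \oplus \mathcal{N}$, where $\mathcal{N}$ is spanned by the generalized eigenspaces of $\psi_x$ for the eigenvalues outside the strip $\{z \in \Comp \ | \ \Re z \geq 1, \ |\Im z| \leq \e\}$; in particular $\Ev(\psi|_{\mathcal{N}_x}) \cap [1,+\infty) = \emptyset$ for every $x \in U$. I would then interpolate $\psi$ through the family $\psi_s = \psi|_\ME \oplus (1-s)\psi|_\mathcal{N}$ for $s \in [0,1]$, with a compatible family $\Psi_s$ induced from $\Psi$ via the conic structure of $G$. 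The fixed-point set of each $\psi_s$ is the zero section $U$, because $(1-s)^{-1} \in [1,+\infty)$ is never an eigenvalue of $\psi|_\mathcal{N}$; hence Proposition \ref{prp:4-9} applies and $LC(G_{\tl K}|_{\tl U}, \Psi_s|_{\tl U})$ is independent of $s$. By Theorem \ref{thm:5-1} the LHS is therefore unchanged by the deformation.

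At $s = 1$, $\psi_1 = \psi|_\ME \oplus 0$ factors as $i_\ME \circ \psi|_\ME \circ p_\ME$, where $p_\ME \colon \Vb|_U \longrightarrow \ME$ is the projection and $i_\ME \colon \ME \longhookrightarrow \Vb|_U$ is the inclusion of the zero section in the $\mathcal{N}$-direction. The functoriality of characteristic classes under such factorizations — a Fubini-type reduction along the $\mathcal{N}$-fibers that exploits the conicity of $G$ — collapses the LHS to $\int_\ME C(G_{\tl K}|_\ME, \Psi|_\ME)$. Finally, on $\ME$ the projection $\tau_\ME$ is $(\psi|_\ME, \id_U)$-equivariant and $(G_{\tl K})_\ME^{!-1} = R{\tau_\ME}_!(G_{\tl K}|_\ME)$ by construction; the Lefschetz-trace functoriality of $R{\tau_\ME}_!$ for conic $\Real$-constructible sheaves — a real-constructible incarnation of Braden's hyperbolic localization — identifies this integral with $\int_U C((G_{\tl K})_\ME^{!-1}, (\Psi_{\tl K})_\ME^{!-1})$, completing the chain.

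The main obstacle will be the collapse step at $s = 1$: justifying the Fubini-type reduction along $\mathcal{N}$ rigorously requires a careful verification of the microlocal hypotheses of Proposition \ref{prp:4-9} (in particular, the stability of $\SS(G_{\tl K}) \cap \Lb_s$ along the homotopy, which follows from the strip avoidance condition on $\Ev(\psi|_\mathcal{N})$) together with the conicity of $G$ in the fiber direction. Theorem \ref{thm:3-4} cannot be invoked directly for the full endomorphism $\psi$ because $\Ev(\psi_x) \cap \Real_{>1} \neq \emptyset$ on $\ME$; the deformation precisely isolates the $\mathcal{N}$-direction where the contracting hypothesis of Theorem \ref{thm:3-4} is available, at the cost of requiring the more flexible homotopy invariance of Proposition \ref{prp:4-9}.
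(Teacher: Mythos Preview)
Your deformation strategy has two gaps that block the argument. First, the $\psi$-invariant splitting $\Vb|_U = \ME \oplus \mathcal{N}$ need not exist: Definition~\ref{dfn:6-2} does not force $\ME$ to be a sum of full generalized eigenspaces, and for instance if $\psi_x$ has a $2\times 2$ Jordan block at the eigenvalue $-1$ together with a simple eigenvalue $2$, then $\ME := \MW \oplus (\text{the $(-1)$-eigenline})$ is a valid expanding subbundle with no $\psi$-invariant complement. (Your description of $\mathcal{N}$ is the complement of $\MW$, not of an arbitrary $\ME$.) Second, and more seriously, even when such $\mathcal{N}$ exists the lift $\Psi_s \colon \psi_s^{-1}G \to G$ does not come from conicness: writing $\psi_s = D_s \circ \psi$ with $D_s = \id_\ME \oplus (1-s)\id_{\mathcal{N}}$, you would need a morphism $D_s^{-1}G \to G$, but $G$ is conic only for the \emph{diagonal} $\Real_{>0}$-action on $\Vb$, not for the partial scaling $D_s$. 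A ray $L \subset \Vb_x$ contained in neither summand already gives a counterexample with $G = \Comp_L$.

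The argument of \cite[Proposition~9.6.12]{K-S}, to which the paper defers, sidesteps both problems. The only homotopy used is the diagonal one $t\psi$ (for which conicness does furnish the lift), and after that one does not deform $\psi$ further but instead restricts to a half-open box $Z = \{|v_+|<a,\ |v_-|\le b\}$ in a metric adapted to the contracting/expanding splitting: since $\psi^{-1}(Z)\cap Z$ is open in $Z$ and closed in $\psi^{-1}(Z)$, the morphism $\Psi_Z \colon \psi^{-1}(G_Z) \to G_Z$ is obtained by restricting $\Psi$ itself, with no new lift required. Local invariance of characteristic classes then passes from $(G,\Psi)$ to $(G_Z,\Psi_Z)$, and the reduction to the hyperbolic localization on $\ME$ proceeds via the contracting dynamics on the complementary direction rather than through a Fubini-type collapse at a degenerate endpoint.
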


The proof of this proposition is completely 
similar to that of \cite[Proposition 9.6.12]{K-S} 
and we omit it here. 
By the isomorphism 
$(G_{\tl{K}})_\ME^{!-1} \simeq (G_\ME^{!-1})_K$, 
we thus obtain the equality
\begin{align}\label{ADDEE} 
\int_{\tl{U}}C(G_{\tl{K}}|_{\tl{U}},\Psi_{\tl{K}}|_{\tl{U}})
=\tr((G_\ME^{!-1})_K,(\Psi_\ME^{!-1})_K).
\end{align}

Take a sufficiently small open subset $U$ of $M$ 
for which Proposition \ref{prp:6-3} holds 
and define a constructible 
function $\varphi(G_\ME^{!-1},\Psi_\ME^{!-1})$ on it 
associated to the hyperbolic localization 
$(G_\ME^{!-1},\Psi_\ME^{!-1})$ by 
\begin{equation}
\varphi(G_\ME^{!-1},\Psi_\ME^{!-1})
(x):=\dsum_{j \in \Z} (-1)^j\tr\{ H^j (G_\ME^{!-1})_x
\overset{ \Psi_\ME^{!-1}
|_{\{x\}}}{\longrightarrow} H^j(G_\ME^{!-1} 
)_x\}. 
\end{equation}
Then by applying \eqref{ADDEE} to 
the special case where $K$ is a point, we find that 
it does not depend on the choice of 
the expanding subbundle $\ME$. 
Hence we can glue such locally defined 
constructible functions to obtain a global 
one $\varphi_M(G,\Psi)$ on $M$.

\begin{prp}\label{prp:6-2}
Under the condition \eqref{cd:6-1} we have the equality 
\begin{align}\label{LEQ} 
LC(G,\Psi)=CC(\varphi_M(G,\Psi))
\end{align}
as Lagrangian cycles in $T^*M$.
\end{prp}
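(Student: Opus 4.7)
The identity is local on $M$, so I fix a point $\overset{\circ}{x}\in M$ and work in an open neighborhood $U\subset M$ on which an expanding subbundle $\ME\subset \Vb|_U$ exists. My plan is to reduce the general assertion to the identity-endomorphism case already handled by Theorem \ref{thm:5-8}, via hyperbolic localization. Specifically, I will first establish the local equality of Lagrangian cycles
\begin{equation*}
LC(G,\Psi)|_{T^*U} \;=\; LC(G_\ME^{!-1},\Psi_\ME^{!-1}) \qquad (\star)
\end{equation*}
in $T^*U$. Once $(\star)$ is in hand, Theorem \ref{thm:5-8} applied to $(G_\ME^{!-1},\Psi_\ME^{!-1})$ on $U$ (viewed with $\phi=\id_U$) gives $LC(G_\ME^{!-1},\Psi_\ME^{!-1})=CC(\varphi(G_\ME^{!-1},\Psi_\ME^{!-1}))$, and the stalkwise definition of $\varphi_M(G,\Psi)$ shows that $\varphi(G_\ME^{!-1},\Psi_\ME^{!-1})=\varphi_M(G,\Psi)|_U$. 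Gluing over a cover of $M$ by such neighborhoods then yields the desired global equality.

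For $(\star)$, both sides are closed conic subanalytic Lagrangian cycles on $T^*U$, hence determined by their multiplicities along the smooth top-dimensional pieces of a $\mu$-stratification of $U$ compatible with both supports. Fix such a piece $\Lambda_i\subset T^*_{U_{\alpha_i}}U$ and a generic $p=(x_0;df(x_0))\in\Lambda_i$, where $f$ is a real analytic function on $U$ chosen, in the spirit of Definition \ref{dfn:5-4} and the proof of Theorem \ref{thm:5-3}, so that $\sigma_f$ meets the supports of both cycles only at $p$. I will extract the multiplicity at $\Lambda_i$ of each side as an intersection number with $[\sigma_f]$ after passing to a compactly-supported localization: for a small closed subanalytic neighborhood $K\subset U$ of $x_0$, I cut off to $(G_{\tilde K},\Psi_{\tilde K})$ on $\Vb$ (with $\tilde K=\tau^{-1}(K)$) and to $((G_\ME^{!-1})_K,(\Psi_\ME^{!-1})_K)$ on $U$.

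The relation between the two localized invariants is provided by Proposition \ref{prp:6-3}:
\begin{equation*}
c(G_{\tilde K},\Psi_{\tilde K}) \;=\; \tr((G_\ME^{!-1})_K,(\Psi_\ME^{!-1})_K).
\end{equation*}
By the microlocal index formula (Theorem \ref{thm:5-1}), combined with the Morse-theoretic argument of Theorem \ref{thm:5-3} to localize the intersection to a neighborhood of $p$, each side can be re-expressed as the intersection multiplicity of $[\sigma_f]$ with the corresponding Lefschetz cycle at $p$. Letting $K$ shrink, this yields equality of multiplicities at every such generic $p$, hence $(\star)$.

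The principal obstacle is executing this Morse localization cleanly: one must ensure that for suitably generic $f$ and small $K$ the cut-off Lefschetz cycles $LC(G_{\tilde K},\Psi_{\tilde K})$ and $LC((G_\ME^{!-1})_K,(\Psi_\ME^{!-1})_K)$ are concentrated near the single prescribed direction $p$ in $T^*U$. This amounts to a microlocal refinement of the support statement implicit in Proposition \ref{prp:6-1} (namely $\SS(G)\cap\Lb\subset C_{T_M^*X}(\SS(F))$), together with an analogous control of $\SS(G_\ME^{!-1})$. Once this compatibility is verified, the pointwise hyperbolic-localization formula of Proposition \ref{prp:6-3} can be patched into the global Lagrangian-cycle identity $(\star)$, which is precisely the improvement over \cite{G-M-1} emphasized in the Introduction.
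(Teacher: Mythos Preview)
Your overall plan matches the paper's: reduce to Theorem \ref{thm:5-8} via the local identity $(\star)$, and establish $(\star)$ by comparing multiplicities along a generic $\Lambda_i$ using Proposition \ref{prp:6-3} together with Theorem \ref{thm:5-1}. The gap is in how you extract the \emph{local} multiplicity at $p$. From Proposition \ref{prp:6-3} and Theorem \ref{thm:5-1} you obtain an equality of \emph{total} intersection numbers $\sharp([\sigma_f]\cap LC(G_{\tl{K}},\Psi_{\tl{K}}))=\sharp([\sigma_f]\cap LC((G_\ME^{!-1})_K,(\Psi_\ME^{!-1})_K))$, but each side carries extra contributions supported over $\partial K$. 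Your proposal to ``let $K$ shrink'' does not separate these from the term at $p$: as $K$ shrinks to $x_0$ the boundary contributions move into $\pi_M^{-1}(x_0)$ rather than disappearing. The Morse argument of Theorem \ref{thm:5-3} is available only on the $\id_U$-side (for $G_\ME^{!-1}$); there is no direct analogue for $LC(G,\Psi)$ since $\psi\neq\id$, so the left-hand localization is unjustified as written.

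The paper fills this gap with three ingredients your sketch omits. First, it invokes the homotopy invariance of Lefschetz cycles (Proposition \ref{prp:4-9}) to replace $\psi$ by $t\psi$ with $|1-t|\ll 1$, so that no eigenvalue lies on the unit circle; this yields a $\psi$-invariant splitting $\Vb|_U=\Vb_+\oplus\Vb_-$ with strict expansion/contraction. Second, it introduces the fiberwise-bounded ``box'' $Z=\{|v_+|<a,\,|v_-|\le b\}$ and the cut-off $(G_Z,\Psi_Z)$; the bounded fiber support is what makes the subsequent microsupport estimates tractable (compare Remark \ref{rem:3-6}). Third, instead of a single $K$, the paper uses the pair $V=B(x_0,\delta)$ and $W=V\cap\{f<0\}$: Lemma \ref{lem:6-7} (via Proposition \ref{prp:6-3} and the non-characteristic deformation lemma) identifies the traces of $G_{\Vb_+}^{!-1}$ on $V,W$ with those of $G_Z$ on $\tl{V},\tl{W}$, and then microlocal Bertini--Sard estimates on the supports of $LC(\RG_{\tl{V}}(G_Z),\ldots)$ and $LC(\RG_{\tl{W_\e}}(G_Z),\ldots)$ (with Lemma \ref{lem:6-3}) show that the \emph{difference} of the two total intersection numbers isolates exactly $[\sigma_f]\underset{p_0}{\cdot}LC(G_Z,\Psi_Z)=[\sigma_f]\underset{p_0}{\cdot}LC(G,\Psi)$. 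These constructions---the homotopy perturbation, the $G_Z$ cut-off, and the $V$/$W$ subtraction---are precisely the ``execution of the Morse localization'' you flag as the principal obstacle.
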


\begin{proof}
Let $\pi_M \colon T^*M \longrightarrow M$ be the projection. 
Fix a point $\overset{\circ}{x} \in M$ and compare 
the both sides of \eqref{LEQ} on a neighborhood of 
$\pi_M^{-1}(\overset{\circ}{x})  \subset T^*M$. 
By the homotopy invariance of Lefschetz cycles 
(see Proposition \ref{prp:4-9}), 
\cite[Proposition 9.6.8]{K-S} and \eqref{ADDEE}, 
taking a sufficiently small open neighborhood $U$ 
of $\overset{\circ}{x}$ and replacing $\psi$ 
with $t\psi$ for $|1-t| \ll 1$, we may assume 
the following conditions:
\begin{itemize}
 \item[(1)] $\Vb|_U$ is trivial.
 \item[(2)] The open subset $U$ satisfies the 
condition of Proposition \ref{prp:6-3}.
 \item[(3)] $\Ev( \psi_x) \cap \{z \in 
\Comp \ | \ |z|=1\}=\emptyset$ for any $x \in U$.
\end{itemize}
It is enough to show that 
\begin{align}
LC(G|_{\tl{U}},\Psi|_{\tl{U}})
=CC(\varphi_M(G,\Psi)|_U),
\end{align}
where $\tl{U}=\tau^{-1}(U)$. 
As in the proof of \cite[Proposition 9.6.12]{K-S} 
we can construct subbundles 
$\Vb_+$ and $\Vb_-$ of $\Vb|_U$ for which 
we have the direct sum decomposition 
$\Vb|_U =\Vb_+ \oplus \Vb_-$ 
and a metric on $\Vb|_U$ such that
\begin{align}
 & \text{there exist constants } c_1, 
c_2 \text{ with } 0<c_1<1<c_2 \text{ satisfying the 
condition } \notag \\
 & | \psi_{x}(v_-)| \le c_1|v_-| \, 
(v_- \in \Vb_{-,x}), \; | \psi_{x}(v_+)| \ge c_2|v_+| 
\, (v_+ \in \Vb_{+,x}) \text{ for any } x \in U .
\end{align} 
By using this metric, we set 
\begin{align}
Z:=\{ (x,v_+,v_-) \in \Vb|_U \ | \ |v_+| < a, |v_-| \le b \},
\end{align}
for some fixed constants $a,b>0$.
Then $\psi^{-1}(Z) \cap Z$ is open 
in $Z$ and closed in $\psi^{-1}(Z)$ and hence 
we can construct a morphism
\begin{align}
\Psi_Z \colon \psi^{-1}(G_Z) \longrightarrow G_Z
\end{align}
induced by $\Psi \colon \psi^{-1}G \longrightarrow G$. 
Since $\Vb_+$ is an expanding subbundle of $\Vb|_U$, we have 
\begin{align}
\varphi_M(G,\Psi)|_U
=\varphi(G_{\Vb_+}^{!-1},\Psi_{\Vb_+}^{!-1}). 
\end{align}
Moreover we have 
\begin{align}
CC(\varphi(G_{\Vb_+}^{!-1},\Psi_{\Vb_+}^{!-1}))
=LC(G_{\Vb_+}^{!-1},\Psi_{\Vb_+}^{!-1}),
\end{align}
by Proposition \ref{thm:5-8}.
Thus we have to show that 
\begin{align}
LC(G|_{\tl{U}},\Psi|_{\tl{U}})
=LC(G_{\Vb_+}^{!-1},\Psi_{\Vb_+}^{!-1})
\end{align}
as Lagrangian cycles in $T^*U$.
In what follows, for simplicity we write $M$ instead of $U$ and 
$\Vb|_U=\tl{U}, G|_{\tl{U}}, \Psi|_{\tl{U}}$ etc.\ by 
$\Vb, G, \Psi$ etc.\ respectively.

Let us take a $\mu$-stratification 
$\Vb= \bigsqcup_{\alpha \in A} \Vb_{\alpha}$ of $\Vb$ 
which satisfies the following three conditions.
\begin{enumerate}
\item There exists a subset $B \subset A$ such 
that the zero-section $M 
\subset \Vb$ of $\Vb$ is $\bigsqcup_{\beta \in B} 
\Vb_{\beta}$.
\item $\SS(G_Z) \subset \bigsqcup_{\alpha \in A} 
T^*_{\Vb_{\alpha}}\Vb$ in 
$T^*\Vb$.
\item $\SS(G_{\Vb_+}^{!-1}), \SS(R\tau_*G_Z) \subset 
\bigsqcup_{\beta \in B}T^*_{\Vb_{\beta}}M$ in 
$T^*M$.
\end{enumerate}
For $\beta \in B$, we shall denote 
$\Vb_{\beta} \subset M$ by $M_{\beta}$. 
Namely $M= \bigsqcup_{\beta \in B}M_{\beta}$ 
is a $\mu$-stratification of 
$M$. Set $\Lambda= \bigsqcup_{\beta \in B}
T^*_{M_{\beta}}M \subset T^*M$. By 
the conditions above, we obtain
\begin{align}
\supp(LC(G,\Psi)), \hspace{3mm}\supp(
LC(G_{\Vb_+}^{!-1},\Psi_{\Vb_+}^{!-1})) \subset \Lambda.
\end{align}
Therefore it suffices to show that $LC(G,\Psi)$ coincides with 
$LC(G_{\Vb_+}^{!-1},\Psi_{\Vb_+}^{!-1})$ on an 
open dense subset of $\Lambda$. Let $\Lambda_0$ be 
an open dense smooth subanalytic subset of 
$\Lambda$ whose decomposition 
$\Lambda_0= \bigsqcup_{i \in I}\Lambda_i$ into 
connected components 
satisfies the condition
\begin{align}
``\text{For any $i \in I$, there exists 
$\beta_i \in B$ such that $\Lambda_i 
\subset T_{M_{\beta_i}}^*M$.}"
\end{align}
Let us fix $\Lambda_i$ and $M_{\beta_i}$ 
as above and compare $LC(G,\Psi)$ 
with $LC(G_{\Vb_+}^{!-1},\Psi_{\Vb_+}^{!-1})$ on $\Lambda_i$.
 Take a point $p_0 \in \Lambda_i$ and 
set $x_0 =\pi_M(p_0) \in M_{\beta_i}$. Let $f \colon M 
\longrightarrow \Real$ be a real analytic function 
(defined in an open neighborhood of $x_0$) which satisfies 
that $p_0=(x_0;df(x_0)) \in \Lambda_i$, $f(x_0)=0$ and the 
Hessian ${\rm 
Hess}(f|_{M_{\beta_i}})$ is positive definite. Then 
by Corollary \ref{cor:5-5}, we have
\begin{align}
LC(G_{\Vb_+}^{!-1},\Psi_{\Vb_+}^{!-1})=m_i 
\cdot [T_{M_{\beta_i}}^*M]
\end{align}
in an open neighborhood of $\Lambda_i$ in 
$T^*M$, where $m_i \in \Comp$ is 
defined by
\begin{align}
m_i:= \dsum_{j \in \Z} (-1)^j \tr\{ H^j_{\{f \geq 
0\}}(B(x_0,\delta);G_{\Vb_+}^{!-1}) 
\overset{\Psi_{\Vb_+}^{!-1}}{\longrightarrow} 
H^j_{\{f \geq 0\}}(B(x_0,\delta);G_{\Vb_+}^{!-1})\}
\end{align}
for sufficiently small $\delta >0$. Set 
$V:=B(x_0,\delta)$ and $W:=V \cap \{ 
f<0\}$ in $M$. Then we have
\begin{align}\label{eq:6-16}
m_i=\tr(\RG_V(G_{\Vb_+}^{!-1})),\RG_V(\Psi_{\Vb_+}^{!-1})))
-\tr(\RG_W(G_{\Vb_+}^{!-1})),\RG_W(\Psi_{\Vb_+}^{!-1}))).
\end{align}
Set also $\tl{V}:=\tau^{-1}(V)$, $\tl{W}:=\tau^{-1}(W)\subset \Vb$ and 
$\tl{f}:=f \circ \tau \colon \Vb \longrightarrow \Real$. 
Since we work in a sufficiently small open neighborhood of $x_0$, 
we may assume $M=\Real^m, x_0=0$.
Set $g(x):=|x|^2=x_1^2+\dots+x_m^2$.

\begin{lem}\label{lem:6-7}
For a sufficiently small $\delta>0$, we have 
\begin{align}
\tr(\RG_V(G_{\Vb_+}^{!-1}),\RG_V(\Psi_{\Vb_+}^{!-1}))
&=
\tr(\RG_{\tl{V}}(G_Z),\RG_{\tl{V}}(\Psi_Z)), \label{eq:6-5}\\
\tr(\RG_W(G_{\Vb_+}^{!-1}),\RG_W(\Psi_{\Vb_+}^{!-1}))
&=
\tr(\RG_{\tl{W}}(G_Z),\RG_{\tl{W}}(\Psi_Z)).\label{eq:6-6}
\end{align}
\end{lem}

\begin{proof}
By the microlocal Bertini-Sard theorem 
(\cite[Proposition 8.3.12]{K-S}),  
there exist $\delta_0, \e_0>0$ such that we have 
\begin{align}
\Lambda \cap \Real_{\ge 0} \Lambda_g 
\cap \pi_M^{-1}(\{0 <|x| \le \delta_0 \}) 
\subset T^*_MM, \label{cd:6-3}\\
(\Lambda +T^*_{\{f=0\}}M) \cap \Real_{\ge 0} \Lambda_g 
\cap \pi_M^{-1}(\{0 <|x| \le \delta_0 \}) 
\subset T^*_MM, \label{cd:6-4}\\
\Lambda \cap \Real_{\ge 0} \Lambda_f 
\cap \pi_M^{-1}(\{-\e_0 \le f<0 \}) \subset T^*_MM. \label{cd:6-5}
\end{align}
Fix a constant $\delta$ such that $0<\delta<\delta_0$ and set 
$S:=\{x \in M \ | \ f(x)=0, |x|=\delta \}$.
For $x \in S$ consider covectors 
$\xi \in \Real_{\ge 0}\Lambda_f \cap T^*_xM, 
\eta \in \Real_{\ge 0}\Lambda_g \cap  T^*_xM$.
Assume that $\xi+\eta=\lambda \in \Lambda$.
Then we have 
$\lambda-\xi=\eta \in (\Lambda +T^*_{\{f=0\}}M) 
\cap \Real_{\ge 0}\Lambda_g$
and thus obtain $\eta=0$ by \eqref{cd:6-4}.
This implies that we have 
 $\xi=\lambda \in \Lambda \cap \Real_{\ge 0}\Lambda_f 
\cap T^*_xM$ and $\xi=\lambda=0$.
By this argument we obtain
\begin{align}
\Lambda \cap 
(\Real_{\ge 0}\Lambda_f+\Real_{\ge 0}\Lambda_g)
\cap T^*_xM
\subset \{0\}
\end{align}
for any $x \in S$.
By the compactness of $S$, there exists an open neighborhood $O$ 
of $S$ such that for any $x \in O$ we have 
\begin{align}
\SS(G_{\Vb_+}^{!-1}) \cap 
(\Real_{\ge 0}\Lambda_f+\Real_{\ge 0}\Lambda_g)
\cap T^*_xM
\subset \{0\}, \label{cd:6-6}\\
\SS(R\tau_*G_Z) \cap 
(\Real_{\ge 0}\Lambda_f+\Real_{\ge 0}\Lambda_g)
\cap T^*_xM
\subset \{0\}.\label{cd:6-7}
\end{align}

First, let us prove the equality \eqref{eq:6-5}. 
By the microlocal Morse lemma, we have
\begin{align}
\RG(B(x_0,\delta);G_{\Vb_+}^{!-1})
&\simeq 
\RG(\overline{B(x_0,\delta)};G_{\Vb_+}^{!-1}),\\
\RG(\tau^{-1}(B(x_0,\delta));G_Z) 
&\simeq
\RG(\tau^{-1}(\overline{B(x_0,\delta)});G_Z)
\end{align}
for $0<\delta<\delta_0$. Thus for $K:=\overline{B(x_0,\delta)}$ 
we obtain 
\begin{align}
\tr(\RG_V(G_{\Vb_+}^{!-1}),\RG_V(\Psi_{\Vb_+}^{!-1}))
&=\tr((G_{\Vb_+}^{!-1})_K,(\Psi_{\Vb_+}^{!-1})_K), \\
\tr(\RG_{\tl{V}}(G_Z),\RG_{\tl{V}}(\Psi_Z))
&=\tr((G_Z)_{\tl{K}},(\Psi_Z)_{\tl{K}}).
\end{align}
Moreover by Proposition \ref{prp:6-3} and the local invariance of 
characteristic classes, we have the equality
\begin{align}
\tr((G_{\Vb_+}^{!-1})_K,(\Psi_{\Vb_+}^{!-1})_K)
&=
\int_\Vb C(G_{\tl{K}},\Psi_{\tl{K}}) \\
&=
\int_\Vb C((G_Z)_{\tl{K}},(\Psi_Z)_{\tl{K}}) \\
&=
\tr((G_Z)_{\tl{K}},(\Psi_Z)_{\tl{K}}).
\end{align}
We thus obtain the equality \eqref{eq:6-5}.

Next let us prove the equality \eqref{eq:6-6}. 
For $\e >0$ we define an open subset $U_\e$ of $W$ by
\begin{align}
U_\e:=\{x \in W \ | \ {\rm dist}(x,M \setminus W)>\e\}.
\end{align}
For sufficiently small $\e>0$, 
outer conormal vectors of $\partial U_\e$ are contained in 
$\Real_{\ge 0}\Lambda_f+\Real_{\ge 0}\Lambda_g 
\cap \pi_M^{-1}(O)$. 
Thus by the conditions \eqref{cd:6-3}, \eqref{cd:6-5}, 
\eqref{cd:6-6} and \eqref{cd:6-7} 
we can apply the non-characteristic deformation lemma 
to $G_{\Vb_+}^{!-1}, R\tau_*G_Z$ and 
the family $\{U_\e\}_\e$
to obtain 
\begin{align}
\RG(W; G_{\Vb_+}^{!-1})
& \simto
\RG(\overline{U_\e};G_{\Vb_+}^{!-1}), \\
\RG(W;R\tau_*G_Z)
& \simto
\RG(\overline{U_\e};R\tau_*G_Z)
\end{align}
for sufficiently small $\e>0$.
Replacing $K$ by $\overline{U_\e}$ in 
the proof of \eqref{eq:6-5}, we 
obtain the equality \eqref{eq:6-6}. 
\qed
\end{proof}

Applying Lemma \ref{lem:6-7} and Theorem \ref{thm:5-1} to the pair 
$(\RG_{\tl{V}}(G_Z),\RG_{\tl{V}}(\Psi_Z))$, we obtain
\begin{align}\label{eq:6-23}
\tr(\RG_V(G_{\Vb_+}^{!-1}),\RG_V(\Psi_{\Vb_+}^{!-1}))
=\sharp ([\sigma_{f}] \cap 
LC(\RG_{\tl{V}}(G_Z), \RG_{\tl{V}}(\Psi_Z))).
\end{align}
Now define a real analytic function (defined on a neighborhood 
of $\tau^{-1}(x_0) \subset \Vb$) 
$\tl{g}\colon \Vb \longrightarrow \Real$ by $\tl{g}:=g \circ \tau$.
Then by the microlocal Bertini-Sard theorem, there exists 
$\delta_1>0$ such that 
\begin{align}
\SS(G_Z) \cap \Lambda_{\tl{g}} \cap 
\pi_{\Vb}^{-1}(\{ v \in \Vb \ | \ 0<|\tau(v)| \le \delta_1 \})
=\emptyset, \label{cd:6-8}
\end{align}
where $\pi_{\Vb} \colon T^*\Vb \longrightarrow \Vb$ is the projection.
Moreover by the proof of \cite[Theorem 9.5.6]{K-S},
there exists $\delta_2>0$ such that 
\begin{align}
 c\ge0, 0<|x|\le \delta_2, f(x)>0 \Longrightarrow (x;c \cdot dg(x)+df(x)) 
 \not\in \Lambda. \label{cd:6-9}
\end{align}
Replacing the constant $\delta$ by a smaller one, we may assume that 
$0<\delta<\min(\delta_0,\delta_1,\delta_2)$.
By the condition (i), \eqref{cd:6-8} and the 
definition of $\Lambda$ we have
\begin{eqnarray}
\supp(LC(\RG_{\tl{V}}(G_Z),\RG_{\tl{V}}(\Psi_Z)))
&\subset & \SS(\RG_{\tl{V}}(G_Z)) \cap \Lb_0 
\label{eq:6-28}\\
&\subset & \{ \SS(G_Z) \cup (\SS(G_Z) +
T^*_{\partial \tl{V}}\Vb)\} \cap \Lb_0 \\
&\subset & \Lambda \cup (\Lambda +
T^*_{\partial V}M)=:\Lambda^{\prime}.\label{eq:6-30}
\end{eqnarray}
Since $\Lambda^{\prime}$ is isotropic, 
by the microlocal Bertini-Sard 
theorem there exists sufficiently small 
$\e_1 >0$ such that
\begin{align}
\Lambda^{\prime} \cap \Lambda_{f} \cap \pi_M^{-1}(\{0 < | f |\le \e_1
\})=\emptyset. \label{cd:6-10}
\end{align}
Arguing as in the proof of \cite[Theorem 9.5.6]{K-S} 
by using the conditions \eqref{cd:6-4}, \eqref{cd:6-9} and 
\eqref{cd:6-10} and the estimate 
\eqref{eq:6-28}-\eqref{eq:6-30}, we obtain
\begin{align}
\Lambda_{f} \cap \supp(LC(\RG_{\tl{V}}(G_Z), \RG_{\tl{V}}(\Psi_Z))) 
\subset \pi_M^{-1}(\{f<-\e_1 \}) \sqcup \{p_0\}.
\end{align}
Hence from \eqref{eq:6-23} we deduce
\begin{align}\label{eq:6-38}
& \lefteqn{\tr(\RG_V(G_{\Vb_+}^{!-1}),
 \RG_V(\Psi_{\Vb_+}^{!-1}))}  \notag \\
&=\sharp \{\pi_M^{-1} (\{f <-\e_1 \}) \cap [\sigma_{f}] \cap 
LC(\RG_{\tl{V}}(G_Z), \RG_{\tl{V}}(\Psi_Z))\} +[\sigma_{f}] 
\underset{p_0}{\cdot} LC(G_Z,\Psi_Z),
\end{align}
where $[\sigma_{f}] \underset{p_0}{\cdot} 
LC(G_Z,\Psi_Z)$ is the local 
intersection number of $[\sigma_{f}]$ and 
$LC(G_Z,\Psi_Z)$ at $p_0 \in 
\Lambda_i$. 

The other term 
$\tr(\RG_W(G_{\Vb_+}^{!-1}),\RG_W(\Psi_{\Vb_+}^{!-1}))
=\tr(\RG_{\tl{W}}(G_Z),\RG_{\tl{W}}(\Psi_Z))$ 
can be calculated as follows. For $\e>0$, 
set $W_{\e}:=W \cap \{ f <-\e\}$ and 
$\tl{W_{\e}}:=\tl{W} \cap \{ \tl{f}<-\e\}=\tau^{-1}(W_{\e})$.

\begin{lem}\label{lem:6-3}
There exists sufficiently small $\e_2>0$ such that
\begin{align}
\tr(\RG_{\tl{W}}(G_Z),\RG_{\tl{W}}(\Psi_Z))
=\tr(\RG_{\tl{W_{\e}}}(G_Z),\RG_{\tl{W_{\e}}}(\Psi_Z))
\end{align}
for any $0<\e< \e_2$. 
\end{lem}

\begin{proof}
Set $\Sigma:=\SS(\RG_{\tl{V}}(G_Z)) \subset T^*\Vb$. 
Then by the microlocal 
Bertini-Sard theorem there exists $\e_2>0$ 
such that
\begin{align}
\Sigma \cap \Lambda_{\tl{f}} \cap \pi^{-1}(\{-\e_2 
\le \tl{f} <0\})=\emptyset.
\end{align}
Hence by the microlocal Morse lemma 
(\cite[Corollary 5.4.19]{K-S}), 
for $0<\e<\e_2$ we obtain
\begin{align}
\RG(\{\tl{f}<0\};\RG_{\tl{V}}(G_Z)) \simto 
\RG(\{\tl{f}<-\e\};\RG_{\tl{V}}(G_Z)).
\end{align}
\qed
\end{proof}

Let us continue the proof of Proposition \ref{prp:6-2}. 
By Lemma \ref{lem:6-7},
Lemma \ref{lem:6-3} and Theorem \ref{thm:5-1}, we obtain
\begin{align}\label{eq:6-27}
\tr(\RG_W(G_{\Vb_+}^{!-1}),\RG_W(\Psi_{\Vb_+}^{!-1}))
=\sharp ([\sigma_{f}] \cap 
LC(\RG_{\tl{W_{\e}}}(G_Z),\RG_{\tl{W_{\e}}}(\Psi_Z)))
\end{align}
for $0<\e<\e_2$. 
Moreover it follows from the condition (i) 
and the definition of $\Lambda$ that
\begin{eqnarray}
\supp(LC(\RG_{\tl{W_{\e}}}(G_Z),\RG_{\tl{W_{\e}}}(\Psi_Z)))
&\subset & \SS(\RG_{\{\tl{f}<-\e\}}(\RG_{\tl{V}}(G_Z))) \cap \Lb_0 \\
&\subset & \Lambda^{\prime} +\Real_{\leq 0}\Lambda_{f}.
\end{eqnarray}
Comparing this last estimate with \eqref{cd:6-10}, 
we obtain
\begin{align}
\Lambda_{f} \cap 
\supp(LC(\RG_{\tl{W_{\e}}}(G_Z),\RG_{\tl{W_{\e}}}(\Psi_Z))) \subset 
\pi_M^{-1}(\{f<-\e_1 \})
\end{align}
for $0<\e<\min(\e_1,\e_2)$. Since
\begin{align}
LC(\RG_{\tl{W_{\e}}}(G), 
\RG_{\tl{W_{\e}}}(\Psi))=LC(\RG_{\tl{V}}(G),\RG_{\tl{V}}(\Psi))
\end{align}
on $\pi_M^{-1}(\{f<-\e_1 \})$, from \eqref{eq:6-27} we obtain
\begin{align}\label{eq:6-36}
\lefteqn{\tr(\RG_W(G_{\Vb_+}^{!-1}),\RG_W(\Psi_{\Vb_+}^{!-1}))} \notag\\
&=\sharp\{\pi_M^{-1}(\{f <-\e_1 \})\cap [\sigma_{f}] 
\cap LC(\RG_{\tl{V}}(G_Z), \RG_{\tl{V}}(\Psi_Z))\}.
\end{align}
Putting \eqref{eq:6-38} and \eqref{eq:6-36} 
into \eqref{eq:6-16}, we finally 
obtain
\begin{align}
m_i =[\sigma_{f}] \underset{p_0}{\cdot} LC(G_Z,\Psi_Z),
\end{align}
which shows
\begin{align}
LC(G_Z,\Psi_Z)=LC(G_{\Vb_+}^{!-1},\Psi_{\Vb_+}^{!-1}) \label{eq:6-40}
\end{align}
on $\Lambda_i$. 
By the local invariance of Lefschetz cycles, we have
\begin{align}
LC(G_Z,\Psi_Z)
=LC(G,\Psi).\label{eq:6-41}
\end{align}
By combining \eqref{eq:6-40} with \eqref{eq:6-41} 
we obtain the assertion. 
\qed
\end{proof}

 Now we return to the situation at the 
beginning of this section. Namely for a fixed 
point component $M$ of $\phi$ 
assume the conditions $\supp (F) \cap M 
\subset M_{\reg}$ and  
\begin{equation}
``\text{$1\notin \Ev(\phi'_x)$ for any $x \in \supp(F) \cap M$.}"
\end{equation}

\begin{dfn}
We define a $\Comp$-valued constructible function 
$\theta (F,\Phi)_M \in \CF(M_{\reg})_{\Comp}$ 
on $M_{\reg}$ by 
\begin{equation}
\theta (F,\Phi)_M = \varphi_{M_{\reg}}
(\nu_{M_{\reg}}(F),\Phi'). 
\end{equation}
We call it the local trace function of 
$(F, \Phi )$ on the fixed point component $M$. 
\end{dfn}

By Propositions \ref{prp:6-1} and 
\ref{prp:6-2} we obtain the following explicit 
description of the Lefschetz cycle 
$LC(F,\Phi)_M$.  

\begin{thm}\label{thm:6-4}
We have the equality
\begin{equation}
LC(F,\Phi)_M=CC( \theta (F,\Phi)_M )
\end{equation}
as Lagrangian cycles in $T^*M_{\reg}$. 
In particular, if moreover 
$\supp(F) \cap M$ is compact we have
\begin{equation}
c(F,\Phi)_M =  \dint_{M_{\reg}} \theta (F,\Phi)_M. 
\end{equation}
\end{thm}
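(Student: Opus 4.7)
The plan is to assemble the theorem from the two main results already proved in the section, namely Proposition \ref{prp:6-1} (identification of the Lefschetz cycle with the Lefschetz cycle of the specialization) and Proposition \ref{prp:6-2} (the general computation of a Lefschetz cycle on a vector bundle as the characteristic cycle of a constructible function built out of hyperbolic localization). The role of the hypothesis ``$1 \notin \Ev(\phi^{\prime}_x)$ for $x \in \supp(F) \cap M_{\reg}$'' is precisely to guarantee the applicability of both propositions.

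First I would reduce to the situation described at the beginning of Section \ref{sec:6}: after replacing $X$ and $M$ by $X \setminus (M \setminus U)$ and $U$ respectively, where $U$ is the open neighborhood of $\supp(F) \cap M_{\reg}$ in $M_{\reg}$ on which $\Gamma_{\phi}$ meets $\Delta_X$ cleanly, we may assume that $M$ is smooth and $1 \notin \Ev(\phi^{\prime}_x)$ for every $x \in M$. Proposition \ref{prp:6-1} then gives the identification
\begin{equation}
LC(F,\Phi)_M = LC(\nu_M(F),\Phi^{\prime})
\end{equation}
of Lagrangian cycles under $\Lb \simeq \Lb^{\prime} \simeq T^*M$.

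Next I would apply Proposition \ref{prp:6-2} to the real vector bundle $\tau \colon \Vb = T_MX \longrightarrow M$, the endomorphism $\psi = \phi^{\prime}$, the conic $\Real$-constructible object $G = \nu_M(F) \in \Dc(\Vb)$, and the morphism $\Psi = \Phi^{\prime} \colon (\phi^{\prime})^{-1}\nu_M(F) \longrightarrow \nu_M(F)$. The assumption of Proposition \ref{prp:6-2} that the fixed point set of $\psi$ be the zero-section $M$ of $\Vb$ is exactly the condition $1 \notin \Ev(\phi^{\prime}_x)$ for all $x \in M$, which holds by our reduction. Consequently,
\begin{equation}
LC(\nu_M(F),\Phi^{\prime}) = CC(\varphi_M(\nu_M(F),\Phi^{\prime})) = CC(\theta(F,\Phi)_M),
\end{equation}
the last equality being the very definition of $\theta(F,\Phi)_M$. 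Combining this with the identification from Proposition \ref{prp:6-1} gives the first assertion of the theorem.

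For the second assertion, assume in addition that $\supp(F) \cap M$ is compact. Applying the microlocal index theorem (Theorem \ref{thm:5-1}) to the zero-section $\sigma = 0_{M_{\reg}} \colon M_{\reg} \longrightarrow T^*M_{\reg}$ yields
\begin{equation}
c(F,\Phi)_M = \sharp([0_{M_{\reg}}] \cap LC(F,\Phi)_M) = \sharp([0_{M_{\reg}}] \cap CC(\theta(F,\Phi)_M)),
\end{equation}
and a standard computation for characteristic cycles of subanalytically constructible functions (using $CC(\1_K) = [T^*_K M_{\reg}]$ from Proposition \ref{prp:2-10} and the fact that $\sharp([0_{M_{\reg}}] \cap [T^*_K M_{\reg}]) = \chi_c(K)$ for each stratum) identifies this intersection number with $\int_{M_{\reg}} \theta(F,\Phi)_M$ in the sense of Definition \ref{dfn:2-9}.

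Since the real content lies in Propositions \ref{prp:6-1} and \ref{prp:6-2}, which are already in hand, no new obstacle should arise here; the only point demanding some care is checking that the open-neighborhood reduction at the start is compatible with the definition of $c(F,\Phi)_M$ and with the gluing that defines $\theta(F,\Phi)_M$, but both are local near $\supp(F) \cap M_{\reg}$, so this is routine.
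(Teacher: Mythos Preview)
Your proposal is correct and matches the paper's own argument: the paper states Theorem \ref{thm:6-4} immediately after Propositions \ref{prp:6-1} and \ref{prp:6-2} as a direct consequence of combining them, exactly as you do. Your added justification of the integral formula via Theorem \ref{thm:5-1} with the zero-section is the natural way to spell out the ``In particular'' clause, which the paper leaves implicit.
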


\begin{cor}\label{cor:6-6}
Let $X$, $\phi$ and $M$ be as above and $F_1 
\overset{\alpha}{\longrightarrow} F_2 
\overset{\beta}{\longrightarrow} F_3 
\overset{\gamma}{\longrightarrow} F_1[1]$ a 
distinguished triangle in $\Dc(X)$. 
Assume that we are given a morphism of distinguished triangles
\begin{align}
\xymatrix@R=8mm@C=10mm{
\phi^{-1}F_1 \ar[r]^{\phi^{-1}\alpha} \ar[d]^{\Phi_1} & \phi^{-1}F_2 
\ar[r]^{\phi^{-1}\beta} \ar[d]^{\Phi_2} & \phi^{-1}F_3 
\ar[r]^{\phi^{-1}\gamma} \ar[d]^{\Phi_3} & \phi^{-1}F_1[1] 
\ar[d]^{\Phi_1[1]}\\
F_1 \ar[r]^{\alpha} & F_2 \ar[r]^{\beta} &F_3 \ar[r]^{\gamma} & F_1[1] }
\end{align}
in $\Dc(X)$. Then we have
\begin{align}
LC(F_2,\Phi_2)_M=LC(F_1,\Phi_1)_M +LC(F_3,\Phi_3)_M.
\end{align}
\end{cor}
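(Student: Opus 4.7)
The plan is to reduce the corollary to additivity of the local trace function $\theta(F,\Phi)_M$ via Theorem~\ref{thm:6-4}, which gives $LC(F_i,\Phi_i)_M = CC(\theta(F_i,\Phi_i)_M)$. Since the characteristic cycle map $CC\colon \CF(M_{\reg})_{\Comp} \simto \varGamma(T^*M_{\reg};\L_{M_{\reg}})$ of Proposition~\ref{prp:2-10} is a group isomorphism, it suffices to prove the pointwise equality
\begin{equation}
\theta(F_2,\Phi_2)_M(x) = \theta(F_1,\Phi_1)_M(x) + \theta(F_3,\Phi_3)_M(x)
\end{equation}
for every $x \in M_{\reg}$.

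To establish this pointwise identity, first I would apply the specialization functor $\nu_{M_{\reg}}$ to the given distinguished triangle and its morphism, using that $\nu_{M_{\reg}}$ is a triangulated functor. This yields a morphism of distinguished triangles
\begin{equation}
\nu_{M_{\reg}}(F_1) \to \nu_{M_{\reg}}(F_2) \to \nu_{M_{\reg}}(F_3) \to \nu_{M_{\reg}}(F_1)[1]
\end{equation}
in $\Dc(T_{M_{\reg}}X)$ equipped with compatible endomorphisms $\Phi_i'$ induced by $\Phi_i$. Next, for a point $x \in M_{\reg}$ I would choose a common expanding subbundle $\ME$ on a small neighborhood $U$ of $x$ (any such choice works since the resulting function $\theta$ is independent of $\ME$ by the discussion preceding Proposition~\ref{prp:6-2}). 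The hyperbolic localization functor $(-)_{\ME}^{!-1} = i_{\ME}^{!}((-)|_{\ME}) \simeq R{\tau_\ME}_!((-)|_{\ME})$ is a composition of triangulated functors, hence again produces a morphism of distinguished triangles in $\Dc(U)$ with compatible endomorphisms.

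Taking the stalk at $x$ and then the $j$-th cohomology gives, for each $j$, a long exact sequence of finite-dimensional $\Comp$-vector spaces equipped with compatible endomorphisms. The standard additivity of traces on short exact sequences of finite-dimensional vector spaces with compatible endomorphisms, together with the usual alternating-sum argument on the associated long exact sequence, yields
\begin{equation}
\sum_j (-1)^j \tr\bigl(H^j(\nu_M(F_2)_{\ME}^{!-1})_x\bigr) = \sum_j (-1)^j \tr\bigl(H^j(\nu_M(F_1)_{\ME}^{!-1})_x\bigr) + \sum_j (-1)^j \tr\bigl(H^j(\nu_M(F_3)_{\ME}^{!-1})_x\bigr),
\end{equation}
which is exactly the desired pointwise identity for $\theta$. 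Applying $CC$ to both sides and invoking Theorem~\ref{thm:6-4} concludes the proof.

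The only subtle point is the compatibility: one must check that the endomorphism $(\Phi_2')_{\ME}^{!-1}$ fits into a genuine morphism of distinguished triangles together with $(\Phi_1')_{\ME}^{!-1}$ and $(\Phi_3')_{\ME}^{!-1}$, so that the resulting squares in the long exact sequence of stalk cohomology commute and the elementary trace-additivity for short exact sequences applies. This compatibility is however automatic from the functoriality of $\nu_{M_{\reg}}$ and of $(-)_{\ME}^{!-1}$ applied to the morphism of distinguished triangles hypothesized in the corollary; no new analytic input is needed, in contrast to the proof of Proposition~\ref{prp:6-2}.
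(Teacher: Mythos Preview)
Your proposal is correct and is precisely the argument the paper has in mind: Corollary~\ref{cor:6-6} is stated without proof as an immediate consequence of Theorem~\ref{thm:6-4}, and your reduction to pointwise additivity of the local trace function $\theta(F,\Phi)_M$ via the triangulated functors $\nu_{M_{\reg}}$ and $(-)_{\ME}^{!-1}$, followed by the standard alternating-sum trace argument on the resulting long exact sequence of stalk cohomology, is exactly the intended justification.
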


In the complex case, we have the following 
stronger result. 

\begin{thm}\label{thm:6-5}
In the situation of Theorem \ref{thm:6-4}, 
assume moreover that $X$ and $\phi \colon X 
\longrightarrow X$ are complex analytic and $F \in \Db_c(X)$ i.e. $F$ is 
$\Comp$-constructible. Then we have
\begin{equation}
LC(F,\Phi)_M =LC(F|_{M_{\reg}},\Phi|_{M_{\reg}})
=CC(\varphi(F|_{M_{\reg}}, \Phi|_{M_{\reg}}))
\end{equation}
globally on $T^*M_{\reg}$.
\end{thm}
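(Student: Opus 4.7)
By Theorem~\ref{thm:6-4} the equality $LC(F,\Phi)_M = CC(\theta(F,\Phi)_M)$ holds in general, while Theorem~\ref{thm:5-8} applied with $\phi = \id_{M_{\reg}}$ gives $LC(F|_{M_{\reg}},\Phi|_{M_{\reg}}) = CC(\varphi(F|_{M_{\reg}},\Phi|_{M_{\reg}}))$ on $T^*M_{\reg}$. Hence it suffices to prove the pointwise identity $\theta(F,\Phi)_M(x) = \varphi(F|_{M_{\reg}},\Phi|_{M_{\reg}})(x)$ for every $x \in M_{\reg}$. The plan is to exploit the holomorphicity of $\phi$ to construct a real analytic deformation of $(\phi,\Phi)$ that rotates the eigenvalues of $\phi'$ off the half-line $\Real_{>1}$ without altering the Lefschetz cycle, so that after deformation the minimal expanding subbundle of Definition~\ref{dfn:6-1} becomes the zero-subbundle on a neighborhood of $\supp(F) \cap M_{\reg}$.

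Concretely, in a tubular neighborhood of $M_{\reg}$ choose holomorphic coordinates $(x',x'')$ adapted to $M_{\reg} = \{x'=0\}$, write $\phi$ in these coordinates, and for a smooth path $c \colon [0,1] \to \Comp^*$ with $c(0) = 1$ and $c(1) = e^{i\alpha}$ ($\alpha > 0$ small and generic) define a family $\phi_t$ of holomorphic maps by multiplying the normal component of the nonlinear part of $\phi$ by $c(t)$, so that $\phi_t'|_{T_{M_{\reg}}X} = c(t) \cdot \phi'|_{T_{M_{\reg}}X}$. By compactness of $\supp(F) \cap M_{\reg}$ and continuous dependence of eigenvalues on $x$, one can choose $\alpha$ so small that $c(t) \cdot \Ev(\phi'_x) \not\ni 1$ for all $t$ and all $x$ in this set (keeping $M$ as the fixed point set and preserving the non-degeneracy assumption) and so that $c(1) \cdot \Ev(\phi'_x) \cap \Real_{>1} = \emptyset$. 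I would then lift $\Phi$ to a morphism $\widetilde\Phi \colon \widetilde\phi^{-1}F \to p^{-1}F$ in $\Dc(X \times I)$, where $\widetilde\phi(x,t) = \phi_t(x)$, extending $\Phi$ at $t = 0$ and constructed so that on the fixed locus $M \times I$ (where $\widetilde\phi^{-1}F = p^{-1}F$) the restriction $\widetilde\Phi|_{M \times I}$ is the constant family $\Phi|_M$.

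For the hypothesis of Proposition~\ref{prp:4-9} I would observe that at each $x \in M$ the fiber of the Lefschetz bundle reads $\Lb_{t,x} = \{(\xi,-\xi) \in T^*_xX \oplus T^*_xX : (\phi_t')^*_x \xi = \xi\}$, and the assumption $1 \notin \Ev(\phi_t'_x)$ forces $\xi$ to lie in the annihilator of $T_{M_{\reg}}X_x$, a subspace of $T^*_xX$ that is independent of $t$. Thus $\Lb_t$ and $\SS(F) \cap \Lb_t$ are independent of $t$, and Proposition~\ref{prp:4-9} yields $LC(F,\Phi)_M = LC(F,\widetilde\Phi_1)_M$. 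At $t = 1$, the condition $c(1) \cdot \Ev(\phi'_x) \cap \Real_{>1} = \emptyset$ together with $1 \notin \Ev(\phi'_x)$ ensures that the contour of Definition~\ref{dfn:6-1} encloses no eigenvalue of $\phi_1'_x$, so the minimal expanding subbundle $\MW$ for $\phi_1'$ is the zero-subbundle on a neighborhood of $\supp(F) \cap M_{\reg}$. Choosing $\ME = \MW = 0$ in the definition of $\theta$, the hyperbolic localization of Definition~\ref{dfn:6-3} collapses to $\nu_{M_{\reg}}(F)_{\ME}^{!-1} \simeq i_{M_{\reg}}^{-1}\nu_{M_{\reg}}(F) \simeq F|_{M_{\reg}}$ together with induced endomorphism $\widetilde\Phi_1|_{M_{\reg}} = \Phi|_{M_{\reg}}$, so $\theta(F,\widetilde\Phi_1)_M = \varphi(F|_{M_{\reg}},\Phi|_{M_{\reg}})$. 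Combined with $LC(F,\Phi)_M = LC(F,\widetilde\Phi_1)_M$ and Theorem~\ref{thm:6-4} this completes the argument.

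The main obstacle will be the construction of the lift $\widetilde\Phi$ with the normalization $\widetilde\Phi_t|_M = \Phi|_M$ for all $t$: it has to be produced compatibly with the varying $\phi_t$, most naturally by passing to the specialization along $M \times I$ where the zero section is $\phi_t'$-invariant and the desired behavior on $M$ is automatic. Once this is in place, the independence of $\Lb_t$ is a short linear-algebra check and the reduction to $\MW = 0$ is immediate from Definitions~\ref{dfn:6-1} and~\ref{dfn:6-3}.
\qed
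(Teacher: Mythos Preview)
Your overall strategy—rotate the normal linearization by a unit complex number to push all eigenvalues off $\Real_{>1}$, then invoke homotopy invariance of Lefschetz cycles so that the expanding subbundle becomes trivial and $\theta$ collapses to $\varphi$—is exactly the mechanism the paper uses. The paper's proof reduces via Proposition~\ref{prp:6-1} to showing $LC(\nu_{M_{\reg}}(F),\Phi')=LC(F|_{M_{\reg}},\Phi|_{M_{\reg}})$, localizes using the sheaf property of Lagrangian cycles, and then appeals to the $\Comp^{\times}$-action on $\nu_{M_{\reg}}(F)$ following \cite[Corollary 9.6.16]{K-S}; that $\Comp^{\times}$-action is precisely your rotation $c(t)$.

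Where your write-up needs tightening is the level at which the deformation is performed. The family $\phi_t$ you build in a tubular neighborhood is only locally defined on $X$, and more seriously there is no reason a lift $\widetilde\Phi:\widetilde\phi^{-1}F\to p^{-1}F$ should exist for an arbitrary $\Comp$-constructible $F$ on $X$: nothing about $F$ is equivariant for this ad hoc family. Your own suggestion at the end—pass to the specialization first—is the correct fix and is what the paper does. After applying Proposition~\ref{prp:6-1} you work on the vector bundle $\Vb=T_{M_{\reg}}X$ with the conic object $G=\nu_{M_{\reg}}(F)$ and the \emph{linear} endomorphism $\phi'$, and the deformation $\phi'_t=c(t)\phi'$ is now globally defined on $\Vb$. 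The lift $\widetilde\Phi'_t$ then comes for free because $G$ is $\Comp^{\times}$-monodromic when $F$ is $\Comp$-constructible: one has a canonical isomorphism $m_{c(t)}^{-1}G\simeq G$ (with $m_{c(t)}$ the fiberwise scaling), hence $(\phi'_t)^{-1}G=(\phi')^{-1}m_{c(t)}^{-1}G\simeq(\phi')^{-1}G\xrightarrow{\Phi'}G$, and its restriction to the zero-section is automatically $\Phi'|_M=\Phi|_M$. This is the substantive point your sketch leaves implicit.

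Two smaller remarks. First, Proposition~\ref{prp:4-9} requires $\supp(G)\cap M$ compact, which is not assumed in Theorem~\ref{thm:6-5}; you must first localize near a point of $M_{\reg}$ (as the paper does) or cut off, since the equality of Lagrangian cycles is a sheaf condition. Second, when you write $c(t)\cdot\Ev(\phi'_x)$ be aware that $\Ev$ in the paper means the eigenvalues of the \emph{real} endomorphism on the complexified space, so for a $\Comp$-linear $\phi'_x$ the set $\Ev(c(t)\phi'_x)$ consists of $c(t)\lambda$ and $\overline{c(t)\lambda}$ for $\lambda$ ranging over the complex eigenvalues; the conclusion that a small generic rotation clears $\Real_{\ge 1}$ is still valid, but the bookkeeping is slightly different from what you wrote.
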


\begin{proof}
By Proposition \ref{prp:6-1}, we have only to prove
\begin{equation}\label{eq:6-43}
LC(\nu_{M_{\reg}}(F),\Phi^{\prime})
=LC(F|_{M_{\reg}},\Phi|_{M_{\reg}}).
\end{equation}
Since these cycles are considered as sections 
of the sheaf of $\L_{M_{\reg}}$ of 
Lagrangian cycles on $T^*M_{\reg}$, it suffices 
to prove \eqref{eq:6-43} locally. 
Namely, for each $x_0 \in M_{\reg}$ we have 
only to prove \eqref{eq:6-43} on an 
open neighborhood of $\pi_{M_{\reg}}^{-1}(x_0)$ 
in $\Lb \simeq T^*M_{\reg}$. This local 
statement can be proved along the same 
line as the proof of Proposition 
\ref{prp:6-2}. Since $\nu_{M_{\reg}}(F)$ 
admits the action of 
the multiplicative group 
$\Comp^{\times} = \Comp \setminus \{ 0 \}$ in the 
complex case, we may use the arguments 
in the proof of \cite[Corollary 
9.6.16]{K-S} for this purpose. This completes the proof.
\qed
\end{proof}

By this theorem we can drop the very technical condition 
on $\supp(F) \cap M$ in Theorem \ref{thm:3-5}. 

\smallskip 
We give a stalk formula of 
$\nu_{M_{\reg}}(F)_\ME^{!-1}$ which is useful to 
calculate the value of the constructible function 
$\theta (F,\Phi)_M=\varphi_{M_{\reg}}
(\nu_{M_{\reg}} (F),\Phi'))$.

\begin{prp}\label{prp:6-7}
In the situation of Theorem \ref{thm:6-4}, 
let $\ME$ be an expanding subbundle 
of $T_MX$ on a neighborhood of $x \in 
\supp (F) \cap M \subset M_{\reg}$. 
Then we have
\begin{align}
 H^k\left(
(\nu_{M_{\reg}} (F)_\ME^{!-1})
\right)_x
\simeq
\varinjlim_{B,Z}H^k_{B \cap Z}(B;F)
\end{align}
for any $k \in \Z$, 
where $B$ ranges through the family 
of open neighborhoods of $x$ in $X$ 
and $Z$ through that of closed subsets of $X$ such that
$C_{M_{\reg}} (Z) \cap 
(\ME_x \setminus \{x\})=\emptyset$.
\end{prp}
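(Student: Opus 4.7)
The plan is to combine three ingredients: the isomorphism $G_\ME^{!-1}\simeq R(\tau_\ME)_!(G|_\ME)$ of Definition \ref{dfn:6-3} applied to $G=\nu_{M_{\reg}}(F)$, proper base change on $\tau_\ME$ to reduce to a computation along the fibre $\ME_x$, and the standard Kashiwara--Schapira stalk formula for the specialization $\nu_{M_{\reg}}(F)$.

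First I would apply proper base change for $\tau_\ME\colon\ME\longrightarrow U$ along the inclusion $\{x\}\hookrightarrow U$ to obtain
\[
H^k(\nu_{M_{\reg}}(F)_\ME^{!-1})_x\simeq H^k_c\bigl(\ME_x;\,\nu_{M_{\reg}}(F)|_{\ME_x}\bigr).
\]
Since $\nu_{M_{\reg}}(F)$ is conic for the natural $\Real_{>0}$-action on $T_{M_{\reg}}X$, its restriction to the fibre $\ME_x$ is conic and $\Real$-constructible. The compactly supported cohomology of such a conic sheaf on $\ME_x$ can be rewritten, by excision and the $\Real_{>0}$-invariance, as an inductive limit of local cohomology groups of the form $H^k_{\tilde B\cap\tilde Z}(\tilde B;\,\nu_{M_{\reg}}(F))$, where $\tilde B$ ranges over open conic neighborhoods of the fibre $\ME_x$ in $T_{M_{\reg}}X$ and $\tilde Z$ through closed conic subsets of $T_{M_{\reg}}X$ meeting $\ME_x$ only at the origin $\{x\}$.

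Next I would pass from the normal bundle $T_{M_{\reg}}X$ back to $X$ via the normal deformation diagram \eqref{diag:2-1}, using the standard stalk description of the specialization functor (see the proof of \cite[Proposition 4.2.4]{K-S}). Open conic neighborhoods $\tilde B$ of $\ME_x$ arise cofinally from open neighborhoods $B$ of $x$ in $X$, and closed conic subsets $\tilde Z\subset T_{M_{\reg}}X$ satisfying $\tilde Z\cap\ME_x=\{x\}$ correspond cofinally to closed subsets $Z\subset X$ whose normal cone $C_{M_{\reg}}(Z)$ along $M_{\reg}$ avoids $\ME_x\setminus\{x\}$. Running the chain $F\longmapsto\tilde p^{-1}F\longmapsto Rj_*\tilde p^{-1}F\longmapsto s^{-1}Rj_*\tilde p^{-1}F=\nu_{M_{\reg}}(F)$ through these cofinal systems identifies each $H^k_{\tilde B\cap\tilde Z}(\tilde B;\,\nu_{M_{\reg}}(F))$ with $H^k_{B\cap Z}(B;F)$, and passing to the inductive limit yields the desired formula.

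The main obstacle will be the cofinality argument in the second step: the passage from the fibre-wise support condition $\tilde Z\cap\ME_x=\{x\}$ on the normal bundle to the intrinsic normal-cone condition $C_{M_{\reg}}(Z)\cap(\ME_x\setminus\{x\})=\emptyset$ on $X$. This matches the standard description in \cite[Chapter IV]{K-S} of the stalks of $\nu_M(F)$ in terms of subsets whose normal cone along $M$ avoids a prescribed direction, and is carried out by following the proof of \cite[Proposition 4.2.4]{K-S} and keeping track of supports.
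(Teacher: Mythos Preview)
Your overall plan is in the right neighborhood, but the bridge you build in the middle does not hold weight. The problematic step is your Step~2: you assert that $H^k_c(\ME_x;\nu_{M_{\reg}}(F)|_{\ME_x})$ can be rewritten ``by excision and $\Real_{>0}$-invariance'' as an inductive limit of $H^k_{\tilde B\cap\tilde Z}(\tilde B;\nu_{M_{\reg}}(F))$ on the total space $T_{M_{\reg}}X$, and then that each such term identifies with $H^k_{B\cap Z}(B;F)$. Neither claim survives scrutiny. The fibre $\ME_x$ is a \emph{closed} linear subspace of the normal bundle, not an open subset, so excision does not produce local cohomology on ambient open sets $\tilde B$; and your ``open conic neighborhoods $\tilde B$ of $\ME_x$'' coming from open $B\ni x$ in $X$ are sets of the form $\tau^{-1}(B\cap M)$, which shrink to the \emph{entire} fibre $(T_{M_{\reg}}X)_x$, not to $\ME_x$. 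There is also no termwise isomorphism $H^k_{\tilde B\cap\tilde Z}(\tilde B;\nu_{M_{\reg}}(F))\simeq H^k_{B\cap Z}(B;F)$; the standard stalk formulas for $\nu_M$ (\cite[Theorem 4.2.3]{K-S}) only give isomorphisms at the level of inductive limits, and only for ordinary or conically-supported stalks on $T_MX$, not for local cohomology restricted to a closed linear subfibre.

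What the paper does instead is the following: first pass to the $i_\ME^!$ description to get $H^k_{\{x\}}(\ME_x;\nu_M(F)|_{\ME_x})$, then construct by hand a comparison morphism from $\varinjlim_{B,Z}H^k_{B\cap Z}(B;F)$ into it via the normal deformation, and finally prove it is an isomorphism by the five-lemma applied to the long exact sequences in local cohomology on both sides. The two auxiliary columns are handled separately: $H^k(\ME_x;\nu_M(F)|_{\ME_x})\simeq H^k(F)_x$ by conicness, while $H^k(\ME_x\setminus\{x\};\nu_M(F)|_{\ME_x})$ is identified with an inductive limit over open sets via \cite[Theorem 4.2.3~(ii)]{K-S}, and then a genuine cofinality argument between the $(W,U')$-system there and the $(B,Z)$-system here finishes the job. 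Your cofinality instinct is correct, but it belongs on the \emph{open-complement} term $H^k(\ME_x\setminus\{x\};-)$, not on the support term; the support term is then forced by the five-lemma.
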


\begin{proof}
By the conicness of $\nu_M(F)$, we have
\begin{align}
 H^k\left(
(\nu_M(F)_\ME^{!-1})
\right)_x
& \simeq
H^k\left(
i_\ME^!(\nu_M(F)|_\ME)
\right)_x \\
& \simeq
H^k_{\{x\}}(\ME_x;\nu_M(F)|_{\ME_x}).
\end{align}
In what follows, we use the notation in \eqref{diag:2-1}.
Let $B$ and $Z$ be as in the statement.
Since $A:=C_M(Z)$ is a closed conic subset of $T_MX$,
as in the proof of \cite[Theorem 4.2.3 (iii)]{K-S}, we obtain the chain of morphisms:
\begin{align}
 \RG_{B \cap Z}(B;F)
& \longrightarrow
\RG_{p^{-1}(B \cap Z)}(p^{-1}(B);p^{-1}F) \\
& \longrightarrow
\RG_{p^{-1}(B \cap Z)\cap\Omega_X}(p^{-1}(B) \cap \Omega_X;p^{-1}F) \\
& \longrightarrow
\RG_{(p^{-1}(B \cap Z)\cap\Omega_X)\cup A}
(p^{-1}(B) \cap \Omega_X;Rj_*j^{-1}p^{-1}F) \\
& \longrightarrow
\RG_{\tau^{-1}(B \cap M) \cap A}(\tau^{-1}(B \cap M);\nu_M(F)).
\end{align}
Here we used the fact that $(p^{-1}(B \cap Z)\cap\Omega_X)\cup A$
is closed in $p^{-1}(B)$. 
Since $\ME_x \cap A \subset \{x\}$, the restriction morphism induces the one:
\begin{align}
\RG_{\tau^{-1}(B \cap M) \cap A}(\tau^{-1}(B \cap M);\nu_M(F))
\longrightarrow
\RG_{\{x\}}(\ME_x;\nu_M(F)|_{\ME_x}).
\end{align}
Combining the above morphisms, we obtain the morphism:
\begin{align}
\varinjlim_{B,Z}H^k_{B \cap Z}(B;F) \longrightarrow 
H^k_{\{x\}}(\ME_x;\nu_M(F)|_{\ME_x}). \label{mor:6-1}
\end{align}
Let us show that this is an isomorphism. 
The proof is similar to that of \cite[Theorem 4.2.3 (iii)]{K-S}.
Set $U=B \setminus Z$ and $V:=\tau^{-1}(B \cap M) \setminus C_M(Z)$.
Then $V$ is an conic open subset of $T_MX$ and satisfies 
$\overset{\circ}{\ME_x} \subset V$, 
where we set $\overset{\circ}{\ME_x}:=\ME_x \setminus \{x\}$.
Composing the morphism (\cite[Theorem 4.2.3 (ii)]{K-S})
\begin{align}
 \RG(U;F) \longrightarrow
\RG(V;\nu_M(F)),
\end{align}
and the restriction, we obtain the one 
\begin{align}
\RG(B \setminus Z;F) \longrightarrow
\RG(\overset{\circ}{\ME_x};\nu_M(F)|_{\ME_x}). \label{mor:6-2}
\end{align}
Now consider the following commutative diagram:
\[
\scalebox{0.95}{
\xymatrix{
\cdots \ar[r] &
\displaystyle\varinjlim_{B,Z}H^{k-1}(B\setminus Z;F) 
\ar[r] \ar[d]_{\gamma_{k-1}}&
\displaystyle\varinjlim_{B,Z}H^k_{B \cap Z}(B;F) 
\ar[r] \ar[d]_{\alpha_k}&
\displaystyle\varinjlim_{B}H^k(B;F) 
\ar[r] \ar[d]_{\beta_k} &
\cdots \\
\cdots \ar[r] &
H^{k-1}(\overset{\circ}{\ME_x};\nu_M(F)|_{\ME_x}) \ar[r] &
H^k_{\{x\}}(\ME_x;\nu_M(F)|_{\ME_x}) \ar[r] &
H^k(\ME_x;\nu_M(F)|_{\ME_x}) \ar[r] & \cdots.
}}
\]
Then all the rows are exact and all $\beta_k$'s are isomorphisms
since $H^k(\ME_x;\nu_M(F)|_{\ME_x}) \simeq H^k(F)_x$.
Thus it suffices to show that $\gamma_k$ is an isomorphism 
for any $k \in \Z$.
By \cite[Theorem 4.2.3 (ii)]{K-S}, we have
\begin{align}
H^{k}(\overset{\circ}{\ME_x};\nu_M(F)|_{\ME_x})
& \simeq
\varinjlim_{W}H^k(W;\nu_M(F)) \label{isom:6-1}\\
& \simeq
\varinjlim_{W,U'}H^k(U';F),\label{isom:6-2}
\end{align}
where $W$ ranges through conic open neighborhoods of $\overset{\circ}{\ME_x}$
in $T_MX$ and $U'$ ranges through open subsets of $X$ such that 
$C_M(X \setminus U') \cap W=\emptyset$.
For a pair $(B,Z)$ as in the statement, 
by taking $U=B \setminus Z$ as $U'$ and 
$V:=\tau^{-1}(B \cap M) \setminus C_M(Z)$ as $W$, we obtain the morphism
\begin{align}
\varinjlim_{B,Z}H^k(B \setminus Z;F)
\longrightarrow
\varinjlim_{W,U'}H^k(U';F). \label{mor:6-3}
\end{align}
Conversely, for any pair $(W,U')$ as above,
if we take any open neighborhood $B$ of $x$ in $X$ and set $Z=X \setminus U'$,
we have $C_M(Z) \cap \overset{\circ}{\ME_x}=\emptyset$ and 
$B \setminus Z \subset U'$.
Hence the morphism (\ref{mor:6-3}) is an isomorphism.
Since the composite of the morphism (\ref{mor:6-3})
and isomorphisms \eqref{isom:6-1}-\eqref{isom:6-2} is equal to $\gamma_k$,
we complete the proof. \qed
\end{proof}

\begin{rem}\label{rem:6-1}
Let $\tau \colon \Vb \longrightarrow M$ be a vector bundle and 
$\psi \colon \Vb \longrightarrow \Vb$ its endomorphism.
Similarly to Definitions \ref{dfn:6-1} and \ref{dfn:6-2}, 
we can define its minimal shrinking subbundle and 
shrinking subbundles (cf.\ \cite[Section 9.6]{K-S}). 
For a shrinking subbundle $\MS$ on an open subset $U \subset M$ 
and a conic $\Real$-constructible object $G \in \Dc(\Vb)$, we set 
\begin{align}
G_{\MS}^{-1!}:=i_\MS^{-1}j_\MS^!G \in \Dc(U),
\end{align}
where $i_\MS \colon U \longhookrightarrow \MS$ is the zero-section 
of $\MS$ and $j_\MS \colon \MS \longhookrightarrow \Vb$ is the 
inclusion map (cf.\ \cite{Braden}).
Moreover as in Definition \ref{dfn:6-3}, 
to a morphism $\Psi \colon \psi^{-1}G \longrightarrow G$ 
we can associate its endomorphism
$\Psi_\MS^{-1!} \colon G_{\MS}^{-1!} \longrightarrow G_{\MS}^{-1!}$.
Then we obtain a result similar to Proposition \ref{prp:6-3} 
for the pair $(G_\MS^{-1!},\Psi_\MS^{-1!})$ 
(cf.\ \cite[Proposition 9.6.14]{K-S}) 
and can define a constructible function 
$\varphi^{\rm s}_M(G,\Psi) \in CF(M)_{\Comp}$ 
globally defined on $M$ and associated to it. 
We can easily show that 
$\varphi^{\rm s}_M(G,\Psi) = \varphi_M(G,\Psi)$. 
Hence we can calculate the value of $\varphi_M(G,\Psi)$
also by shrinking subbundles. 
In fact the proof of the equality
\begin{align}
LC(G,\Psi)=CC(\varphi^{{\rm s}}_M(G,\Psi))
\end{align}
is much easier than that of Proposition \ref{prp:6-2}.
\end{rem}

\section{Some examples}\label{sec:7}

In this section, we introduce various examples to 
which our results in previous sections are applicable. 
First of all, we shall give a very simple proof to the 
following result in \cite[Corollary 6.5]{M-T-3}. 
Note that the original proof in \cite{M-T-3} relies on 
some deep results on the functorial properties of 
Lefschetz cycles (see \cite[Sections 5 and 6]{M-T-3} 
for the details). 
Let $M_i$ be a fixed point component of $\phi$ 
such that $\supp (F) \cap M_i \subset 
(M_i)_{\reg}$. For simplicity, 
we denote $(M_i)_{\reg}$ by $M$. 
Let $M= \sqcup_{\alpha \in A} M_{\alpha}$ be the 
decomposition of 
$M=(M_i)_{\reg}$ into its connected components 
and recall the notations in Introduction. 

\begin{thm}\label{cor:7-11} {\rm \bf(\cite[Corollary 6.5]{M-T-3})} 
In addition to the condition:
\begin{equation}
1 \notin \Ev(\phi^{\prime}_x) \hspace{5mm} 
\text{for any $x \in \supp(F) \cap M$}, 
\end{equation}
assume that the inclusion map $i_M \colon M 
\longhookrightarrow X$ is non-characteristic 
for $F$ and $\supp(F) \cap M$ is compact. Then we have 
\begin{equation}
c(F,\Phi)_M = \sum_{\alpha \in A} \ 
\sgn(\id-\phi^{\prime})_{M_{\alpha}} \cdot 
\dint_{M_{\alpha}} \varphi(F|_{M_{\alpha}},
 \Phi|_{M_{\alpha}}). 
\end{equation}
\end{thm}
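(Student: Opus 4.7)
\emph{Proof plan.} My plan is to invoke Theorem \ref{thm:6-4} and use the non-characteristic hypothesis to compute the local trace function $\theta(F,\Phi)_M = \varphi_{M_{\reg}}(\nu_{M_{\reg}}(F), \Phi')$ pointwise in a completely explicit way. By Theorem \ref{thm:6-4} one has $c(F,\Phi)_M = \dint_M \theta(F,\Phi)_M$, so after identifying $\theta(F,\Phi)_M$ on each connected component $M_\alpha$ the result follows by splitting the integral. The crucial first step is that under the non-characteristic condition $\SS(F) \cap T_M^*X \subset T_X^*X$, the specialization $\nu_M(F)$ is ``constant along the fibers'' of $\tau \colon T_M X \to M$: the natural morphism $\tau^{-1}(F|_M) \longrightarrow \nu_M(F)$ is an isomorphism (a standard consequence of the microlocal theory developed in \cite{K-S}), and the induced endomorphism $\Phi'$ corresponds to $\tau^{-1}(\Phi|_M)$.

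Next I would fix $x \in M$ and choose the minimal expanding subbundle $\MW$ of $\Vb = T_M X$ on a small neighborhood $U \ni x$ as in Definition \ref{dfn:6-1}; let $d$ denote its rank and write $\tau_\MW \colon \MW \to U$, $i_\MW \colon U \hookrightarrow \MW$ for the projection and the zero section. Combining $\nu_M(F)|_\MW \simeq \tau_\MW^{-1}(F|_U)$ with the projection formula $i_\MW^!(\tau_\MW^{-1}(-)) \simeq (-) \otimes i_\MW^! \Comp_\MW$ and the computation $i_\MW^! \Comp_\MW \simeq or_{\MW/M}[-d]$ for the zero section of a rank-$d$ real vector bundle, one obtains
\begin{equation*}
\nu_M(F)_\MW^{!-1} \simeq (F|_U) \otimes or_{\MW/M}[-d],
\end{equation*}
where $or_{\MW/M}$ is the orientation local system of the fibers of $\MW$. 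Under this identification $(\Phi')_\MW^{!-1}$ becomes $\Phi|_U$ tensored with the action of $\phi'|_\MW$ on $or_{\MW/M}$, which at $x$ is scalar multiplication by $\sgn(\det \phi'|_{\MW_x})$. Taking alternating traces of stalks therefore yields the pointwise identity
\begin{equation*}
\theta(F,\Phi)_M(x) = (-1)^d \cdot \sgn(\det \phi'|_{\MW_x}) \cdot \varphi(F|_M, \Phi|_M)(x).
\end{equation*}

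It remains to identify the prefactor with $\sgn(\id - \phi')_{M_\alpha}$ on the connected component $M_\alpha$ containing $x$. By Definition \ref{dfn:6-1} every eigenvalue of $\phi'|_{\MW_x}$ has real part strictly greater than $1$, so real eigenvalues contribute positively to the determinant and complex non-real ones occur in conjugate pairs contributing $|\lambda|^2 > 0$; hence $\sgn(\det \phi'|_{\MW_x}) = +1$. On the other hand $\det(\id - \phi'_x) = \prod_i (1 - \lambda_i)$ where $\lambda_i$ runs through the eigenvalues of $\phi'_x$, and only real $\lambda_i > 1$ produce negative factors, so $\sgn(\det(\id - \phi'_x)) = (-1)^{a(x)}$ where $a(x)$ is the number (with algebraic multiplicity) of real eigenvalues of $\phi'_x$ strictly greater than $1$. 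At the base point $\overset{\circ}{x} \in U$ used to construct $\MW$ one has $a(\overset{\circ}{x}) = d$, while $\sgn(\det(\id - \phi'_x))$ is locally constant on $M_\alpha$: the hypothesis $1 \notin \Ev(\phi'_x)$ prevents any eigenvalue from crossing $1$, and a coalescence of two real eigenvalues $> 1$ into a complex conjugate pair changes $a(x)$ by $\pm 2$. Hence $(-1)^d \sgn(\det \phi'|_{\MW_x}) = \sgn(\id - \phi')_{M_\alpha}$ throughout $M_\alpha$, and summing $\dint_{M_\alpha} \theta(F,\Phi)_M = \sgn(\id - \phi')_{M_\alpha} \cdot \dint_{M_\alpha} \varphi(F|_{M_\alpha}, \Phi|_{M_\alpha})$ over $\alpha \in A$ yields the claimed formula.

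The heavy analytic content has been absorbed into Theorem \ref{thm:6-4}, so within the present argument the main obstacle is the sign bookkeeping in the last paragraph: one must carefully track the interaction of the cohomological shift $[-d]$ with the $\phi'$-action on the orientation local system of $\MW$, and verify that the resulting sign continues to match $\sgn(\det(\id - \phi'))$ even at points of $M_\alpha$ where real eigenvalues have deformed into complex conjugate pairs.
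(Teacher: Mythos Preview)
Your proposal is correct and follows essentially the same route as the paper: reduce to Theorem~\ref{thm:6-4}, compute the hyperbolic localization along the minimal expanding subbundle $\MW$, and identify the resulting sign with $\sgn(\id-\phi')_{M_\alpha}$. The paper packages the non-characteristic step slightly differently---it proves (as Lemma~\ref{lem:7-2}) that $\nu_M(F)|_{\tau^{-1}(x)}$ is smooth via the Fourier--Sato relation $\nu_M(F)\simeq\mu_M(F)^\vee$ and the support estimate $\supp(\mu_M(F))\subset T^*_XX|_M$, then computes $\RG_c(\MW_x;\nu_M(F)|_{\MW_x})\simeq F_x[-d_x]$ directly---whereas you assert the global identification $\nu_M(F)\simeq\tau^{-1}(F|_M)$ and use the projection formula $i_\MW^!\tau_\MW^{-1}\simeq(-)\otimes or_{\MW/M}[-d]$; these are equivalent formulations of the same computation. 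Your sign discussion is more explicit than the paper's terse ``$\det\psi>0$'' and ``$(-1)^{d_x}=\sgn(\id-\phi')_M$'', but note that since the base point for constructing $\MW$ may be taken to be $x$ itself, one has $d=a(x)$ directly and the deformation argument for nearby points is not strictly needed.
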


\begin{proof}
First, we prove the following lemma.
\begin{lem}\label{lem:7-2}
In the situation of Theorem $\ref{cor:7-11}$, 
$\nu_M(F)|_{\tau^{-1}(x)}$ is smooth, i.e. 
its cohomology sheaves are (locally) constant 
for any $x \in M$.
\end{lem}
\begin{proof}
Set $G=\nu_M(F)$. It suffices to show that 
for any $x \in M$ and $p \in \tau^{-1}(x)$ the restriction morphism
\begin{align}
 \RG(\tau^{-1}(x);G) \longrightarrow G_p 
\end{align}
is an isomorphism. 
For $p=0$ it is trivial by the conicness of $G$. 
Assume that $p \neq 0$.
By the conicness of $G$ we have only to show that 
for any open convex cone
$V \subset T_MX$ containing $p$ and $U=\tau(V)$ the 
restriction morphism 
\begin{align}
 \RG(\tau^{-1}(U);G) \longrightarrow 
\RG(V;G) \label{mor:sprest}
\end{align}
is an isomorphism. 
Let $\pi : T^*_MX \longrightarrow M$ be 
the projection. Then by 
\cite[Proposition 3.7.12]{K-S} and 
the isomorphism 
$G \simeq \mu_M(F)^{\vee}$, the morphism 
(\ref{mor:sprest}) is identified with the one 
\begin{align}
 \RG_U(\pi^{-1}(U);\mu_M(F)) \longrightarrow 
\RG_{V^{\circ a}}(\pi^{-1}(U);\mu_M(F)) 
\label{mor:fourier}
\end{align}
up to some shift. 
Here we identify $U$ with the zero-section 
of $\pi^{-1}(U)$ and $a \colon T^*_MX \longrightarrow T^*_MX$ 
is the antipodal map. 
From the assumption that $i_M \colon M 
\longhookrightarrow X$ is non-characteristic for $F$, we obtain 
\begin{align}
 \supp(\mu_M(F)) \subset \SS(F) \cap T^*_MX 
\subset T^*_XX|_M.
\end{align}
Thus the morphism (\ref{mor:fourier}) is an isomorphism.\qed
\end{proof}
Let us continue the proof of the theorem. 
We may assume that $M=(M_i)_{\reg}$ is connected. 
It suffices to show that for any $x \in M$ we have 
\begin{align}
 \varphi_M(\nu_M(F),\Phi')(x)=
\sgn(\id-\phi^{\prime})_M \cdot 
\varphi(F|_M,\Phi|_M)(x). \label{eq:compcf}
\end{align}
We calculate the left hand side of \eqref{eq:compcf} 
by taking a minimal expanding subbundle 
$\MW \subset T_MX$ 
on a sufficiently small neighborhood of $x$ in $M$. 
Set $\psi:=\phi'|_{\MW_x} \colon \MW_x \longrightarrow \MW_x$ 
and $H=\nu_M(F)|_{\MW_x}$.
Then by Lemma \ref{lem:7-2}, $H$ is 
smooth on $\MW_x$ and hence we have 
\begin{align}
 \RG_c(\MW_x;H) \simeq H_x[-d_x] \simeq 
F_x[-d_x], \label{eq:const}
\end{align}
where we set $d_x=\dim \MW_x$. 
Moreover we define a morphism $\Psi 
\colon \psi^{-1}H \longrightarrow H$ by restricting 
$\Phi' \colon (\phi')^{-1}\nu_M(F) 
\longrightarrow \nu_M(F)$ to $\MW_x$. 
Then the left hand side of \eqref{eq:compcf} is equal to 
the trace of the composite of the morphisms 
\begin{align}
 \RG_c(\MW_x;H) \longrightarrow \RG_c(\MW_x;\psi^{-1}H) 
\overset{\Psi}{\longrightarrow} \RG_c(\MW_x;H).
\end{align}
By the isomorphism (\ref{eq:const}) and $\det\psi>0$, 
it is equal to 
the trace of the composite of the morphisms 
\begin{align}
 F_x[-d_x] \simeq (\phi^{-1}F)_x[-d_x] 
\overset{\Phi_x}{\longrightarrow} F_x[-d_x]. 
 \end{align}
Now the assertion follows immediately from 
the equality $(-1)^{d_x}=\sgn(\id-\phi^{\prime})_M$. 
\qed
\end{proof}

\begin{rem}
Theorem \ref{cor:7-11} is not true if we do not 
assume that $i_M \colon M 
\longhookrightarrow X$ is 
non-characteristic for $F$. See e.g. \cite[Example 
9.6.18]{K-S}.
\end{rem}

We have also a lot of examples as follows.


\begin{exm}
 Let $S^2=\{x=(x_1, x_2, x_3) \in \Real^{3} \ |
\ x_1^2+x_2^2 +x_{3}^2=1\}$ be the
 $2$-dimensional unit sphere in $\Real^{3}$
 and $S^1= \{ e^{i \theta} \ | \ 0 \leq \theta \leq
 2 \pi \}$ the $1$-dimensional one. Set
 $X=S^1 \times S^2$. For $e^{i \theta} \in S^1$
 we define a real analytic
 isomorphism $A_{\theta} \colon \Real^{3} \longrightarrow
\Real^{3}$ of $\Real^{3}$ by
\begin{equation}
 A_{\theta} (x)=\begin{pmatrix}
 2 \cos \theta & -2 \sin \theta & 0\\
2 \sin \theta & 2 \cos \theta & 0\\
0 & 0 & 1
\end{pmatrix}
\begin{pmatrix}x_1\\x_2\\x_3
\end{pmatrix}
\end{equation}
 and the one $\phi \colon X \longrightarrow
 X$ of $X$ by
\begin{equation}
\phi(e^{i \theta}, x)= \left(e^{i \theta},
\dfrac{A_{\theta}(x)}{\|A_{\theta}(x)\|}\right).
\end{equation}
 Then the fixed point set $M$ of $\phi$ is
 a submanifold of $X$ and consists of $3$
 connected components $M_1,M_2, M_3$
 defined by
\begin{equation}
 M_1=S^1 \times (0,0,1), \quad
 M_2=S^1 \times (0,0,-1), \quad
 M_3= \{ 1 \} \times (S^2 \cap \{ x_3=0 \})
\simeq S^{1}
\end{equation}
 respectively. Note that for
 $p=( e^{i \theta}, (0,0,1)) \in M_1$
 the set ${\rm Ev}( \phi^{\prime}_{p})$ of
 the eigenvalues of $\phi^{\prime}_{p} \colon
 (T_{M_1}X)_{p} \longrightarrow (T_{M_1}X)_{p}$
 is given by
 ${\rm Ev}( \phi^{\prime}_{p})= \{
2e^{i \theta}, 2e^{-i \theta} \}$.
In particular, it varies depending on
 the point $p \in M_1$ and satisfies the condition
\begin{equation}
 1 \notin {\rm Ev}( \phi^{\prime}_{p}) \quad
\text{for any} \ p \in M_1.
\end{equation}
 Let $\rho \colon S^2 \setminus
\{ (0,0,1), (0,0,-1) \} \longrightarrow S^1$
 be the natural surjective morphism
 and \\$I_1,I_2, \ldots, I_k \subset S^1$
 closed intervals. Assume that
 $I_1,I_2, \ldots, I_k$ are mutually disjoint 
 and 
\begin{equation}
\exp ( \frac{2 \pi i}{k} ) \cdot 
(I_1 \cup I_2 \cup \cdots \cup I_k)
=(I_1 \cup I_2 \cup \cdots \cup I_k). 
\end{equation}
We denote the closure of
 $\rho^{-1}(I_1 \cup I_2 \cup \cdots \cup I_k)$
 in $S^2$ by $K$. Let us set
\begin{equation}
 Y= \left\{ (e^{i \theta},x) \in 
X \ \Big| \ x_3 > \frac{1}{2} \right\},
\quad
 Z= \{ (e^{i \theta} , x) \in Y \ | \ 
(e^{i \theta})^k=1,  x \in K \}.
\end{equation}
 Then for the constructible sheaf
 $F=\Comp_{Y \setminus Z} \in \Dc(X)$ the inclusion map
 $i_{M_1} \colon M_1 \longhookrightarrow X$ is
 characteristic and there exists a
 natural morphism
 $\Phi : \phi^{-1}F \longrightarrow F$.
 By Theorem \ref{thm:6-4} we have
\begin{equation}
 c(F,\Phi) = c(F,\Phi)_{M_1}= k(k-1).
\end{equation}
 Moreover we can easily see that
\begin{equation}
 {\rm tr}(F,\Phi) = \chi_c (Y)- \chi_c (Z) =0-
k(1-k) =k(k-1).
\end{equation}
Here $\chi_c( \cdot )$ stands for the 
Euler characteristic with compact support. 
Similarly we can construct an example 
for which the set $\Ev(\phi^{\prime}_p)$ 
rotates on a small circle around the 
point $1 \in \Comp$. In this case, we cannot 
take an expanding subbundle of $T_{M_1}X$ 
globally on $M_1$. Such $M_1$ is not weakly 
hyperbolic in the sense of 
Goresky-MacPherson \cite{G-M-1} in general. 
\end{exm}


\begin{exm}
Let ${\rm Hom}_{\rm gr}(\Z^n, \Real \setminus \{0\})$ be the 
abelian group consisting of group homomorphisms of 
the lattice $\Z^n$ to the multiplicative group 
$\Real \setminus \{0\}$. Then the $n$-dimensional real 
algebraic torus $T=(\Real \setminus \{0\})^n$ 
can be naturally identified with 
${\rm Hom}_{\rm gr}(\Z^n,\Real \setminus \{0\})$ 
(see Fulton \cite{Fulton} etc.). 
Let $t \in T= {\rm Hom}_{\rm gr}(\Z^n, \Real \setminus \{0\})$ 
be an element of $T$ satisfying the condition: 
\begin{align}
& \text{There exists a rational linear subspace }
L \subset \Real^n \notag \\
& \text{such that }
\dim L \ge 1 \text{ and }
{\rm Ker}\, t=L \cap \Z^n.
\end{align}
Let $\Sigma$ be a complete smooth fan in $\Real^n$.
Assume that there is a cone $\sigma \in \Sigma$ such that
\begin{align}
\dim \sigma <n \text{ and } \sigma^{\perp} \subset \Real \otimes {\rm Ker}\,t.
\end{align}
Let $X:=X_{\Sigma}$ be the complete smooth real toric variety 
associated with $\Sigma$ and 
$\phi :=l_t \colon X \longrightarrow X$
the natural action of $t$ on it. 
Then the fixed point set $M$ of $\phi$ is explicitly given by
\begin{align}
M=
\bigsqcup_{\sigma^{\perp} \subset 
\Real \otimes {\rm Ker}\,t} T_\sigma,
\end{align}
where $T_{\sigma} \simeq (\Real \setminus \{0\})^{n- \dim \sigma}$ 
is the $T$-orbit associated with the cone $\sigma \in \Sigma$.
Define a partial order $\preceq$ on $\Sigma$ by 
\begin{align}
\sigma \preceq \tau \ \Longleftrightarrow \ 
\text{$\sigma$ is a face of $\tau$}.
\end{align}
Let $\sigma_1,\dots,\sigma_k$ be minimal elements of the set
$\{\sigma \in \Sigma \ | \ \sigma^{\perp} \subset 
\Real \otimes {\rm Ker}\,t\}$ with respect to the order $\preceq$.
Set $M_i:=\overline{T_{\sigma_i}} \subset X$.
Then $M_i$ is a connected submanifold of $X$ 
such that $\dim M_i=n-\dim \sigma_i \ge 0$. 
Moreover $M=\bigsqcup_{i=1}^{k}M_i$ is the decomposition 
of $M$ into connected components and we have 
$\dim M \geq 1$. Let $F \in \Dc(X)$ 
be an object satisfying the condition
\begin{align}
H^j(F|_{T_{\sigma}})
\text{ is a constant sheaf for any }
\sigma \in \Sigma \text{ and } 
j \in \Z. \label{cd:7-1}
\end{align}
and $\Phi \colon \phi^{-1}F \longrightarrow F$
a morphism in $\Dc(X)$.
Let us apply our fixed point formula 
to the pair $(F,\Phi)$. For 
the fixed point component $M_i=\overline{T_{\sigma_i}}$ 
associated with a minimal cone 
$\sigma_i$ we can compute the constructible function 
$\theta(F,\Phi)_{M_i}=\varphi_{M_i}(\nu_{M_i}(F),\Phi')$ 
as follows. Set $d:=\dim M_i$ and choose 
an $n$-dimensional cone $\tau_i \in \Sigma$ such that 
$\sigma_i \preceq \tau_i$. Then 
$U_{\tau_i}:={\rm Hom}_{\rm gr}(\tau_i^{\vee} \cap \Z^n, \Real) 
\simeq \Real^n$ is an affine open subset of $X$ 
containing the $T$-orbit $T_{\sigma_i}$. On 
$U_{\tau_i} \simeq \Real^n$ there exists a 
coordinate $(x_1,x_2, \ldots, x_n)$ such that 
$M_i \cap U_{\tau_i}=\{x \in \Real^n \ | 
\ x_{d+1}=\dots=x_n=0 \}$ and the map $\phi=l_t$ 
can be explicitly written as 
\begin{align}
(x_1,\dots,x_d,x_{d+1},\dots,x_n) \longmapsto 
(x_1,\dots,x_d,t_{d+1} x_{d+1},\dots,t_n x_n) 
\end{align}
for some $t_{d+1},\dots,t_n \neq 0,1$. 
Then we can identify $T_{M_i}(X)$ 
with $\Real^n$ on $U_{\tau_i}$.
Furthermore, by the condition \eqref{cd:7-1}, we can identify 
the pair $(\nu_{M_i}(F),\Phi')$ with $(F,\Phi)$.
Interchanging the coordinates, we may assume also that
$t_{d+1},\dots,t_{d+m}>1, t_{d+m+1},\dots,t_n<1$.
Then we can take the subbundle 
\begin{align}
\ME=
\{x \in \Real^n \ | \ x_{d+m+1}=\dots=x_n=0 \}
\simeq (M_i \cap U_{\tau_i}) \times \Real^m
\end{align}
as an expanding subbundle of $T_{M_i}(X)$ 
on $M_i \cap U_{\tau_i}$. 
Thus, the value of the function 
$\theta:=\theta(F,\Phi)_{M_i}$ 
at a point $x \in M_i \cap U_{\tau_i}$ is given by
\begin{align}
\theta(x)&=
\tr\{\RG_{\{x\}}(\ME_x;F|_{\ME_x})\} \\
&=\tr(F_x)-\tr\{\RG(\ME_x \setminus \{x\};F)\},
\end{align}
where we denote by $\tr\{\RG_{\{x\}}(\ME_x;F|_{\ME_x})\}$ etc.\
the trace of the induced endomorphism of 
$\RG_{\{x\}}(\ME_x;F|_{\ME_x})$ etc. 
Note that we have 
$\RG(\ME_x;F|_{\ME_x}) \simeq F_x$ 
by the conicness of $F|_{\ME_x}$.
We set $S(\ME_x):=(\ME_x \setminus \{x\})/\Real_{>0} 
\simeq S^{m-1}$ 
and identify it with the unit sphere of $\ME_x$. 
Let $\gamma \colon \ME_x \setminus \{x\} 
\longrightarrow S(\ME_x)$ 
the the natural map. 
Then by the conicness of $F|_{\ME_x}$, we have 
\begin{align}
\theta(x)=
\tr(F_x)-\tr(R\gamma_*(F|_{\ME_x \setminus \{ x \} }),
R\gamma_*(\Phi|_{\ME_x \setminus \{ x \} })),
\end{align}
where $R\gamma_*(\Phi|_{\ME_x \setminus \{ x \}})$ 
is a natural lift of the map 
$\gamma_*(\phi|_{\ME_x \setminus \{x\}}) \colon S(\ME_x) 
\longrightarrow S(\ME_x)$. 
In order to give a more explicit 
description of the value $\theta(x)$, 
assume the following condition:
\begin{align}
t_{d+1}\dots,t_{d+m} \text{ are distinct}.
\end{align}
We may assume that $t_{d+1}>\dots>t_{d+m}>1$ 
Denote by $\xi_1,\dots,\xi_m$ the corresponding 
coordinates of $\ME_x\simeq \Real^m$. 
Then the fixed point set of $\gamma_*(\phi|_{\ME_x \setminus \{x\}})$ is 
the intersection of the unit sphere with the $\xi_i$-axes. 
For $1 \leq i \leq m$ let us calculate 
the local contribution of the pair 
$(R\gamma_*(F|_{\ME_x \setminus \{ x \} }),
R\gamma_*(\Phi|_{\ME_x \setminus \{ x \} }))$ 
at $p=(0,\dots,0,\stackrel{\overset{i}{\smile}}{\pm 1},
0,\dots,0) \in S(\ME_x)$. 
The tangent space 
$V(p):=T_pS(\ME_x)$ can be identified 
with the $(m-1)$-dimensional 
linear subspace of $\Real^m$ spanned by 
$\xi_1 \dots \xi_{i-1},\xi_{i+1},\dots,\xi_m$ and realized in $\ME_x$ as  
\begin{align}
 V(p)=\{(\xi_1,\dots,\xi_{i-1},\pm 1,\xi_{i+1},\dots,\xi_m) \ | \ \xi_j \in 
 \Real \}.
\end{align}
Moreover by the condition \eqref{cd:7-1}, we can identify 
$\nu_p(R\gamma_*(F|_{\ME_x \setminus \{x\}}))$ with $F|_{V(p)}$.
Under these identifications, the map on $V(p)$ induced by 
$\gamma_* (\phi|_{\ME_x \setminus \{x\}})$ can be explicitly written as 
\begin{equation}
 (\xi_1,\dots,\xi_{i-1},\xi_{i+1},\dots,\xi_m) \longmapsto 
 (u_1\xi_1,\dots,u_{i-1}\xi_{i-1},u_{i+1}\xi_{i+1},\dots,u_m\xi_m) 
\end{equation}
for some $u_1>\dots>u_{i-1}>1>u_{i+1}>\dots>u_m>0$. 
Thus at the point $p$, we can 
take the $(i-1)$-dimensional subspace of $T_pS(\ME_x)$ 
spanned by $\xi_1 \dots \xi_{i-1}$ as an expanding subbundle (subspace)
and this is realized as
\begin{align}
 W(p)=\{(\xi_1,\dots,\xi_{i-1},\pm 1,0,\dots,0) \ | \ \xi_j \in \Real\}.
\end{align}
Therefore we have 
\begin{align}
c(R\gamma_*(F|_{\ME_x \setminus \{x\}}),R\gamma_*(\Phi|_{\ME_x \setminus \{x\}}))_p
&=\tr \{ \RG_{\{p\}}(W(p);F|_{W(p)}) \} \\
&=\tr(F_p)-
\tr \{ \RG(W(p) \setminus \{p\};F) \}. 
\end{align}
Repeating this argument, 
we can easily prove the following very simple formula: 
\begin{align}
\theta(x)=
\sum_{J \subset \{1,\dots,m\}}
\sum_{\stackrel{\e=(\e_j)_{j \in J}}{\e_j \in \{\pm 1\}}}
(-1)^{|J|} \tr\{\RG(T_{J,\e};F|_{T_{J,\e}})\},
\end{align}
where $|J|$ denotes the cardinality of $J$.
Here, for a multi-sign $\e=(\e_j)_{j \in J},\ \e_j \in \{\pm 1\}$ indexed by $J$, 
we set 
\begin{align}
T_{J,\e}=\{q \in \ME_x \simeq \Real^m \ | \
\e_j q_j >0 \ (j \in J),\
q_j=0 \ (j \in \{1,\dots,m\} \setminus J) \}.
\end{align}
\end{exm}


{\small
}

\end{document}